\newtheorem{theorem}{Theorem}[section]
\newtheorem{lemma}{Lemma}[section]
\newtheorem{proposition}{Proposition}[section]
\newtheorem{corollary}{Corollary}[section]
\numberwithin{equation}{section}
\newcommand{\blam}{\boldsymbol{\lambda}}
\newcommand{\C}{\mathbb{C}}
\newcommand{\bT}{\boldsymbol{T}}
\newcommand{\bone}{\boldsymbol{1}}
\newcommand{\hgt}{\mathrm{ht}}
\newcommand{\Ind}{\mathrm{Ind}}
\newcommand{\Res}{\mathrm{Res}}
\newcommand{\FF}{\mathbb{F}}  
\newcommand{\GL}{\mathrm{GL}}
\newcommand{\wt}{\mathrm{wt}}
\newcommand{\ch}{\mathrm{ch}}
\newcommand{\spn}{\text{-}\mathrm{span}}
\newcommand{\mcX}{\mathcal{X}}
\newcommand{\mcS}{\mathcal{S}}
\newcommand{\mcO}{\mathcal{O}}
\newcommand{\mcP}{\mathcal{P}}
\newcommand{\tlam}{\tilde{\lambda}}
\newcommand{\Irr}{\mathrm{Irr}}
\def\adots{\mathinner{\mkern2mu\raise0pt\hbox{.}  
\mkern2mu\raise4pt\hbox{.}\mkern1mu
\raise7pt\vbox{\kern7pt\hbox{.}}\mkern1mu}}
\begin{document}

\bibliographystyle{amsplain}

\title{Kostka multiplicity one for multipartitions}
\author{James Janopaul-Naylor and C. Ryan Vinroot}
\date{}

\maketitle

\begin{abstract}
If $[\lambda(j)]$ is a multipartition of the positive integer $n$ (a sequence of partitions with total size $n$), and $\mu$ is a partition of $n$, we study the number $K_{[\lambda(j)]\mu}$ of sequences of semistandard Young tableaux of shape $[\lambda(j)]$ and total weight $\mu$.  We show that the numbers $K_{[\lambda(j)] \mu}$ occur naturally as the multiplicities in certain permutation representations of wreath products.  The main result is a set of conditions on $[\lambda(j)]$ and $\mu$ which are equivalent to $K_{[\lambda(j)] \mu} = 1$, generalizing a theorem of Berenshte\u{\i}n and Zelevinski\u{\i}.  We also show that the questions of whether $K_{[\lambda(j)] \mu} > 0$ or $K_{[\lambda(j)] \mu} = 1$ can be answered in polynomial time, expanding on a result of Narayanan.  Finally, we give an application to multiplicities in the degenerate Gel'fand-Graev representations of the finite general linear group, and we show that the problem of determining whether a given irreducible representation of the finite general linear group appears with nonzero multiplicity in a given degenerate Gel'fand-Graev representation, with their partition parameters as input, is $NP$-complete.
\\
\\
\noindent 2010 {\it Mathematics Subject Classification: } 05A17, 05E10, 68Q17
\\
\\
{\it Key words and phrases: } Kostka number, Young tableau, multipartition, multiplicity one, permutation representation, wreath product, complexity
\end{abstract}

\section{Introduction} \label{intro}

Young tableaux and Kostka numbers, in their various forms, are of central importance in combinatorics and representation theory \cite{Sa90}.  In symmetric function theory, Kostka numbers appear in transition matrices between fundamental bases, and in representation theory they appear as multiplicity coefficients in numerous contexts.  An important question in both of these settings is whether a basic object appears with \emph{multiplicity one} in an expansion, since this uniqueness can be exploited to identify interesting properties of this object.  It is with this motivation that A. D. Berenshte\u{\i}n and A. V. Zelevinski\u{\i} \cite{BeZe90} solve the problem of, given a complex semisimple Lie algebra $\mathfrak{g}$ and an irreducible $\mathfrak{g}$-module $V_{\lambda}$ with highest weight $\lambda$, finding all weights $\omega$ such that the weight subspace $V_{\lambda}(\omega)$ of $V_{\lambda}$ has dimension $1$.  In the case that $\mathfrak{g}$ is of type $A$, the dimension of the weight subspace $V_{\lambda}(\omega)$ is exactly the Kostka number $K_{\lambda \omega}$, or the number of semistandard Young tableaux of shape $\lambda$ and content $\omega$ (which we will call the \emph{weight} of the tableau henceforth, and we will not refer to weights in the Lie algebra context again).  As an important corollary, Berenshte\u{\i}n and Zelevinski\u{\i} thus obtain conditions on partitions $\lambda$ and $\mu$ of a positive integer $n$ which are equivalent to the statement that $K_{\lambda \mu} = 1$.  The main purpose of this paper is to generalize this result in the case that we replace $\lambda$ with a multipartition $[\lambda(j)]$,  that is, a sequence of partitions with total size $n$, which we accomplish in Theorem \ref{MultiMultOne}.

The organization and results of this paper are as follows.  In Section \ref{prelims}, we give background material and the main definitions, starting with partitions and tableaux (Section \ref{parttab}), Kostka numbers and some classical results on symmetric functions and the symmetric group (Section \ref{Kostka}), and the definitions for multipartitions and multitableaux, including the basic results which parallel those for classical Kostka numbers (Section \ref{MultiDefs}).  In Section \ref{BZMult1}, we state the relevant result of Berenshte\u{\i}n and Zelevinski\u{\i} for Kostka numbers associated with partitions in Theorem \ref{BZ}.  In an effort to keep this paper as self-contained as possible, we give a complete proof of Theorem \ref{BZ} using tableau combinatorics.  There does not seem to be a combinatorial proof of this statement in the literature previously, and giving one here is useful to us in a few ways.  First, some of the ideas in the proof are used in the argument of the main result on multipartitions in Theorem \ref{MultiMultOne}, and second, through considering the details of the proof we are able to understand the computational complexity of answering the question of whether $K_{\lambda \mu} = 1$.  Narayanan \cite{Na06} showed that, given any partition $\lambda$ and composition $\omega$ of $n$, the question of whether $K_{\lambda \omega} >0$ can be answered in polynomial time.  We show in Corollary \ref{K1comp} that the same is true for the question of whether $K_{\lambda \omega} = 1$.

In Section \ref{wreath}, we turn to the representation theory of wreath products of finite groups to find motivation to study Kostka numbers associated with multipartitions.  The main result here is Theorem \ref{YoungWreath}, where we decompose wreath product permutation characters which are analogous to the symmetric group on a Young subgroup, and we find that multipartition Kostka numbers appear as multiplicities.  In Section \ref{Multi1}, we prove Theorem \ref{MultiMultOne}, in which we give conditions on a multipartition $[\lambda(j)]$ and a partition $\mu$ which are equivalent to the statement $K_{[\lambda(j)] \mu} = 1$.  We then show in Corollary \ref{Kmulticomp} that for a composition $\omega$, the questions of whether $K_{[\lambda(j)] \omega} > 0$ or $K_{[\lambda(j)] \omega} = 1$ can be answered in polynomial time.  Also, in Corollary \ref{J1Multi} we classify exactly which multipartitions $[\lambda(j)]$ are such that there exists a unique partition $\mu$ satisfying $K_{[\lambda(j)] \mu} = 1$, generalizing Corollary \ref{J1partitions} due to Z. Gates, B. Goldman, and the second-named author.

We lastly consider an application to the characters of the finite general linear group in Section \ref{GLnq}.  In particular, Zelevinsky decomposed the degenerate Gel'fand-Graev characters of the finite general linear group, with multiplicities being Kostka numbers associated with certain partition-valued functions.  We apply our main result Theorem \ref{MultiMultOne} to this setup to make some multiplicity one statements in Corollary \ref{ZelCor}.  Finally, in considering these Kostka numbers more generally, we show in Proposition \ref{NP} that the question of whether an irreducible character appears as a constituent of a degenerate Gel'fand-Graev character, with the partition data as input, is an $NP$-complete problem.  This somewhat surprising result seems to indicate new ideas are necessary in order to further understand these generalizations of Kostka numbers.
\\
\\
\noindent
{\bf Acknowledgements.  }  The authors thank Nick Loehr, Nat Thiem, Sami Assaf, Frans Schalekamp, Andreas Stathopoulos, and Arthur Gregory for helpful conversations.  The second-named author was supported in part by a grant from the Simons Foundation.

\section{Preliminaries} \label{prelims}

\subsection{Partitions and Tableaux} \label{parttab}

For any non-negative integer $n$, a \emph{partition} of $n$ is a finite sequence $\lambda = (\lambda_1, \lambda_2, \ldots, \lambda_l)$ of positive integers such that $\lambda_i \geq \lambda_{i+1}$ for $i < l$ and $\sum_{i=1}^l \lambda_i = n$, where $\lambda_i$ is the $i$th \emph{part} of $\lambda$.  The number of parts of $\lambda$ is the \emph{length} of $\lambda$, written $\ell(\lambda)$.  If $\lambda$ is a partition of $n$, we write $\lambda \vdash n$, and we say $\lambda$ has \emph{size} $n$, written $|\lambda| = n$.  If $n=0$, the only partition of $n$ is the \emph{empty partition}, written $(0)$.  We let $\mcP_n$ denote the set of all partitions of size $n$, and $\mcP$ the set of all partitions.  

We may represent a partition $\lambda = (\lambda_1, \lambda_2, \ldots, \lambda_l)$ by its \emph{Young diagram}, which has a row of $\lambda_i$ boxes for each part of $\lambda$, which are upper-left justified.  Notationally, we will often identify a partition $\lambda$ with its Young diagram.  For example, if $\lambda = (4, 3, 2, 2, 1)$, then $\lambda \vdash 12$ (so $|\lambda| = 12$), $\ell(\lambda) = 5$, and the Young diagram for $\lambda$ is
$$\yng(4,3,2,2,1).$$

Given $\lambda, \mu \in \mcP$, we say $\mu \subset \lambda$ if $\mu_i \leq \lambda_i$ for all $i$.  That is, $\mu \subset \lambda$ exactly when the Young diagram for $\mu$ fits inside of the Young diagram for $\lambda$.  If $\mu \subset \lambda$, we define $\lambda - \mu$ to be the result of removing the boxes of the Young diagram for $\mu$ from the Young diagram of $\lambda$, resulting in a \emph{skew diagram}.  For example, if $\lambda = (5, 4, 4, 1)$ and $\mu = (3, 2, 1)$, then $\mu \subset \lambda$, and the Young diagram for $\lambda$ with the Young diagram for $\mu$ inside of it marked by dots, and the skew diagram for $\lambda - \mu$, are
$$\young(\cdot\cdot\cdot\;\;,\cdot\cdot\;\;,\cdot\;\;\;,\;), \quad \young(:::\;\;,::\;\;,:\;\;\;,\;) \,.$$
The size of a skew diagram is defined to be the number of boxes it has, and above we have $|\lambda - \mu| = 8$.  A \emph{horizontal strip} is a skew diagram which has at most one box in each column, and a horizontal $m$-strip is a horizontal strip of size $m$.  For example, if $\lambda = (5,4,1)$ and $\mu = (4,2)$, then $\lambda - \mu$ is a horizontal $4$-strip with diagram
$$\young(::::\;,::\;\;,\;)\,.$$

There is a partial order on the set $\mcP_n$ called the \emph{dominance partial order} defined as follows.  Given two partitions $\lambda, \mu \in \mcP_n$, define $\lambda \unrhd \mu$ if for each $k \geq 1$, $\sum_{i \leq k} \lambda_i \geq \sum_{i \leq k} \mu_i$.  For example, if $\lambda = (3,2,1), \mu = (3,1,1,1)$, and $\nu = (2,2,2)$, then $\lambda \unrhd \mu$ and $\lambda \unrhd \nu$, but $\mu$ and $\nu$ are incomparable in the dominance partial order.

Given $\lambda \in \mcP$, a \emph{semistandard Young} (or \emph{column-strict})  \emph{tableau} of \emph{shape} $\lambda$ is a filling of the boxes of the Young diagram for $\lambda$ with positive integers, such that entries in each row weakly increase from left to right and entries in each column strictly increase from top to bottom.  If $T$ is a semistandard Young tableau of shape $\lambda$, the \emph{weight} of $T$, written $\wt(T)$, is the finite sequence $\omega = (\omega_1, \omega_2, \ldots, \omega_l)$, where $\omega_i$ is the number of times $i$ is an entry in $T$, and $l$ is the largest entry in $T$.  Note that $\sum_{i=1}^l \omega_i = |\lambda|$, so if $\lambda \vdash n$, then $\wt(T)$ is a \emph{composition} of $n$ (where we allow entries to be $0$, and the length $l$ is the position of the last positive part).  We will mainly concern ourselves with the case that the weight of a tableau is also a partition, so that $\omega_i \geq \omega_{i+1}$ for each $i$, and we will typically denote the weight by $\mu$ in this case.  An example of a semistandard Young tableau of shape $\lambda = (4,2,2,1)$ and weight $\mu = (3,3,2,1)$ is
$$\young(1112,22,33,4).$$
We may also describe a semistandard Young tableau of shape $\lambda$ and weight $\omega$ to be a sequence
$$(0) = \lambda^{(0)} \subset \lambda^{(1)} \subset \cdots \subset \lambda^{(l)} = \lambda$$
of nested partitions, such that each $\lambda^{(i)} - \lambda^{(i-1)}$ is a (possibly empty) horizontal strip, and $\omega_i = |\lambda^{(i)} - \lambda^{(i-1)}|$.  The idea is that the horizontal strip $\lambda^{(i)} - \lambda^{(i-1)}$ is exactly the set of boxes with entry $i >0$ in the corresponding semistandard Young tableau, and we take $\lambda^{(i)} = \lambda^{(i-1)}$ when there are no $i$ entries.

\subsection{Kostka numbers} \label{Kostka}

Let $\lambda \in \mcP_n$ and let $\omega = (\omega_1, \omega_2, \ldots, \omega_l)$ be a composition of $n$.  The number of semistandard Young tableau of shape $\lambda$ and weight $\omega$ is the \emph{Kostka number}, denoted $K_{\lambda \omega}$.  Kostka numbers play an important role in algebraic combinatorics and representation theory.  A crucial example is in symmetric function theory (see \cite[I.2]{Ma95} for an introduction).  For $m \geq 0$, let $h_m$ be the complete symmetric function, and if $\mu, \lambda \vdash n$, where $\mu = (\mu_1, \mu_2, \ldots, \mu_l)$, let $h_{\mu} = h_{\mu_1} \cdots h_{\mu_l}$ and let $s_{\lambda}$ denote the Schur symmetric function corresponding to $\lambda$.  Then the Kostka numbers make up the entries of the transition matrix between these two bases of symmetric functions, in that we have
\begin{equation} \label{KostkaSymm}
h_{\mu} = \sum_{\lambda \vdash n} K_{\lambda \mu} s_{\lambda}.
\end{equation}
The proof of \eqref{KostkaSymm} uses Pieri's formula \cite[I.5.16]{Ma95}, which states that for any $\lambda \in \mcP$ and any $m \geq 0$, 
$$h_m s_{\lambda} = \sum_{\nu} s_{\nu},$$
where the sum is taken over all partitions $\nu$ such that $\nu - \lambda$ is a horizontal $m$-strip.  Recalling that $h_n = s_{(n)}$, \eqref{KostkaSymm} is obtained by inductively applying Pieri's formula and observing that $s_{\lambda}$ appears in the expansion of $h_{\mu}$ in as many ways as we can build $\lambda$ by choosing horizontal strips of sizes $\mu_1$, $\mu_2$, and so on.  By the definition of semistandard Young tableaux through horizontal strips, this multiplicity is exactly $K_{\lambda \mu}$.

In the above argument, note that changing the order of the product $h_{\mu} = h_{\mu_1} \cdots h_{\mu_l}$ does not change the expansion, and the same Pieri rule argument may be applied no matter what the order of these factors, and in that case the $\mu$ in \eqref{KostkaSymm} is replaced by the composition resulting in permuting the parts of $\mu$ (or inserting $0$'s).  That is to say, it is a result of the proof of \eqref{KostkaSymm} that for a composition $\omega$ of $n$ and $\lambda \in \mcP_n$, the Kostka number $K_{\lambda \omega}$ is invariant under permutation of the parts of $\omega$, see also \cite[Section 4.3, Proposition 2]{Fu97}.  This is precisely why we may restrict our attention to the case of partitions being the weights of semistandard Young tableaux.  

It is also known that for $\lambda, \mu \in \mcP_n$, we have $K_{\lambda \mu} > 0$ if and only if $\lambda \unrhd \mu$.  The fact that $K_{\lambda \mu} > 0$ implies $\lambda \unrhd \mu$ follows from a quick argument, and we give the more general proof for multipartitions in Lemma \ref{KosSeqNonzero}.  The converse statement is more subtle.  We give a constructive proof here (coming out of a discussion with Nick Loehr) since it is not typically found in the literature, and we need the construction explicitly in Section \ref{BZMult1}.  Suppose $\lambda \unrhd \mu$, which implies $\ell(\lambda) \leq \ell(\mu)$, and let $\mu= (\mu_1, \ldots, \mu_l)$ with $\ell(\mu) = l$ and write $\lambda = (\lambda_1, \ldots, \lambda_l)$ with $\ell(\lambda) \leq l$ so that some $\lambda_i$ may be $0$.  We construct a semistandard Young tableau $T$ of shape $\lambda$ and weight $\mu$ as follows.  We fill horizontal strips greedily from the bottom of $\lambda$ to the top, by filling in the longest columns first, where rows are filled from right to left.  That is, we begin by filling in $\mu_l$ entries of $l$, starting from right to left in the last row of $\lambda$, and if that row is filled, we continue the horizontal strip on the next available row, and continue until using all $l$'s.  We then fill in the next horizontal strip up in the same way.  For example, if $\lambda = (12, 4, 2, 2)$ and $\mu = (5, 5, 5, 5)$, then the partial tableau looks like the following after filling in the $\mu_4 = 5$ entries of $4$:
$$\young(\;\;\;\;\;\;\;\;\;\;\;4,\;\;44,\;\;,44) \,.$$
Let $j$ be maximal such that $\lambda_j \geq \mu_l$, so that row $j$ is the upper-most row which contains an $l$ entry, and let $r_i$, $j \leq i \leq l$, be the number of $l$ entries which occur in row $i$ of the partial tableau after this step, so that $\sum_{i=j}^l r_i = \mu_l$.  In the above example, $j=1, r_1 = 1, r_2 = 2, r_3 = 0$, and $r_4 = 2$.  Let $\mu^* = (\mu_1, \ldots, \mu_{l-1})$, and define $\lambda^*$ to have parts $\lambda^*_i = \lambda_i$ if $i < j$, and $\lambda^*_i = \lambda_i - r_i$ if $i \geq j$.  To show that we can continue these steps to construct a semistandard Young tableau of shape $\lambda$ and weight $\mu$, then it is enough by induction on $n$ to prove that $\lambda^* \unrhd \mu^*$.  It is immediate that for $k < j$, we have $\sum_{i=1}^k \lambda^*_i \geq \sum_{i=1}^k \mu^*_i$, since $\lambda \unrhd \mu$.  In the case that $\mu_l \leq \lambda_l$, then $j = l$ and $r_l = \mu_l$, and $\lambda^* \unrhd \mu^*$ follows immediately.  So we assume now that $\mu_l - \lambda_l = d > 0$.  Consider $k$ such that $j \leq k < l$, and we must show $\sum_{i=1}^k \lambda_i^* \geq \sum_{i=1}^k \mu_i^*$, where $\mu_i^* = \mu_i$ when $i \leq k < l$.  Note that whenever $k < i < l$, then $\lambda_i < \mu_l$ since $i > j$, and $\mu_i \leq \mu_l$ since $\mu$ is a partition.  Thus $\lambda_i -\mu_i \leq 0$, while $\lambda_l - \mu_l = -d$, so that $\sum_{i = k+1}^l \lambda_i - \sum_{i=k+1}^l \mu_i \leq -d$.  Since $\sum_{i=1}^l \lambda_i - \sum_{i=1}^l \mu_i = 0$, then we obtain
\begin{equation} \label{tabsum}
\sum_{i = 1}^{k} \lambda_i - \sum_{i=1}^k \mu_i \geq d.
\end{equation}
Since $\lambda_l < \mu_l$, then we have $r_l = \lambda_l$, and so $\mu_l - r_l = \sum_{i=j}^{l-1} r_i = \mu_l - \lambda_l = d$.  So $\sum_{i=j}^k r_i \leq d$.  From this fact and \eqref{tabsum}, we now have
$$ \sum_{i=1}^k \lambda^*_i - \sum_{i=1}^k \mu^*_i = \sum_{i=1}^k \lambda_i - \sum_{i = 1}^k \mu_i - \sum_{i = j}^k r_i \geq d - d = 0.$$
Thus $\lambda^* \unrhd \mu^*$ as claimed, and we can construct a semistandard Young tableau of shape $\lambda$ and weight $\mu$ in this way whenever $\lambda \unrhd \mu$.

Now consider the symmetric group $S_n$ of permutations of $\{1, 2, \ldots, n\}$, and the irreducible complex representations of $S_n$.  These representations are parameterized by $\mcP_n$, and we denote the irreducible representation of $S_n$ parameterized by $\lambda \vdash n$ by $\pi^{\lambda}$, and its character by $\chi^{\lambda}$.  We adapt the convention of \cite[Lecture 4]{FuHa91} and \cite[I.8]{Ma95} that $\chi^{(n)} = \bone$, the trivial character of $S_n$, and $\chi^{(1,1,\ldots,1)}$ is the sign character of $S_n$.  Given $\mu = (\mu_1, \ldots, \mu_l) \vdash n$, let $S_{\mu}$ be the \emph{Young subgroup}
$$S_{\mu} = S_{\mu_1} \times S_{\mu_2} \times \cdots \times S_{\mu_l},$$
where $S_{\mu}$ is embedded in $S_n$ such that $S_{\mu_1}$ permutes $1$ through $\mu_1$, $S_{\mu_2}$ permutes $\mu_1 + 1$ through $\mu_2$, and so on.  If we consider the permutation representation of $S_n$ on $S_{\mu}$, we obtain the decomposition
\begin{equation} \label{KostkaPerm}
\Ind_{S_{\mu}}^{S_n}(\bone) = \bigoplus_{\lambda \unrhd \mu} K_{\lambda \mu} \pi^{\lambda},
\end{equation}
known as Young's rule, see \cite[Corollary 4.39]{FuHa91} for example.  For any partition $\mu$, one may see that $K_{\mu\mu} = 1$, so that $\pi^{\mu}$ appears with multiplicity 1 in the decomposition \eqref{KostkaPerm}. Two natural question arise.  Which other $\pi^{\lambda}$ occur with multiplicity 1 in the representation $\Ind_{S_{\mu}}^{S_n}(\bone)$, and when is it the case that $\pi^{\lambda}$ appears with multiplicity 1 in $\Ind_{S_{\mu}}^{S_n}(\bone)$ only for $\mu=\lambda$?  We give answers to these two questions in Section \ref{BZMult1}.

\subsection{Multipartitions and multitableaux} \label{MultiDefs}

If $n \geq 0$ and $r \geq 1$ are integers, then an \emph{$r$-multipartition} of $n$ is a sequence of $r$ partitions such that the sum of the sizes of the $r$ partitions is $n$.  We denote an $r$-multipartition of $n$ by $[\lambda(j)] = [\lambda(1), \lambda(2), \ldots, \lambda(r)]$, where $[\lambda(j)]$ has size $|[\lambda(j)]| = \sum_{j=1}^r |\lambda(j)| = n$.  Let $\mcP_n[r]$ denote the set of all $r$-multipartitions of $n$, and let $\mcP[r]$ denote the set of all $r$-multipartitions.  Given $[\lambda(j)] \in \mcP_n[r]$, we define the partition 
\begin{equation} \label{tildedef}
\tlam \in \mcP_n \quad \text{which has parts} \quad \tlam_i = \sum_{j=1}^r \lambda(j)_i,
\end{equation}
so that $\ell(\tlam) = \mathrm{max} \{ \ell(\lambda(j)) \, \mid \, 1 \leq j \leq r \}$.  One may also view $\tlam$ as the partition obtained by taking all columns of all $\lambda(j)$, and arranging them from longest to smallest.  For example, suppose $[\lambda(j)] \in \mcP_{12}[3]$, with $\lambda(1) = (2, 1, 1), \lambda(2) = (2, 2), \lambda(3) = (4)$.  Then $\tlam = (8, 3, 1)$, with Young diagrams
$$ \lambda(1) = \young(\;\;,\;,\;) \; , \quad \lambda(2) = \young(\;\;,\;\;)\; , \quad \lambda(3) = \young(\;\;\;\;)\; , \quad \text{and} \quad \tlam = \young(\;\;\;\;\;\;\;\;,\;\;\;,\;).$$
Given $[\lambda(j)] \in \mcP_n[r]$ and a composition $\omega$ of $n$, a \emph{semistandard Young $r$-multitableau} of \emph{shape} $[\lambda(j)]$ and \emph{weight} $\omega$ is a sequence $[T(j)]$ of $r$ semistandard Young tableaux, where each $T(j)$ has shape $\lambda(j)$, and if $\omega(j)$ is the weight of $\lambda(j)$, then $\omega$ has parts given by $\omega_i = \sum_{j=1}^r \omega(j)_i$.  As was the case with single tableau, we will primarily deal with $r$-multitableau which have total weight given by a partition $\mu$.  

For example, if $[\lambda(j)] \in \mcP_{12}[3]$ is the same as in the previous example, one semistandard Young $3$-multitableau of weight $\mu = (4,3,3,1,1)$ is given by 
$$ T(1) = \young(13,3,4)\; , \quad T(1) = \young(12,23)\;, \quad T(3) = \young(1125) \,.$$
Note that the weight $\mu$ of $[T(j)]$ is a partition, while none of the weights of the individual tableaux $T(j)$ are partitions.

Given $[\lambda(j)] \in \mcP_n[r]$ and any composition $\omega$ of $n$, we define the \emph{multipartition Kostka number} $K_{[\lambda(j)] \omega}$ to be the total number of semistandard Young $r$-multitableaux of shape $[\lambda(j)]$ and weight $\omega$.  The following relates the Kostka number for an $r$-multipartition to the Kostka number for the individual partitions.

\begin{proposition} \label{compdecomp} For any composition $\upsilon$ of length $l$ and size $n$, and any $[\lambda(j)] \in \mcP_n[r]$, we have
$$ K_{[\lambda(j)] \upsilon} = \sum_{\sum_j \omega(j) = \upsilon \atop{ |\omega(j)| = |\lambda(j)|}} \prod_{j=1}^r K_{\lambda(j) \omega(j)},$$
where the sum is over all possible ways to choose $r$ $l$-tuples $\omega(j)$, $1 \leq j \leq r$, of non-negative integers, where the sum of the $\omega(j)$ coordinate-wise is $\mu$, and the size of $\omega(j)$ taken as a composition is the same as the size of $\lambda(j)$.
\end{proposition}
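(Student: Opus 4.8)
The plan is to prove this by sorting the semistandard Young $r$-multitableaux of shape $[\lambda(j)]$ and weight $\upsilon$ according to the weights of their $r$ component tableaux, and then observing that each resulting fiber is counted by a single product of ordinary Kostka numbers.

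First I would unwind the definitions. Fix $[\lambda(j)] \in \mcP_n[r]$ and a composition $\upsilon = (\upsilon_1, \ldots, \upsilon_l)$ of $n$. By definition, a semistandard Young $r$-multitableau $[T(j)]$ of shape $[\lambda(j)]$ and weight $\upsilon$ is a sequence $T(1), \ldots, T(r)$ of semistandard Young tableaux with $T(j)$ of shape $\lambda(j)$, such that, writing $\omega(j)$ for the weight of $T(j)$ regarded as an $l$-tuple of non-negative integers (padding with trailing zeros as needed, which changes no Kostka number), one has $\sum_{j=1}^r \omega(j)_i = \upsilon_i$ for each $i$. Since $T(j)$ fills the $|\lambda(j)|$ boxes of the Young diagram of $\lambda(j)$, one automatically has $|\omega(j)| = |\lambda(j)|$ for every $j$. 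Thus the assignment $[T(j)] \mapsto (\omega(1), \ldots, \omega(r))$ maps the set of multitableaux in question into the finite set $\mcS$ of $r$-tuples of $l$-tuples of non-negative integers with $\sum_j \omega(j) = \upsilon$ coordinate-wise and $|\omega(j)| = |\lambda(j)|$ for each $j$ (finiteness is clear, since $0 \le \omega(j)_i \le \upsilon_i$).

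Next I would compute the fibers of this map. For a fixed element $(\omega(1), \ldots, \omega(r)) \in \mcS$, its preimage consists of exactly those sequences $[T(j)]$ in which $T(j)$ has shape $\lambda(j)$ and weight $\omega(j)$; since this is a condition imposed on each $T(j)$ separately and the choices are independent, the preimage has size $\prod_{j=1}^r K_{\lambda(j)\omega(j)}$ by the definition of the Kostka number of a partition against a composition. Summing these fiber sizes over all of $\mcS$ then yields $K_{[\lambda(j)]\upsilon} = \sum_{(\omega(1),\ldots,\omega(r)) \in \mcS} \prod_{j=1}^r K_{\lambda(j)\omega(j)}$, which is the claimed identity. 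This is a routine partitioning-and-counting argument, so I do not anticipate a genuine obstacle; the only points that require a little care are the bookkeeping reconciling the paper's convention that the weight of a single tableau is truncated at its last positive part with the need here to treat all the $\omega(j)$ as $l$-tuples (handled by padding with zeros), and the observation that the equality $|\omega(j)| = |\lambda(j)|$ is forced rather than an additional restriction, so that the index set of the sum is precisely the one described in the statement.
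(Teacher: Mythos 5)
Your proposal is correct and is essentially the paper's own argument: both count multitableaux by first fixing the weight $\omega(j)$ of each component tableau (subject to the coordinate-wise sum being $\upsilon$ and $|\omega(j)| = |\lambda(j)|$) and then multiplying the independent counts $K_{\lambda(j)\omega(j)}$. Your explicit fibering formulation and the zero-padding remark are just a more careful write-up of the same decomposition.
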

\begin{proof} We may count possible semistandard Young $r$-multitableaux of weight $\upsilon$ and shape $[\lambda(j)]$ as follows.  For each $j$, we choose the weight $\omega(j)$ of the tableau of shape $\lambda(j)$.  We must choose these $\omega(j)$ so that the coordinate-wise sum is exactly the weight $\mu$ for the whole $r$-multitableau.  Now $K_{\lambda(j) \omega(j)}$ is the total number of semistandard Young tableaux of shape $\lambda(j)$ and weight $\omega(j)$, so that once we fix the individual weights $\omega(j)$, the total number of $r$-multitableaux with this prescription is the product $\prod_{j=1}^r K_{\lambda(j) \omega(j)}$.  The formula follows.  
\end{proof}

Next we observe that, like in the case for partitions, the Kostka number for a multipartition is invariant under permuting the parts of the weight.  This is precisely why we may restrict our attention to the weight being a partition.

\begin{corollary} \label{PermInvar} For any $[\lambda(j)] \in \mcP_n[r]$, and any composition $\upsilon$ of size $n$, let $\mu$ be the unique partition obtained by permuting the parts of $\upsilon$.  Then $K_{[\lambda(j)] \upsilon} = K_{[\lambda(j)] \mu}$.
\end{corollary}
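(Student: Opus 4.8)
The plan is to reduce the claim to the classical fact that ordinary Kostka numbers $K_{\lambda\omega}$ are invariant under permuting the parts of $\omega$, which is stated (via the Pieri-rule argument) in Section~\ref{Kostka}. By Proposition~\ref{compdecomp}, we have
\[
K_{[\lambda(j)]\upsilon} = \sum_{\sum_j \omega(j) = \upsilon \atop |\omega(j)| = |\lambda(j)|} \prod_{j=1}^r K_{\lambda(j)\,\omega(j)},
\]
and the analogous formula holds with $\upsilon$ replaced by $\mu$. So it suffices to exhibit a bijection between the set of tuples $(\omega(1),\dots,\omega(r))$ summing coordinate-wise to $\upsilon$ (with $|\omega(j)| = |\lambda(j)|$) and the corresponding set summing to $\mu$, under which the product $\prod_j K_{\lambda(j)\,\omega(j)}$ is preserved term by term.

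First I would fix a permutation $\sigma$ of the coordinates (extending both $\upsilon$ and $\mu$ to a common length $l$, padding with zeros as needed) such that applying $\sigma$ to the parts of $\upsilon$ yields $\mu$; such a $\sigma$ exists precisely because $\mu$ is obtained from $\upsilon$ by rearranging parts. Then I would define the map on tuples by $\omega(j) \mapsto \sigma\!\cdot\!\omega(j)$, i.e.\ permuting the coordinates of each $\omega(j)$ by the same $\sigma$. This clearly sends a tuple summing to $\upsilon$ to one summing to $\mu = \sigma\!\cdot\!\upsilon$, preserves each $|\omega(j)|$, and is a bijection with inverse given by $\sigma^{-1}$. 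The key point is then that $K_{\lambda(j)\,\sigma\cdot\omega(j)} = K_{\lambda(j)\,\omega(j)}$ for each $j$, which is exactly the permutation-invariance of ordinary Kostka numbers recalled in Section~\ref{Kostka} (the $\omega(j)$ are compositions, so this applies verbatim). Hence each summand is preserved, the two sums are equal, and $K_{[\lambda(j)]\upsilon} = K_{[\lambda(j)]\mu}$.

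I do not anticipate a serious obstacle here; the only mild bookkeeping point is making sure the compositions $\upsilon$, $\mu$, and all the $\omega(j)$ are regarded as tuples of a fixed common length $l$ (allowing trailing or interior zeros) so that a single permutation $\sigma \in S_l$ acts uniformly on all of them, and so that the sum condition $\sum_j \omega(j) = \upsilon$ is an honest coordinate-wise equality of length-$l$ tuples. Once the indexing conventions are pinned down, the argument is a one-line transport of the classical result through Proposition~\ref{compdecomp}. Alternatively, one could give a direct bijective argument on multitableaux by invoking, for each $j$ separately, the weight-permutation bijection on semistandard tableaux of shape $\lambda(j)$ (e.g.\ Bender--Knuth involutions), but routing through Proposition~\ref{compdecomp} is cleaner and avoids re-proving anything.
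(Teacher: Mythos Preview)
Your proposal is correct and follows essentially the same approach as the paper: both use Proposition~\ref{compdecomp} to expand $K_{[\lambda(j)]\upsilon}$ and $K_{[\lambda(j)]\mu}$, then apply a single permutation $\sigma$ (sending $\upsilon$ to $\mu$) simultaneously to all the $\omega(j)$ to set up a bijection between the index sets, and conclude by invoking the permutation-invariance of ordinary Kostka numbers on each factor. Your write-up is somewhat more careful about the bookkeeping (padding to a common length $l$) than the paper's, but the argument is the same.
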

\begin{proof}  If $\upsilon$ has length $l$, let $\sigma \in S_l$ be the permutation which, when applied to the parts of $\upsilon$, gives $\mu$.  If we use Proposition \ref{compdecomp}, and we apply $\sigma$ to every $l$-tuple $\omega(j)$ in the sum when computing $K_{[\lambda(j)] \upsilon}$, then we obtain every $l$-tuple $\omega'(j)$ which would appear in the sum when computing $K_{[\lambda(j)] \mu}$.  Since Kostka numbers for partitions are invariant under permutation of the parts of the weight, then we always have $K_{\lambda(j) \omega(j)} = K_{\lambda(j) \omega'(j)}$ when $\omega'(j)$ is the result of permuting the parts of $\omega(j)$ by $\sigma$.  It follows that $K_{[\lambda(j)] \upsilon} = K_{[\lambda(j)] \mu}$.
\end{proof}

The following result gives a precise condition for when the Kostka number for an $r$-multipartition is nonzero, and reduces to the case of partitions.

\begin{lemma} \label{KosSeqNonzero} Suppose $[\lambda(j)] \in \mcP_n[r]$ and $\mu \in \mcP_n$.  Then $K_{[\lambda(j)] \mu} > 0$ if and only if $\tlam \unrhd \mu$.
\end{lemma}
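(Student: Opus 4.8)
The plan is to prove the two implications separately, and most of the work is in the ``if'' direction. For the forward direction ($K_{[\lambda(j)]\mu}>0 \Rightarrow \tlam \unrhd \mu$) I would fix a semistandard Young $r$-multitableau $[T(j)]$ of shape $[\lambda(j)]$ and weight $\mu$, with $\wt(T(j)) = \omega(j)$, so $\mu_i = \sum_j \omega(j)_i$. In any semistandard tableau the entries strictly increase down each column from a positive value, so the entry in row $i$ is at least $i$; hence every entry which is at most $k$ lies in one of the first $k$ rows. Thus for each $j$ and each $k\geq 1$, the number of entries of $T(j)$ that are at most $k$, namely $\sum_{i\leq k}\omega(j)_i$, is at most the number of boxes of $\lambda(j)$ in rows $1$ through $k$, namely $\sum_{i\leq k}\lambda(j)_i$. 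Summing over $j$ and using \eqref{tildedef} gives $\sum_{i\leq k}\mu_i \leq \sum_{i\leq k}\tlam_i$ for all $k$; since $|\mu|=|\tlam|=n$ this is exactly $\tlam\unrhd\mu$. This is the ``quick argument'' for partitions, promoted to multipartitions, and it is routine.

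For the converse, the plan is to reuse the explicit greedy construction of Section~\ref{Kostka} and then ``restrict'' it to the components. Since $\tlam\unrhd\mu$ with both partitions of $n$, that construction produces a semistandard Young tableau $S$ of shape $\tlam$ and weight $\mu$ (in particular $K_{\tlam\mu}>0$). The key structural observation is that, by the description of $\tlam$ in \eqref{tildedef} as the partition whose columns are all the columns of all the $\lambda(j)$ rearranged by decreasing length, the multiset of column lengths of $\tlam$ is the disjoint union over $j$ of those of the $\lambda(j)$. So one may label the columns of $S$ (equivalently of $\tlam$) by $1,\dots,\tlam_1$ from left to right and fix a partition $\{1,\dots,\tlam_1\}=A_1\sqcup\cdots\sqcup A_r$ with $\{\tlam'_t : t\in A_j\}$ equal, as a multiset, to the column lengths of $\lambda(j)$. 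For each $j$, I would then form $T(j)$ by taking the columns of $S$ indexed by $A_j$, together with their entries, and placing them left to right in increasing order of index.

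The claim to check — and essentially the only place care is needed — is that each $T(j)$ so built is semistandard of shape $\lambda(j)$ and that $[T(j)]$ has weight $\mu$. The shape is correct because selecting a sub-collection of the (weakly decreasing) column lengths of $\tlam$ and relisting it in decreasing order just recovers the column lengths of $\lambda(j)$, hence the conjugate of $\lambda(j)$; strict increase down columns is inherited from $S$; and weak increase along each row of $T(j)$ holds because that row is a subsequence, in the original order, of the corresponding (weakly increasing) row of $S$. Finally the multiset of entries of $[T(j)]$ equals that of $S$, so the weight is $\mu$, giving $K_{[\lambda(j)]\mu}>0$. I expect the mild subtlety to be exactly this verification — in particular, noticing that keeping the columns of $S$ in their original left-to-right order is what preserves weak increase along rows, whereas an arbitrary reordering of the selected columns (even one respecting lengths) could violate semistandardness. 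An alternative for the converse would be to induct on $r$ via Proposition~\ref{compdecomp}, distributing each part of $\mu$ among the components, but that appears to need an extra combinatorial lemma on the dominance order, so I would favor the column-restriction argument, which is self-contained and leans directly on Section~\ref{Kostka}.
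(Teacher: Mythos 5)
Your proposal is correct and follows essentially the same route as the paper: the forward direction is the same counting of entries at most $k$ in the first $k$ rows (the paper phrases it as a contrapositive), and the converse is the paper's construction of distributing the columns of a semistandard tableau of shape $\tlam$ and weight $\mu$ among the $T(j)$, preserving left-to-right order within each component. Your explicit verification that column lengths, column strictness, and row weak increase are preserved matches the paper's argument, just spelled out in more detail.
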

\begin{proof} First suppose that $\tlam$ does not dominate $\mu$ (that is, suppose $\tlam \unrhd \mu$ does not hold), but there does exist a semistandard Young $r$-multitableau $[T(j)]$ of shape $[\lambda(j)]$ and weight $\mu$.  Then for some $m \geq 1$, we have $\sum_{i \leq m} \tlam_i < \sum_{i \leq m} \mu_i$, which means
\begin{equation} \label{wrongway}
\sum_{j=1}^r \sum_{i \leq m} \lambda(j)_i < \sum_{i \leq m} \mu_i.
\end{equation}
Since the entries in each column of each $T(j)$ strictly increase, then all entries $1$ through $m$ must appear in the first $m$ rows of each $T(j)$.  However, the inequality \eqref{wrongway} says that there are more entries $1$ through $m$ than there are boxes in the first $m$ rows of $[T(j)]$, giving a contradiction.

Now assume $\tlam \unrhd \mu$.  Then $K_{\tlam \mu} >0$, and so there is a semistandard Young tableau $T$ of shape $\tlam$ and weight $\mu$.   Given such a $T$, we may construct a semistandard Young $r$-multitableau $[T(j)]$ of shape $[\lambda(j)]$ and weight $\mu$ as follows.  Each column of $\tlam$ is the same length of some column of some $\lambda(j)$, and we may take each column of $T$, and make it a column of some $T(j)$.  If some column in $T$ is to the left of another column in $T$, we need only make sure that if these columns are in the same $T(j)$, that the first is still to the left of the second.  This guarantees that each $T(j)$ is a semistandard Young tableau, since the entries of each column strictly increase going down, as they did in $T$, and the entries in each row of $T(j)$ weakly increase to the right, since each column is arranged in an order so that weak increasing is preserved from $T$.  Thus any $[T(j)]$ created in this way is a semistandard Young $r$-multitableau of shape $[\lambda(j)]$ and weight $\mu$, so that $K_{[\lambda(j)] \mu} > 0$.
\end{proof}

The construction of the $r$-multitableau $[T(j)]$ from the tableau $T$ in the end of the proof above will be used again, and so we give an example now.  Let $\mu = (7, 6, 4, 3, 1, 1)$, and define $[\lambda(j)]$ by $\lambda(1) = (4, 4, 3)$, $\lambda(2) = (3, 3, 1)$, and $\lambda(3) = (3, 1)$.  Then $\tlam = (10, 8, 4)$.  One semistandard Young tableau $T$ of shape $\tlam$ and weight $\mu$ is
$$ T = \young(1111111222,22233334,4456).$$
By choosing columns of $T$ to go in different positions of an $r$-multitableau, while still preserving the order of entries, we can construct (at least) two different semistandard $r$-multitableau, $[T(j)]$ and $[T'(j)]$, of shape $[\lambda(j)]$ and weight $\mu$, which we may define as
$$T(1) = \young(1112,2234,446) \;, \quad T(2)=\young(111,233,5) \;, \quad T(3) = \young(122,3) \; ,$$
and 
$$T'(1) = \young(1111,2233,456) \;, \quad T'(2)=\young(111,233,4) \;, \quad T'(3) = \young(222,4)\,.$$

\section{Multiplicity One for Partitions} \label{BZMult1}

Berenshte\u{\i}n and Zelevinski\u{\i} \cite{BeZe90} give precise conditions on partitions $\lambda, \mu$, which are equivalent to the statement that $K_{\lambda \mu} = 1$.  We note that there is a very slight typographical error in the subscripts of \cite[Theorem 1.5]{BeZe90} which we correct below.

\begin{theorem}[Berenshte\u{\i}n and Zelevinski\u{\i}] \label{BZ} Let $\lambda, \mu \in \mcP_n$, and suppose $\ell(\mu) = l$.  Then $K_{\lambda \mu} = 1$ if and only if there exists a choice of indices $0 = i_0 < i_1 < \cdots < i_t = l$ such that, for $k = 1,\ldots,t$, the partitions 
$$ \lambda^k = (\lambda_{i_{k-1} + 1}, \lambda_{i_{k-1} + 2}, \ldots, \lambda_{i_k}) \quad \text{and} \quad \mu^k = (\mu_{i_{k-1} + 1}, \mu_{i_{k-1} + 2}, \ldots, \mu_{i_k}),$$
where we define $\lambda_i = 0$ if $i > \ell(\lambda)$, satisfy the following:
\begin{enumerate}
\item[(1)] $\lambda^k \unrhd \mu^k$, and
\item[(2)] either $\lambda_{i_{k-1} + 1} = \lambda_{i_{k-1} + 2}= \cdots= \lambda_{i_k - 1}$ or $\lambda_{i_{k-1}+1} > \lambda_{i_{k-1} + 2}= \lambda_{i_{k-1} + 3}= \cdots= \lambda_{i_k}$.
\end{enumerate}
\end{theorem}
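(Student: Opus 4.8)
The plan is to prove both directions by induction, using the classical result $K_{\lambda\mu}>0 \iff \lambda \unrhd \mu$ together with a careful analysis of the ``greedy'' semistandard Young tableau construction given in Section~\ref{Kostka}. The key observation is that the conditions (1) and (2) describe a decomposition of $\lambda$ and $\mu$ into ``blocks'' $\lambda^k, \mu^k$ on consecutive rows $i_{k-1}+1,\ldots,i_k$, and I expect that $K_{\lambda\mu} = \prod_{k=1}^t K_{\lambda^k \mu^k}$ whenever condition (2) holds for each block, so that $K_{\lambda\mu}=1$ reduces to showing each $K_{\lambda^k\mu^k}=1$, and the latter should be equivalent to condition (1) $\lambda^k \unrhd \mu^k$ \emph{given} the rigidity imposed by (2). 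The heart of the argument is therefore a lemma of the form: if $\nu, \rho \vdash m$ with $\nu \unrhd \rho$ and $\nu$ has the shape described in (2) (either $\nu_1 = \cdots = \nu_{\ell-1}$ with $\nu_\ell$ arbitrary, or $\nu_1 > \nu_2 = \cdots = \nu_\ell$), then $K_{\nu\rho} = 1$; and conversely any ``local'' failure of (2) produces at least two distinct tableaux.

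For the ``if'' direction, I would first show that conditions (1)--(2) force a unique tableau. The idea: a semistandard Young tableau of shape $\lambda$ and weight $\mu$ is a chain $(0) = \lambda^{(0)} \subset \cdots \subset \lambda^{(l)} = \lambda$ of horizontal strips with $|\lambda^{(i)} - \lambda^{(i-1)}| = \mu_i$. Condition (1) on each block, summed up, gives $\lambda \unrhd \mu$, so at least one such chain exists. To see it is unique, I would argue that within a block satisfying (2), the entries are completely determined: when $\lambda_{i_{k-1}+1} = \cdots = \lambda_{i_k-1}$ (the ``near-rectangle'' case), the first $i_k - i_{k-1} - 1$ rows of the block are all equal in $\lambda$, and dominance within the block plus column-strictness forces each of these rows to be filled with a single repeated value; the greedy construction then has no freedom. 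The other case in (2) is handled symmetrically (it is, in a suitable sense, the conjugate/complementary situation). The crossing of block boundaries needs care: I must check that an entry equal to $i_k$ cannot ``spill'' into row $i_k + 1$, i.e.\ that the blocks genuinely decouple, which is exactly what the index choice $0 = i_0 < \cdots < i_t = l$ together with (1) guarantees (the partial sums of $\lambda$ and $\mu$ agree at each $i_k$).

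For the ``only if'' direction, I would prove the contrapositive: if \emph{no} valid choice of indices $i_0 < \cdots < i_t$ exists, then $K_{\lambda\mu} \geq 2$. Set $i_0 = 0$ and greedily take each $i_k$ to be as large as possible subject to $\lambda^k \unrhd \mu^k$; this is the unique ``maximal'' block decomposition satisfying (1). If this decomposition fails condition (2) in some block $k$ --- meaning there exist indices $p < q$ inside the block with $\lambda_p > \lambda_{p+1}$ and $\lambda_{q} > \lambda_{q+1}$ (with appropriate ranges), i.e.\ two distinct ``descents'' --- then I want to exhibit two distinct semistandard Young tableaux. The construction: build the tableau greedily from the bottom up as in Section~\ref{Kostka} to get one tableau $T$; then I would find a local move (swapping the placement of a box between two rows, exploiting the slack in the dominance inequalities that a second descent provides) that produces a genuinely different valid tableau $T'$. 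One must also handle the case where the greedy decomposition satisfies (2) everywhere but is not the only one --- however, since conditions (1)--(2) are what we are negating, it suffices to show the greedy decomposition is the unique candidate, so if it fails (2) we are done.

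The main obstacle, I expect, is the ``only if'' direction --- specifically, making the ``two descents force two tableaux'' argument precise. The subtlety is that a descent of $\lambda$ inside a block does not by itself create freedom; one needs to locate where the dominance inequality $\sum_{i\le k}\lambda^k_i \ge \sum_{i\le k}\mu^k_i$ is \emph{strict} and convert that strictness into a reroutable box in the horizontal-strip chain, then verify that rerouting preserves the horizontal-strip (column-strict) condition on \emph{both} the strip being modified and the next strip up. Keeping track of the interaction between the shape of $\lambda$ (where the descents are) and the weight $\mu$ (where the slack is) is the bookkeeping-heavy core of the proof. I would organize this by first reducing to a single block via the decoupling established in the ``if'' direction, then proving the single-block statement: for $\nu \unrhd \rho$ with $\nu$ \emph{not} of the form in (2), $K_{\nu\rho} \ge 2$, by an explicit pair of tableaux depending on the position of the first two descents of $\nu$.
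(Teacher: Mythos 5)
Your ``if'' direction follows the paper's route (block decoupling via the equalities $|\lambda^k|=|\mu^k|$, then a single-block uniqueness lemma, which is the paper's Lemma~\ref{ColLemma}), though your stated mechanism there is not quite right: in the case $\lambda_{i_{k-1}+1}=\cdots=\lambda_{i_k-1}>\lambda_{i_k}$ the rows of the unique tableau need \emph{not} be constant (e.g.\ shape $(3,3,1)$, weight $(3,2,2)$ has second row $2,2,3$), so uniqueness has to be argued as in Lemma~\ref{ColLemma} (each column of the near-rectangle is determined by its missing entry, and the missing entries are forced), not by ``each row is a single repeated value.''

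The genuine gap is in your ``only if'' direction. You take the \emph{maximal} greedy decomposition satisfying (1), assert it is ``the unique candidate,'' and reduce to the single-block claim that $\nu\unrhd\rho$ with $\nu$ not of the shape in (2) forces $K_{\nu\rho}\geq 2$. Both steps fail: take $\lambda=\mu=(3,2,1)$. Your maximal greedy choice gives the single block $\lambda^1=(3,2,1)=\mu^1$, which violates (2) (two descents), yet $K_{\lambda\mu}=1$; the valid choice of indices is the finer one $i_1=1,i_2=2,i_3=3$. The same example refutes your single-block lemma as stated. The correct candidate is the \emph{finest} decomposition, splitting at every index where the partial sums of $\lambda$ and $\mu$ agree (this is what the algorithm in Corollary~\ref{K1comp} uses): refining a decomposition that satisfies (1) and (2) preserves (1) and (2), so indices exist if and only if the finest decomposition works, and the single-block statement you need carries the extra hypothesis that the dominance inequalities are \emph{strict} at all intermediate indices. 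Even then, the ``two descents plus strictness gives two tableaux'' construction is the hard part and is left unproved in your sketch; the paper avoids it by instead running an induction on $n$ (using Lemma~\ref{Rowilastrow} to strip the first column, the last row, or the entries beyond row $h$, in four cases), with an explicit two-tableaux swap appearing only in its final case. So as written the argument does not establish the ``only if'' direction; it would need the finest-decomposition reduction, the corrected single-block statement, and a worked-out exchange argument.
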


As our main result in Section \ref{Multi1} is a generalization of and heavily dependent on Theorem \ref{BZ}, we give a tableau-theoretic proof of Theorem \ref{BZ} now.  We begin with the following special cases.

\begin{lemma} \label{ColLemma} Let $\lambda, \mu \in \mcP_n$ with $\ell(\mu) = l$, $\ell(\lambda) \leq l$, $\lambda \unrhd \mu$, and $\lambda = (\lambda_1, \lambda_2, \ldots, \lambda_l)$, with $\lambda_i = 0$ if $i > \ell(\lambda)$.  Suppose that either:
\begin{enumerate}
\item[(1)] $\lambda_1 = \lambda_2 = \cdots = \lambda_l$, 
\item[(2)] $\lambda_1 > \lambda_2 = \lambda_3 = \cdots = \lambda_l$, or
\item[(3)] $\lambda_1 = \lambda_2 = \cdots = \lambda_{l-1} > \lambda_l$.
\end{enumerate}
Then $K_{\lambda \mu} = 1$.  In case (1), or in case (2) when $\lambda_1 - \lambda_2 = 1$, or in case (3) when $\lambda_{l-1} - \lambda_l = 1$, we must also have $\mu = \lambda$.
\end{lemma}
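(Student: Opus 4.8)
The plan is to prove Lemma~\ref{ColLemma} by a direct combinatorial analysis of semistandard Young tableaux $T$ of shape $\lambda$ and weight $\mu$, exploiting the very rigid column structure in each of the three cases. The key observation in every case is that when $\lambda$ is (nearly) a rectangle, the placement of entries in a semistandard filling is almost entirely forced: each column must contain $\ell(\mu)=l$ or $l-1$ strictly increasing entries drawn from $\{1,\dots,l\}$, so there is little room for choice.

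First I would handle case (1), where $\lambda=(c,c,\dots,c)$ is an $l\times c$ rectangle. Here each of the $l$ rows has length $c$ and each column has length $l$, so every column must be exactly $(1,2,3,\dots,l)^{\mathsf t}$; there is no freedom whatsoever, so $T$ is unique, $K_{\lambda\mu}=1$, and moreover every row $i$ consists entirely of $i$'s, forcing $\mu_i=c=\lambda_i$, i.e.\ $\mu=\lambda$. This also disposes of the ``$\mu=\lambda$'' addendum in case (1) (where no smallness hypothesis is needed). For case (2), $\lambda_1>\lambda_2=\cdots=\lambda_l=c$: the columns in positions $1,\dots,c$ have length $l$ and so are again forced to be $(1,2,\dots,l)^{\mathsf t}$, while the remaining $\lambda_1-c$ cells lie entirely in row $1$ and must all be filled with $1$. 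Thus again $T$ is forced and $K_{\lambda\mu}=1$; counting entries gives $\mu_1=\lambda_1$ and $\mu_i=c=\lambda_i$ for $i\ge 2$, so in fact $\mu=\lambda$ in \emph{all} of case (2), a fortiori when $\lambda_1-\lambda_2=1$. Case (3), $\lambda_1=\cdots=\lambda_{l-1}=c>\lambda_l$, is the only genuinely nontrivial one: the first $\lambda_l$ columns have length $l$ and are forced to be $(1,\dots,l)^{\mathsf t}$, so all the $l$'s are used up there; in the remaining $c-\lambda_l$ columns, each of length $l-1$, the entries form a strictly increasing sequence from $\{1,\dots,l-1\}$, hence miss exactly one value. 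Using $\lambda\unrhd\mu$ together with $|\lambda|=|\mu|$ and the weight bookkeeping $\mu_i=$ (number of $i$'s in $T$), I would show that the multiset of ``missing values'' across these columns is determined, and then that semistandardness (weakly increasing rows) forces the missing value in each successive column to be weakly increasing as one moves right, pinning down $T$ uniquely; hence $K_{\lambda\mu}=1$.

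For the $\mu=\lambda$ addendum in case (3) under the hypothesis $\lambda_{l-1}-\lambda_l=1$: now there is exactly one short column, of length $l-1$, missing a single value $m\in\{1,\dots,l-1\}$. I would argue that $\lambda\unrhd\mu$ plus the equality $\lambda_{l-1}=\lambda_l+1$ forces $m=l-1$ (any smaller choice of $m$ would make $\sum_{i\le m}\mu_i<\sum_{i\le m}\lambda_i$ impossible to compensate given the shape), whence $\mu_i=\lambda_i$ for all $i<l-1$, $\mu_{l-1}=c-1=\lambda_{l-1}$, and $\mu_l=\lambda_l$, i.e.\ $\mu=\lambda$.

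The step I expect to be the main obstacle is case (3) in the generality where $c-\lambda_l$ may be large: one must carefully combine the constraint $\lambda\unrhd\mu$ with the column-missing-value description to show both that the relevant data is uniquely determined by $\mu$ and that a consistent semistandard filling actually exists and is unique. This is the prototype of the partial-sum/horizontal-strip argument that recurs in Theorem~\ref{BZ}, so I would phrase it in a way that isolates a clean sublemma on fillings of ``$(l-1)$-rectangles with one short row'' that can be reused. The other cases are essentially immediate from the rigidity of rectangular shapes.
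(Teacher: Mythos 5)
Your overall strategy (direct forcing of the column fillings) is the same as the paper's, but two of your key forcing claims are false, and they take the ``$\mu=\lambda$'' conclusions with them. In case (2), the assertion that the $\lambda_1-\lambda_2$ extra boxes in row $1$ ``must all be filled with $1$,'' and hence that $\mu=\lambda$ in \emph{all} of case (2), is wrong: take $\lambda=(6,3,3)$ and $\mu=(5,4,3)$ (the paper's own example). The first three columns are forced to be $1,2,3$, but the rest of row $1$ is $1,1,2$, so $K_{\lambda\mu}=1$ with $\mu\neq\lambda$; indeed, if $\mu=\lambda$ held throughout case (2), the hypothesis $\lambda_1-\lambda_2=1$ in the last sentence of the lemma would be superfluous. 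Uniqueness still holds, but the correct reason is that the leftover row-$1$ entries are the multiset obtained from $\mu$ by subtracting the column contributions, and weak increase along the row fixes their arrangement; only when $\lambda_1-\lambda_2=1$ does the requirement that $\mu$ be a partition force the single extra entry to be $1$, giving $\mu=\lambda$. (You also implicitly assume $\lambda_2>0$; the one-row subcase $\lambda_2=0$ needs a sentence.)

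In case (3), the claim that ``all the $l$'s are used up'' in the first $\lambda_l$ columns, equivalently that the length-$(l-1)$ columns draw their entries from $\{1,\dots,l-1\}$, is also false: for $\lambda=(3,3,1)$, $\mu=(3,2,2)$ (so $l=3$), the unique tableau has rows $1\,1\,1$, $2\,2\,3$, $3$, so an entry $l$ sits at the bottom of a short column and $\mu_l>\lambda_l$. The correct structure, which the paper exploits, is that each column of length $l-1$ omits exactly one value of $\{1,\dots,l\}$; the multiset of omitted values is determined by $\mu$ (the value $i$ is omitted from exactly $\lambda_1-\mu_i$ of the short columns), and row-weak-increase forces the omitted values to weakly \emph{decrease} from left to right --- your ``weakly increasing as one moves right'' is the wrong direction (check it against the paper's example, where the missing entries read $5,5,4,3,2$). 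Your addendum for case (3) inherits these errors: when $\lambda_{l-1}-\lambda_l=1$ the single short column must omit $l$ (omitting any $m<l$ would give $\mu_m<\mu_l$, so $\mu$ would not be a partition), not $m=l-1$ as you assert, and your equality $\mu_{l-1}=c-1=\lambda_{l-1}$ contradicts $\lambda_{l-1}=c$. With these repairs your argument becomes essentially the paper's proof.
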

\begin{proof}   Since $\lambda \unrhd \mu$, there is at least one semistandard Young tableau of shape $\lambda$ and weight $\mu$.  Let $T$ be such a tableau.

In case (1), we have $\ell(\lambda) = \ell(\mu)$, and the only case for $T$ is that every column of $T$ has entries $1$ through $l$ in sequential order, since entries must strictly increase from top to bottom in columns.  Thus $K_{\lambda \mu} = 1$ and $\mu = \lambda$.

In case (2), if we have $\lambda_2 = 0$, then $\lambda = (\lambda_1)$, and we must have $K_{\lambda \mu} = 1$ since any semistandard Young tableau $T$ of shape a single row must have entries in weakly increasing order.  So assume $\lambda_2 > 0$, in which case $\ell(\lambda) = \ell(\mu)$.  Since $\ell(\mu) = l$ is the number of rows of $T$, then the first $\lambda_2$ columns of $T$ must have exactly the entries $1$ through $l$ in that order.  The remaining $\lambda_1 - \lambda_2$ entries must be in weakly increasing order in row 1 of $T$.  For example, if $\lambda = (6,3,3)$ and $\mu = (5, 4, 3)$, then the first $3$ columns of the tableau must have entries $1, 2, 3$, and the remaining entries must be in row 1 in a fixed order: 
$$\young(111\;\;\;,222,333) \quad \mapsto \quad \young(111112,222,333).$$
There is thus only one such $T$, and $K_{\lambda \mu} = 1$.  If $\lambda_1 - \lambda_2 = 1$, then the only entry that can appear in the right-most entry of the first row of $T$ is $1$, in order for the weight $\mu$ to be a partition.  Thus $\mu = \lambda$ in this case.

In case (3), first consider the case that $\lambda_l = 0$, so that $\ell(\lambda) = l -1$.  In this case, each column of $T$ is missing exactly one of the entries $1$ through $l$.  Since the entries of each column strictly increase from top to bottom, knowing the missing entry is the same as knowing the column entries.  Since also row entries weakly increase from left to right, then given columns of length $l-1$ with entries from $1$ through $l$, the columns can only be arranged in one way to form $T$, which is in such a way that the missing entries from each column weakly decrease from left to right.  In other words, $T$ is uniquely determined from $\lambda$ and $\mu$, and $K_{\lambda \mu} = 1$.  For example, if $\lambda = (5, 5, 5, 5)$ and $\mu = (5, 4, 4, 4, 3)$, then every column has a $1$, one column each is missing a $2, 3$, and $4$, and one column is missing a $5$.  The only such $T$ with this property is
$$ \young(11111,22223,33344,44555) \, ,$$
where the missing entries from each column, from left to right, are $5, 5, 4, 3$, and $2$.  Note that we cannot have $\lambda_{l-1} - \lambda_{l} = 1$ and $\ell(\mu) = l$ in this case.  Now consider the case that $\lambda_l > 0$.  The first $\lambda_l$ columns of $\lambda$ are then of length $l$, and so these columns of $T$ must have the entries $1$ through $l$ from top to bottom.  If we remove these columns from $\lambda$ and these entries from $\mu$, we are left with a partition of length $l-1$ with all equal parts, and now the case just covered (case (3) with $\lambda_l = 0$) applies.  Thus $K_{\lambda \mu} = 1$.  If $\lambda_{l-1} - \lambda_l = 1$, then again the entries of the first $\lambda_l$ columns of $T$ must be $1$ through $l$ from top to bottom, and the only choice for the last column of $T$ to have the weight $\mu$ a partition, is to have the entries $1$ through $l-1$ from top to bottom.  This implies $\mu = \lambda$.
\end{proof}

We will also need the following result.

\begin{lemma} \label{Rowilastrow}  Let $\lambda, \mu \in \mcP_n$ such that $K_{\lambda \mu} = 1$ and $\ell(\lambda) = h \leq \ell(\mu) = l$.  Then the unique semistandard Young tableau $T$ of shape $\lambda$ and weight $\mu$ has the following properties.
\begin{enumerate}
\item[(1)] The first entry in each row $i$ of $T$ is $i$ for $i = 1, \ldots, h$.
\item[(2)] If $h=l$, then every entry of row $h$ of $T$ is $h$.  If $h < l$, then every entry greater than $h$ in $T$ is in row $h$ of $T$, and further $\sum_{j=h+1}^l \mu_j < \lambda_h$.
\end{enumerate}
\end{lemma}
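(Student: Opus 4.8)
The plan is to prove (1) and (2) simultaneously by strong induction on $n=|\lambda|$, by peeling off the horizontal strip formed by the largest entries. The base case $n=1$ is trivial, and we may assume $l\geq 2$ (if $l=1$ then $\mu=\lambda=(n)$ and both parts are clear). In $T$ the box $(1,1)$ holds the smallest entry, which is $1$ since $\mu_1>0$. Since a semistandard Young tableau of shape $\lambda$ and weight $\mu$ is the same datum as one of some shape $\nu$ and weight $\mu^- := (\mu_1,\ldots,\mu_{l-1})$ together with a horizontal $\mu_l$-strip $\lambda-\nu$ of $l$'s appended to it, we have $K_{\lambda \mu}=\sum_\nu K_{\nu \mu^-}$, the sum over $\nu$ with $\lambda-\nu$ a horizontal $\mu_l$-strip. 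As $K_{\lambda \mu}=1$, exactly one such $\nu$ has $K_{\nu \mu^-}\geq 1$, and for it $K_{\nu \mu^-}=1$; moreover $\ell(\nu)\leq\ell(\mu^-)=l-1$ since $\nu\unrhd\mu^-$, and $|\nu|=n-\mu_l<n$, so the inductive hypothesis applies to the pair $(\nu,\mu^-)$. Writing $S$ for its unique tableau and $h'=\ell(\nu)$, the tableau $T$ is $S$ with a horizontal strip of $l$'s added along $\lambda-\nu$; since a horizontal strip can create at most one new row, $h=\ell(\lambda)\in\{h',h'+1\}$.

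For part (1): each row $i\leq h'$ has the same leftmost box in $T$ as in $S$, so by the inductive hypothesis (1) its leftmost entry is $i$. If $h=h'$ this already gives (1). If $h=h'+1$ then row $h$ consists entirely of $l$'s, so its leftmost entry is $l$, and since $h'=h-1\leq l-1$ this is consistent with (1) only when $h=l$. The remaining case, $h=h'+1$ with $h<l$, must be excluded, and this is the heart of the argument: in it $h'\leq\ell(\mu^-)-1$, so part (2) of the inductive hypothesis forces every entry of $S$ exceeding $h'$ into its bottom row $h'$, with $\sum_{j=h'+1}^{l-1}\mu_j<\nu_{h'}$; exploiting this rigidity one moves a box of value $h'+1$ out of row $h'$ to open a new bottom row, reassigning the vacated cell and the cells of $\lambda-\nu$ so as to build a second semistandard Young tableau of shape $\lambda$ and weight $\mu$ — contradicting $K_{\lambda \mu}=1$. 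Hence $h=h'$ or $h=l$, which proves (1).

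For part (2): if $h=l$, then every box of row $h$ sits at the bottom of a column of height $h$, hence has entry $\geq h$, and has entry $\leq l=h$, so equals $h$. If $h<l$, then by the previous paragraph $h=h'$, so $T$ is $S$ with $l$'s appended along $\lambda-\nu$ and no new row created. Part (2) of the inductive hypothesis, applied to $(\nu,\mu^-)$, puts every entry of $S$ exceeding $h$ into row $h$; and the appended $l$'s also all lie in row $h$, for otherwise a box of $\lambda-\nu$ lying above row $h$ could be slid down into row $h$ to produce a different $\nu$ with $K_{\nu \mu^-}\geq 1$, again contradicting uniqueness. Hence every entry of $T$ exceeding $h$ lies in row $h$; and since row $h$ also contains a box with entry $h$ by part (1), at most $\lambda_h-1$ of its boxes exceed $h$, so $\sum_{j=h+1}^{l}\mu_j<\lambda_h$. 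This completes the induction.

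The only genuine difficulty is in the two ``construct a second object'' steps — producing the explicit alternative tableau (respectively the alternative $\nu$) and checking that it is semistandard with the correct weight; everything else is routine bookkeeping with horizontal-strip decompositions and the inductive hypothesis.
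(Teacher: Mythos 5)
Your route is genuinely different from the paper's: you induct on $n$ by stripping off the horizontal strip of $l$'s and quoting the inductive hypothesis for $(\nu,\mu^-)$, whereas the paper proves (1) directly by a swap argument (take the minimal row $i$ whose first entry is some $j>i$ and exchange that $j$ with a $j-1$ entry having no $j$ below it) and proves (2) by noting that the greedy bottom-to-top strip filling of Section 2.2 must coincide with the unique tableau. Your outline could be completed, but as written it has a genuine gap precisely at the step you yourself call the heart of the argument: the exclusion of the case $h=h'+1$ with $h<l$. You never exhibit the second tableau, and the construction you hint at --- move an entry $h'+1=h$ out of row $h-1$ to open the new bottom row and refill the vacated cell and the strip cells --- does not work whenever $l\geq h+2$. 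Indeed, since $\mu_{h+1},\ldots,\mu_{l-1}>0$, the inductive hypothesis forces entries with values in $\{h+1,\ldots,l-1\}$ into row $h-1$ of $S$, sitting strictly to the right of every $h$-entry there; the vacated cell can only receive an $l$ (the strip values are the only ones being redistributed), and the cell immediately to its right then carries a value less than $l$, so weak increase along row $h-1$ fails. A correct repair needs more than local reassignment: e.g.\ shift the segment of row $h-1$ to the right of its rightmost $h$ one step left, place an $l$ in the cell $(h-1,\nu_{h-1})$, and put the displaced $h$ at $(h,1)$; the column check at $(h-1,\nu_{h-1})$ then requires $\nu_{h-1}>\lambda_h$, which you must extract from the inductive inequality $\sum_{j=h}^{l-1}\mu_j<\nu_{h-1}$ combined with $\mu_{l-1}\geq\mu_l\geq\lambda_h$ (the last holding because all of row $h$ lies in the strip). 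Alternatively one can produce a second admissible shape $\nu'$ (remove the last box of row $h-1$ of $\nu$, add the box $(h,1)$) and check $\nu'\unrhd\mu^-$ and that $\lambda-\nu'$ is a horizontal strip --- which again hinges on the same inequality $\nu_{h-1}>\lambda_h$. None of this appears in your write-up, so the case is not actually excluded and part (1), on which your part (2) also leans, is unproved.

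The other deferred construction, sliding a strip box from a row $i<h$ down to row $h$ to get a different $\nu$ with $\nu\unrhd\mu^-$, is essentially sound but also needs its checks spelled out: take the leftmost strip box of row $i$, observe that the horizontal-strip condition forces $\nu_{i-1}>\nu_i$ so the modified shape is a partition, note that the modified skew has at most one box per column, and note that moving a box of $\nu$ from row $h$ up to row $i$ only increases partial sums, so dominance (hence positivity of the Kostka number) is preserved. Those verifications are routine; the missing construction in the $h=h'+1$, $h<l$ case is not, and that is the substantive gap.
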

\begin{proof}  For (1), by way of contradiction suppose that $i$ is minimal such that the first entry of row $i$ of $T$ is not $i$.  If the first entry of row $i$ is $j > i$, then this $j$ entry does not have a $j-1$ entry directly above it.  Since $\mu_{j-1} \geq \mu_j$, then $T$ must have a $j-1$ entry with no $j$ entry directly below it.  In particular, such an entry has either no square below it, or has an entry greater than $j$ below it.  This implies that we may switch this $j-1$ entry with the first $j$ entry in row $i$, resulting in another distinct semistandard Young tableau of shape $\lambda$ and weight $\mu$, contradicting the fact that $K_{\lambda \mu} = 1$.

For statement (2), we consider the construction of $T$ by greedily filling horizontal strips from bottom to top, as described in Section \ref{Kostka}.  Since $K_{\lambda \mu} = 1$, then this construction must yield the unique tableau $T$.  By the construction, we begin by filling row $h$ of $T$ from right to left with the $l$ entries, and moving to entries in shorter columns if we fill up row $h$, and then continuing with smaller entries on the next horizontal strip up.  We note that by part (1), $T$ must have the property that row $h$ has first entry $h$, which means that when constructing $T$ in this way, if $h = l$, then every entry of row $h$ must be $h$, and if $h < l$ then we never fill up row $h$ with entries greater than $h$.  This implies that $T$ has the property that all entries greater than $h$ must be in row $h$, and these do not occupy all of row $h$.  In other words, $\mu_{h+1} + \cdots + \mu_l < \lambda_h$.
\end{proof}

We remark that the proof of property (1) in Lemma \ref{Rowilastrow} also implies that whenever $\lambda \unrhd \mu$, there always exists a semistandard Young tableau $T$ of shape $\lambda$ and weight $\mu$ such that each row $i$ of $T$ has first entry $i$, for $i = 1, \ldots, h$, although we will not need this statement here.  

\begin{proof}[Proof of Theorem \ref{BZ}]  First we assume that $\lambda, \mu \in \mcP_n$ satisfy the conditions listed in Theorem \ref{BZ}.  Observe that $\sum_{k=1}^t |\mu^k| = |\mu| = n$, and since $|\lambda^k| = |\mu^k|$ for $k = 1, \ldots, t$, and $|\lambda| = n$, then every part of $\lambda$ must be a part of some $\lambda^k$, and so $\ell(\mu) = l  \geq \ell(\lambda)$.  Also, since $\lambda^k \unrhd \mu^k$ for each $k$, it follows that $\lambda \unrhd \mu$, and so $K_{\lambda \mu} \geq 1$.

Consider a semistandard Young tableau $T$ of shape $\lambda$ and weight $\mu$.  Since entries strictly increase down columns of $\lambda$, any entry in the $i$th row of $T$ must be at least $i$.  Consider rows $1$ through $i_1$ of $T$.  Since any row past $i_1$ must have entries at least $i_1 + 1$, then all $|\mu^1|$ entries of value at most $i_1$ must occur in the first $i_1$ rows of $T$.  Since $|\lambda^1| = |\mu^1|$, it follows that the \emph{only} entries in the first $i_1$ rows of $T$ can be $1$ through $i_1$.  By induction, the only entries in rows $i_{k-1} + 1$ through $i_k$ can be the values $i_{k-1} + 1$ through $i_k$.  It follows that we may consider tableaux of shapes $\lambda^k$ and weights $\mu^k$ independently, and that $K_{\lambda \mu} = \prod_{k=1}^t K_{\lambda^k \mu^k}$.  It is therefore enough to show that for $\lambda, \mu \in \mcP_n$, with $\ell(\mu) = l$, if $\lambda \unrhd \mu$ and either $\lambda_1 = \lambda_2 = \cdots = \lambda_{l-1}$ or $\lambda_1 > \lambda_2 = \lambda_3 = \cdots = \lambda_l$ (where $\lambda_i = 0$ if $i > \ell(\lambda)$), then $K_{\lambda \mu} = 1$.  This is exactly what is proved in Lemma \ref{ColLemma}.  

We now assume $K_{\lambda \mu} = 1$, and we prove there always exists a choice of indices $0 = i_0 < i_1 < i_2 < \ldots < i_t = l$ with the desired properties.  Throughout, we take $T$ to be the unique semistandard Young tableau of shape $\lambda$ and weight $\mu$.  We prove the statement by induction on $n$, where in the case $n=1$, we have $\lambda = (1) = \mu$, and the result is immediate.  So we assume $n > 1$, and that the statement holds for all positive integers less than $n$.  We note that the statement quickly follows for $\lambda = (n)$ or $\lambda = (1, 1, \ldots, 1)$, and so we may assume that $\lambda$ is not of these forms.  We consider four scenarios in the induction.  

Let $\ell(\lambda) = h \leq \ell(\mu) = l$.  We first consider the case that $\mu_{h+1} < \mu_h$ (which includes the case that $h=l$) and $\mu_h > 1$.  Define new partitions $\lambda^*$ and $\mu^*$ by $\lambda^*_i = \lambda_i - 1$ and $\mu^*_i = \mu_i - 1$ for $1 \leq i \leq h$, and $\mu^*_i = \mu_i$ for $i > h$.  The assumption that $\mu_{h+1} < \mu_h$ guarantees that $\mu^*$ is indeed a partition, and we have $\lambda^*, \mu^* \in \mcP_{n-h}$.  Since $\lambda \unrhd \mu$, we also have $\lambda^* \unrhd \mu^*$.  Consider any semistandard Young tableau $T^*$ of shape $\lambda^*$ and weight $\mu^*$.  Given $T^*$, add a box with an $i$ entry at the beginning of row $i$ for $i = 1, \ldots, h$ (noting that some of these rows of $T^*$ might be initially length $0$).  Since the least possible entry in row $i$ of a semistandard Young tableau is $i$, then this construction yields a semistandard Young tableau of shape $\lambda$ and weight $\mu$, which then must be $T$ since $K_{\lambda \mu} = 1$, and note that $T$ has the property that the first entry of each row $i$ is $i$ by Lemma \ref{Rowilastrow}.  Consequently, $T^*$ must be the unique semistandard Young tableau of shape $\lambda^*$ and weight $\mu^*$, since a distinct such tableau would yield a distinct tableau of shape $\lambda$ and weight $\mu$.  Thus, $K_{\lambda^* \mu^*} = 1$, and we may apply the induction hypothesis to $\lambda^*$ and $\mu^*$.  Let $0 = i_0^* < i_1^*< \cdots < i_t^* = \ell(\mu^*)$ be the desired indices for $\lambda^*$ and $\mu^*$, which exist by the induction hypothesis, with the accompanying subpartitions $\lambda^{*k}$ and $\mu^{*k}$.  Note that $\ell(\mu^*) = \ell(\mu)$ since $\mu_h > 1$.     Now let $i_k = i_k^*$ for $k = 0, 1, \ldots, t$, and consider the corresponding subpartitions $\lambda^k$ and $\mu^k$.  It immediately follows that since $\lambda^{*k} \unrhd \mu^{*k}$, then $\lambda^k \unrhd \mu^k$ for $k = 1, \ldots, t$.  Also, since either $\lambda^*_{i^*_{k-1} + 1} = \lambda^*_{i^*_{k-1} +2} = \cdots = \lambda^*_{i^*_k -1}$ or $\lambda^*_{i^*_{k-1} + 1} > \lambda^*_{i^*_{k-1} +2} = \lambda^*_{i^*_{k-1}+3} = \cdots = \lambda^*_{i^*_k}$, then either $\lambda_{i_{k-1} + 1} = \lambda_{i_{k-1} +2} = \cdots = \lambda_{i_k -1}$ or $\lambda_{i_{k-1} + 1} > \lambda_{i_{k-1} +2} = \lambda_{i_{k-1}+3} = \cdots = \lambda_{i_k}$, respectively, where the $k<t$ case follows immediately.  The $k=t$ case follows from the assumption that $\mu_h > 1$, because if $\lambda_h = 1$ then $\ell(\lambda) = \ell(\mu)$ (since the last row of $T$ would have no room for entries greater than $h$), and if $\lambda_h > 1$ then $\ell(\lambda) = \ell(\lambda^*)$, and these statements imply the desired subpartition properties are preserved.

If $\mu_{h+1} < \mu_h$ and $\mu_h =1$, then $\mu_{h+1} = 0$ and $h = l$, and $\lambda_h = 1$, which follows from Lemma \ref{Rowilastrow} since all entries in row $h$ of $T$ must be $h$.  Now let $\lambda^* = (\lambda_1, \ldots, \lambda_{l-1})$ and $\mu^* = (\mu_1, \ldots, \mu_{l-1})$, and we must have $K_{\lambda^* \mu^*} = 1$ since we may add a single box with entry $l$ at the bottom of any semistandard Young tableau of shape $\lambda^*$ and weight $\mu^*$ to obtain a tableau which must be $T$.  We take the indices obtained by applying the induction hypothesis to $\lambda^*, \mu^* \in \mcP_{n-1}$, and we add to it the index $i_t = l$ to obtain indices for $\lambda$ and $\mu$.  It immediately follows that the resulting subpartitions have the desired properties.

Now consider the case that $\mu_{h+1} = \mu_h$ (so $\ell(\lambda) = h < \ell(\mu) = l$) and either $l > h+1$ or $\lambda_h = \lambda_{h-1}$.  By Lemma \ref{Rowilastrow}, all entries greater than $h$ must be in row $h$ of $T$.  Now define $\lambda^*$ and $\mu^*$ by $\lambda^*_i = \lambda_i$ if $i < h$, and $\lambda^*_h = \lambda_h -\mu_l$, and $\mu^*_i = \mu_i$ if $i < l$, and $\mu^*_i = 0$ if $i =l$.  Note that $\lambda^*_h > 0$ by Lemma \ref{Rowilastrow}.  Now $\lambda^*, \mu^* \in \mcP_{n-\mu_l}$, and $\lambda^* \unrhd \mu^*$ since $\lambda \unrhd \mu$.  If $T^*$ is any semistandard Young tableau of shape $\lambda^*$ and weight $\mu^*$, then we can add $\mu_l$ boxes with entries of $l$ to row $h$ of $T^*$, and we obtain a semistandard Young tableau of shape $\lambda$ and weight $\mu$.  This must be $T$, since $K_{\lambda \mu} = 1$, and then we must also have $K_{\lambda^* \mu^*} = 1$.   Let $0 = i_0^* < i_1^*< \cdots < i_t^* = \ell(\mu^*) = l-1$ be the indices which exist by the induction hypothesis, with the subpartitions $\lambda^{*k}$ and $\mu^{*k}$ with the desired properties.   Define $i_k = i_k^*$ if $k < t$, and $i_t = i_t^*+1 = l$.  Then $\lambda^k=\lambda^{*k}$ and $\mu^k = \mu^{*k}$ if $k <t$, so the desired properties hold.  When $k=t$, then $\lambda^k \unrhd \mu^k$, and the fact that $\lambda^k$ is of the desired shape follows from the assumption that either $l > h+1$, in which case $i_{t-1} = h-1$ necessarily, or $\lambda_h = \lambda_{h-1}$ with $l = h+1$, in which case $i_t^* = h$ and $i_t = h+1=l$.

Finally, consider the case with $\mu_{h+1} = \mu_h$, $l = h+1$, and $\lambda_h < \lambda_{h-1}$.  Assume that in row $h-1$ of $T$, there are some $h$ entries.  This implies that the right-most entry in row $h-1$ is $h$, since all $\mu_l$ of the $l=h+1$ entries in $T$ are in row $h$ of $T$, by Lemma \ref{Rowilastrow}.  Consider the left-most $l$ entry in row $h$ of $T$.  There cannot be an $l-1=h$ entry directly above this entry, since then all entries to the right of it would also be $h$, which would imply $\mu_h > \mu_{h+1}$, a contradiction.  Then we can exchange the left-most $l = h+1$ entry in row $h$ with the right-most $h$ entry in row $h-1$ of $T$ to obtain a distinct semistandard Young tableau $T'$ of shape $\lambda$ and weight $\mu$, contradicting $K_{\lambda \mu} = 1$.  Thus, there are no $h$ entries in row $h-1$ of $T$, and thus no $h$ entries above row $h$ of $T$ by the greedy filling of horizontal strips, since $\lambda_h < \lambda_{h-1}$, and if there are other $h$ entries, there must be some in row $h-1$.  If we define $\lambda^*$ by $\lambda^*_i = \lambda_i$ if $i < h$ and $\lambda^*_h = 0$, and we define $\mu^*$ by $\mu^*_i = \mu_i$ if $i < h$ and $\mu^*_h = \mu^*_{h+1} = 0$, then we can apply the induction hypothesis to $\lambda^*$ and $\mu^*$.  Taking the indices for $\lambda^*$ and $\mu^*$ to be $i_0$ through $i_{t-1} = h-1$ for $\lambda$ and $\mu$, and $i_t = h+1 = l$, gives the desired indices for $\lambda$ and $\mu$.  This completes the induction.
\end{proof}

 Given $\lambda \in \mcP_n$ and a composition $\omega$ of $n$, Narayanan \cite{Na06} considered the computational complexity of answering the question of whether $K_{\lambda \omega} > 0$, and proved that this can be answered in polynomial time in terms of the number of bits used to describe the tuples $\lambda$ and $\omega$ \cite[Proposition 1]{Na06}.  We now observe a similar result for the question of whether $K_{\lambda \omega} = 1$.  Following \cite{Na06}, we let $\mathrm{size}(\lambda, \omega)$ denote the number of bits used to describe $\lambda$ and $\omega$.

\begin{corollary} \label{K1comp} Given any $\lambda \in \mcP_n$ and any composition $\omega$ of $n$, the question of whether $K_{\lambda \omega} = 1$ can be answered in polynomial time.
\end{corollary}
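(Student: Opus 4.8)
The plan is to combine Theorem \ref{BZ} with a short dynamic programming argument. First, by Corollary \ref{PermInvar} we may sort the parts of $\omega$ to obtain a partition $\mu$ with $K_{\lambda\omega}=K_{\lambda\mu}$; sorting takes time polynomial in $\mathrm{size}(\lambda,\omega)$, so it suffices to decide whether $K_{\lambda\mu}=1$. Under the natural encoding of $\lambda$ and $\omega$ as tuples of integers written in binary (the same encoding used in \cite{Na06}), the number of parts $l=\ell(\mu)$ is at most $\mathrm{size}(\lambda,\omega)$, and every part is an integer of bit length at most $\mathrm{size}(\lambda,\omega)$; hence all arithmetic below is on integers of polynomially bounded size and all loops over indices run for polynomially many steps. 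If $\ell(\lambda)>l$ we may immediately answer ``no'' (Theorem \ref{BZ} presupposes $\ell(\lambda)\le l$, and indeed $\ell(\lambda)>\ell(\mu)$ forces $\lambda\not\unrhd\mu$, so $K_{\lambda\mu}=0$); otherwise pad $\lambda$ with zeros so that $\lambda=(\lambda_1,\dots,\lambda_l)$. By Theorem \ref{BZ}, $K_{\lambda\mu}=1$ if and only if $\{1,2,\dots,l\}$ can be partitioned into consecutive intervals $[i_{k-1}+1,i_k]$, $k=1,\dots,t$, each of which is \emph{valid}, meaning the corresponding subsequences $\lambda^k$ and $\mu^k$ satisfy conditions (1) and (2) of that theorem.

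The key point is that validity of a single interval $[a,b]$ is decidable in polynomial time. Condition (2) is the assertion that either $\lambda_a=\lambda_{a+1}=\cdots=\lambda_{b-1}$ or $\lambda_a>\lambda_{a+1}=\lambda_{a+2}=\cdots=\lambda_b$, which is checked with $O(b-a)$ comparisons. Condition (1), that $(\lambda_a,\dots,\lambda_b)\unrhd(\mu_a,\dots,\mu_b)$ in the dominance order, amounts to verifying $\sum_{i=a}^{b}\lambda_i=\sum_{i=a}^{b}\mu_i$ together with $\sum_{i=a}^{m}\lambda_i\ge\sum_{i=a}^{m}\mu_i$ for every $m$ with $a\le m\le b$, which is $O(b-a)$ additions and comparisons. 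So each interval $[a,b]$ is tested in time polynomial in $\mathrm{size}(\lambda,\omega)$.

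To decide whether a partition of $\{1,\dots,l\}$ into valid intervals exists, use the standard prefix dynamic program: set $D(0)=\mathrm{true}$ and, for $i=1,\dots,l$, declare $D(i)=\mathrm{true}$ exactly when there is some $j$ with $0\le j<i$, $D(j)=\mathrm{true}$, and the interval $[j+1,i]$ valid. An immediate induction on $i$ shows that $D(i)$ is true if and only if $[1,i]$ can be tiled by valid intervals; hence $K_{\lambda\mu}=1$ if and only if $D(l)=\mathrm{true}$. The table has $l+1$ entries, each requiring at most $l$ choices of $j$ and one polynomial-time validity check, so the whole algorithm runs in time polynomial in $\mathrm{size}(\lambda,\omega)$. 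I do not anticipate a genuine obstacle: once Theorem \ref{BZ} is in hand, its characterization is visibly of a form amenable to dynamic programming, and the only thing that really needs to be stated is that the number of parts is polynomially bounded in the input length under the standard encoding, so that iterating over intervals is legitimately polynomial. (One could alternatively first invoke \cite[Proposition 1]{Na06} to dispose of the case $K_{\lambda\mu}=0$, but this is subsumed above, since the existence of a valid interval partition forces $\lambda\unrhd\mu$ and hence $K_{\lambda\mu}\ge 1$.)
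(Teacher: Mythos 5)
Your argument is correct, but it takes a different algorithmic route than the paper. You reduce to a partition $\mu$ by sorting (as the paper also does, via \cite{Na06}), invoke Theorem \ref{BZ} to recast $K_{\lambda\mu}=1$ as the existence of a tiling of $\{1,\dots,l\}$ by ``valid'' consecutive intervals, and then decide existence of such a tiling by a prefix dynamic program that tests all $O(l^2)$ candidate intervals, each in $O(l)$ arithmetic steps; this is transparently correct and needs nothing beyond Theorem \ref{BZ} itself. The paper instead runs a single left-to-right greedy pass: it extends the current block one part at a time, rejects as soon as the shape condition (2) or a partial-sum inequality fails, and cuts a block at the \emph{first} index where the sizes balance. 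The correctness of that greedy rests on an additional structural claim (the ``main point'' in the paper's proof): if $K_{\lambda\mu}=1$ and some prefix $(\lambda_1,\dots,\lambda_{i_1})$, $(\mu_1,\dots,\mu_{i_1})$ forms a valid block, then $i_1$ may always be chosen as the first index and the remaining truncated pair again has Kostka number $1$. What that extra claim buys is speed: the paper's algorithm runs in time $O(\mathrm{size}(\lambda,\mu))$, essentially linear and on par with Narayanan's positivity test, whereas your dynamic program costs on the order of $l^3$ arithmetic operations. For the corollary as stated --- polynomial time --- your proof fully suffices, and it trades the paper's sharper running time for a simpler correctness argument.
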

\begin{proof} First suppose that the composition $\omega$ is a partition $\mu \in \mcP_n$.  If we are given a set of indices which satisfy the conditions of Theorem \ref{BZ}, then to check whether $\lambda^k \unrhd \mu^k$ for each $k$ requires the same order of number of computations as it takes to check whether $\lambda \unrhd \mu$, which can be answered in time $O(\mathrm{size}(\lambda, \mu))$, as in \cite[Proof of Proposition 1]{Na06}.  We show that we do not have to search the set of all possibilities of indices which may satisfy the conditions of Theorem \ref{BZ}, but rather we may find a single set of indices while performing exactly the computations required to check whether $\lambda^k \unrhd \mu^k$ for only this set of indices.  The steps in the algorithm which accomplishes this are as follows.

The input is $\lambda = (\lambda_1, \lambda_2, \ldots, \lambda_l)$ and $\mu = (\mu_1, \mu_2, \ldots, \mu_l)$, where $\ell(\lambda) \leq \ell(\mu) = l$, so some $\lambda_j$ might be $0$ for some $j$.  We begin with $k=1$, $i_0=0$, $i=1$ and $\lambda^k = (\lambda_{i_{k-1} + 1}, \ldots, \lambda_i) = (\lambda_1)$ and $\mu_k = (\mu_{i_{k-1} + 1}, \ldots, \mu_i) = (\mu_1)$.  We first check that $\lambda^k$ is one of the desired shapes as in condition (2) of Theorem \ref{BZ}, and if it is not at any point, then the output is FALSE.  Otherwise, we check if the sum of the parts of $\lambda^k$ is greater than or equal to the sum of the parts of $\mu^k$, and if it is not at any point the output is FALSE.  Otherwise, we check if $|\lambda^k| = |\mu^k|$, and if $|\lambda^k| > |\mu^k|$, then if $i=l$ the output is FALSE, and otherwise we increase $i$ by $1$, and add a part to $\lambda^k$ and $\mu^k$, so $\lambda^k = (\lambda_1, \lambda_2)$ and $\mu^k = (\mu_1, \mu_2)$ and we repeat these steps.  If $|\lambda^k| = |\mu^k|$, then we have found an index, and let $i_k = i$, and we increase $k$ by $1$, and repeat the steps starting with $\lambda^k = (\lambda_{i_{k-1} + 1})$.  If we get through all of these steps through $i=l$, then the output is TRUE.  The main point is the following.  Suppose that for some $i_1$, for $\lambda^1 = (\lambda_1, \ldots, \lambda_{i_1})$ and $\mu^1 = (\mu_1, \ldots, \mu_{i_1})$, we have $\lambda^1 \unrhd \mu^1$ and $\lambda^1$ is one of the shapes in condition (2) of Theorem \ref{BZ}.   If $K_{\lambda \mu} = 1$, then we can always choose $i_1$ as our first index, and if $\lambda^*$ and $\mu^*$ consist of the parts of $\lambda$ and $\mu$ after part $i_1$, then also $K_{\lambda^* \mu^*} = 1$.  It is this fact that allows us to find indices through the algorithm above, giving that we can check whether $K_{\lambda \mu} = 1$ in time $O(\mathrm{size}(\lambda, \mu))$.

If we consider now an arbitrary composition $\omega$, then we first must find a permutation which permutes the parts of $\omega$ to a partition $\mu$, and again as in \cite[proof of Proposition 1]{Na06}, with this the question of whether or not $K_{\lambda \omega} = 1$ can be answered in time $O(\mathrm{size}(\lambda, \omega) \ln(\mathrm{size}(\lambda, \omega)))$.
\end{proof}

It follows from Theorem \ref{BZ}, or from direct observation, that for any partition $\lambda$ we have $K_{\lambda \lambda} = 1$.  The following result gives exactly which partitions $\lambda$ satisfy $K_{\lambda \mu} = 1$ for only $\mu = \lambda$.  The proof of this statement as it appears in \cite[Corollary 5.1]{GaGoVi12} follows from an enumeration of those $\mu$ which satisfy $K_{\lambda \mu} = 1$, although here we can obtain it as a corollary of Theorem \ref{BZ} and Lemma \ref{ColLemma}.  Instead of giving that proof here, we instead give a generalization of the result in Corollary \ref{J1Multi} and a proof independent of the following result. 

\begin{corollary}[Gates, Goldman, and Vinroot]  \label{J1partitions} Let $\lambda \in \mcP$.  The only $\mu \in \mcP$ which satisfies $K_{\lambda \mu} = 1$ is $\mu = \lambda$ if and only if $\lambda_i - \lambda_{i+1} \leq 1$ for all $i$ (where $\lambda_i = 0$ if $i > \ell(\lambda)$).  
\end{corollary}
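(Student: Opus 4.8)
The plan is to derive the corollary from Theorem \ref{BZ} and Lemma \ref{ColLemma}. Throughout write $h=\ell(\lambda)$ and set $\lambda_i=0$ for $i>h$; note that $\lambda_i-\lambda_{i+1}\le 1$ for all $i$, read at $i=h$, forces $\lambda_h=1$.

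I would first dispatch the easier implication in contrapositive form: if $\lambda_i-\lambda_{i+1}\ge 2$ for some $i$, produce a $\mu\ne\lambda$ with $K_{\lambda\mu}=1$. Take $\mu$ equal to $\lambda$ in every coordinate except $\mu_i=\lambda_i-1$ and $\mu_{i+1}=\lambda_{i+1}+1$. The gap hypothesis makes $\mu$ a partition (and $\mu\ne\lambda$), and Theorem \ref{BZ} applies with the index set $\{0,1,\dots,\ell(\mu)\}\setminus\{i\}$: every block is a single row on which $\lambda$ and $\mu$ agree, except the two-row block on rows $i,i+1$, where condition (2) of Theorem \ref{BZ} is automatic for a length-two block and condition (1) reads $(\lambda_i,\lambda_{i+1})\unrhd(\lambda_i-1,\lambda_{i+1}+1)$, which holds. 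Hence $K_{\lambda\mu}=1$. The only thing to check here is that $\mu$ is genuinely a partition, which is immediate from the gap hypothesis.

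For the main implication, assume $\lambda_i-\lambda_{i+1}\le 1$ for all $i$, let $\mu\in\mcP$ satisfy $K_{\lambda\mu}=1$, and apply Theorem \ref{BZ} to obtain indices $0=i_0<i_1<\dots<i_t=\ell(\mu)$ with blocks $\lambda^k,\mu^k$; the goal is $\mu^k=\lambda^k$ for every $k$, whence $\mu=\lambda$. First, $|\lambda^k|=|\mu^k|$ for all $k$: summing the inequalities $|\lambda^k|\ge|\mu^k|$ coming from $\lambda^k\unrhd\mu^k$ and using $\sum_k|\lambda^k|=|\lambda|=|\mu|=\sum_k|\mu^k|$ forces termwise equality. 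As in the proof of Theorem \ref{BZ}, condition (2) there places each $\lambda^k$ in one of the three shapes of Lemma \ref{ColLemma}. The crux is that the hypothesis pins the relevant consecutive difference inside $\lambda^k$ down to exactly $1$: in Lemma \ref{ColLemma}(2) the difference $\lambda^k_1-\lambda^k_2$ is positive and, being a difference of consecutive parts of $\lambda$, at most $1$, hence $=1$; similarly in Lemma \ref{ColLemma}(3) the bottom difference equals $1$. In each of Lemma \ref{ColLemma}(1), case (2) with top difference $1$, and case (3) with bottom difference $1$, the uniqueness clause yields $\mu^k=\lambda^k$.

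The step requiring the most care, and the main obstacle, is ruling out blocks $\lambda^k$ that reach beyond row $h$, i.e. that carry a trailing $0$ part, since Lemma \ref{ColLemma} does not directly give $\mu^k=\lambda^k$ for those. I would argue that no such block occurs: if $s=i_k-i_{k-1}$ and $\lambda^k$ has a zero part, then condition (2) of Theorem \ref{BZ} together with $\lambda_h=1$ forces $\lambda^k$ to be $(0,\dots,0)$, $(1,0,\dots,0)$, or $(1,\dots,1,0)$, each of size strictly less than $s$; but $\mu^k$ consists of $s$ positive parts of $\mu$, so $|\mu^k|\ge s$, contradicting $|\lambda^k|=|\mu^k|$. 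Hence $\ell(\mu)=\ell(\lambda)$, every $\lambda^k$ has all parts positive, and the analysis above applies verbatim (in particular Lemma \ref{ColLemma}(3) is invoked with a positive last part), giving $\mu^k=\lambda^k$ for all $k$ and completing the proof.
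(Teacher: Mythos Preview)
Your proof is correct and follows exactly the route the paper indicates: the paper does not write out a direct proof of this corollary, saying instead that ``we can obtain it as a corollary of Theorem~\ref{BZ} and Lemma~\ref{ColLemma}'' and deferring to the generalization in Corollary~\ref{J1Multi}, whose proof for $r=1$ reduces to your argument. If anything, you are more explicit than the paper about the boundary case of blocks containing zero parts (which the paper handles only implicitly via the impossibility clause in Lemma~\ref{ColLemma}(3) and size considerations).
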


Our main goal is to give conditions on an $r$-multipartition $[\lambda(j)]$ and a partition $\mu$ which are equivalent to $K_{[\lambda(j)] \mu} = 1$, and to give the generalizations of Corollaries \ref{K1comp} and \ref{J1partitions} to multipartitions.  Before we answer these questions, we make a departure into representation theory to find motivation to study these Kostka numbers.

\section{Permutation Characters of Wreath Products} \label{wreath}
Let $S_n$ denote the symmetric group on $n$ elements.  For any group $G$, and any subgroup of permutations $P \leq S_n$, we let $G \wr P$ denote the semidirect product $G^n \rtimes P$, where $G^n$ is the direct product of $n$ copies of $G$, and $P$ acts by inverse permutation of indices on elements in $G^n$.  That is, if $g = (g_1, \ldots, g_n) \in G^n$ and $\tau \in P$, then the action of $\tau$ on $g$ is given by
$${^\tau g} = (g_{\tau^{-1}(1)}, \ldots, g_{\tau^{-1}(n)}).$$
If $P = S_n$, then $G \wr S_n$ is the standard wreath product.

Let $A$ and $B$ be finite groups, with complex representations $(\pi, V)$ and $(\rho, W)$, with characters $\chi$ and $\eta$, respectively.  We let $\pi \odot \rho$ denote the external tensor product of representations, which is a representation of $A \times B$ acting on the space $V \otimes W$, with character given by $(\chi \odot \eta)(a,b) = \chi(a)\eta(b)$.  In particular, $\bigodot_{i=1}^n \pi = \pi^{\odot n}$ is a representation of $A^n$ acting on $\bigotimes_{i=1}^n V = V^{\otimes n}$, with character $\bigodot_{i=1}^n \chi = \chi^{\odot n}$.  If $(\pi, V)$ and $(\rho, W)$ are both representations of $A$, then we let $\pi \otimes \rho$ denote the internal tensor product of representations, which is a representation of $A$ acting on $V \otimes W$, with character given by $(\chi \otimes \eta)(a) = \chi(a)\eta(a)$.    

We let $\bone$ denote the trivial character of a finite group.  For any finite group $A$, let $\Irr(A)$ denote the collection of irreducible complex characters of $A$.    Given any character $\chi$, we let $\chi(1)$ denote its degree.  If $\chi$ and $\eta$ are complex-valued class functions of $A$, let $\langle \chi, \eta \rangle = 1/|A| \sum_{a \in A} \chi(a) \overline{\eta(a)}$ be the standard inner product.  For a subgroup $D \leq A$, and a character $\xi$ of $D$, we let $\Ind_D^A(\xi)$ denote the induced character from $D$ to $A$.  

As in Section \ref{Kostka}, for any $\lambda \in \mcP_n$, we let $\pi^{\lambda}$ be the irreducible representation of $S_n$ associated with $\lambda$, with character $\chi^{\lambda}$, in such a way that $\chi^{(n)} = \bone$ and $\chi^{(1, 1, \ldots, 1)}$ is the sign character.

\subsection{Irreducible characters of wreath products}

Let $G$ be any finite group, and let $G_n = G \wr S_n$.  The irreducible complex characters of $G_n$ were first described by Specht \cite{Sp32}, and the description we give in this section and the next follows \cite[Section 4.3]{JaKe81}, \cite[Chapter I, Appendix B]{Ma95}, and we have also found \cite[Section 4]{Mar12} helpful.

If $(\varrho, V)$ is a representation of $G$, then $\varrho^{\odot n}$ is a representation of $G^n$ acting on $V^{\otimes n}$.  We extend $\varrho^{\odot n}$ to a representation $\underline{\varrho^{\odot n}}$ of $G_n = G \wr S_n$ by defining, for $g = (g_1, \ldots, g_n) \in G^n$, $\tau \in S_n$, and $v_i \in V$, 
$$ \left( \underline{\varrho^{\odot n}} \right)(g, \tau)(v_1 \otimes \cdots \otimes v_n) = \left( \varrho(g_{1})v_{\tau^{-1}(1)} \otimes \cdots \otimes \varrho(g_{n}) v_{\tau^{-1}(n)} \right).$$
Given the representation $\pi^{\lambda}$, $\lambda \in \mcP_n$, of $S_n$, extend $\pi^{\lambda}$ to a representation $\underline{\pi^{\lambda}}$ of $G_n$ trivially, that is,
$$ \underline{\pi^{\lambda}}(g, \tau) = \pi^{\lambda}(\tau), \quad \text{ for } g \in G^n, \tau \in S_n.$$
Then, given a representation $\varrho$ of $G$ and $\lambda \in \mcP_n$, define the representation $\varrho \wr \lambda$ of $G_n$ by
$$ \varrho \wr \lambda = \left( \underline{\varrho^{\odot n}} \right) \otimes \underline{\pi^{\lambda}},$$
and if $\psi$ is the character of $\varrho$, we let $\psi \wr \lambda$ denote the character of $\varrho \wr \lambda$.

Suppose that $G$ has $r = |\Irr(G)|$ irreducible characters, and consider the set $\mcP_n[r]$ of $r$-multipartitions of $n$.  We would like to associate a partition to each $\psi \in \Irr(G)$, and while we could label each irreducible character of $G$ as $\psi_j$ for $1 \leq j \leq r$ and associate $\lambda(j)$ with $\psi_j$, we instead write $\lambda(\psi)$ for this partition in the general case for the sake of clarity.  That is, for the parameterization which we will describe, we write $[\lambda(\psi)] \in \mcP_n[r]$ with the understanding that each $\psi \in \Irr(G)$ is labelled by $1 \leq j \leq |\Irr(G)|$ in some order, and that $\lambda(\psi)$ is taken as $\lambda(j)$, where $\psi$ is labelled by $j$.

Given any $[\lambda(\psi)] \in \mcP_n[|\Irr(G)|]$, define the subgroup $G[\lambda(\psi)]$ of $G_n$ by
$$ G[\lambda(\psi)] = \prod_{\psi \in \Irr(G)} G_{|\lambda(\psi)|} = \prod_{\psi \in \Irr(G)} (G \wr S_{|\lambda(\psi)|}).$$
Then we have that 
$$ \bigodot_{\psi \in \Irr(G)} \psi \wr \lambda(\psi) \quad \text{is a character of} \;\; G[\lambda(\psi)].$$

We may now give the parameterization of irreducible characters of $G_n$.

\begin{theorem}[Specht \cite{Sp32}] \label{specht} The irreducible complex characters of $G_n$ may be parameterized by $\mcP_n[|\Irr(G)|]$, where for each $[\lambda(\psi)] \in \mcP_n[|\Irr(G)|]$, the irreducible character $\eta^{[\lambda(\psi)]}$ is given by
$$ \eta^{[\lambda(\psi)]} = \Ind_{G[\lambda(\psi)]}^{G_n} \left(\bigodot_{\psi \in \Irr(G)} \psi \wr \lambda(\psi) \right).$$
The degree of $\eta^{[\lambda(\psi)]}$ is given by
$$ \eta^{[\lambda(\psi)]}(1) = n! \prod_{\psi \in \Irr(H)} \frac{\left(\psi(1)^{|\lambda(\psi)|}\right) \left(\chi^{\lambda(\psi)}(1)\right)}{|\lambda(\psi)|!}.$$
\end{theorem}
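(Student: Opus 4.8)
The plan is to apply Clifford theory to the normal subgroup $G^n \trianglelefteq G_n$. First I would recall that $\Irr(G^n) = \{ \chi_1 \odot \cdots \odot \chi_n \mid \chi_i \in \Irr(G) \}$, and that the conjugation action of $S_n \leq G_n$ on $\Irr(G^n)$ permutes the tensor factors. Consequently the $G_n$-orbits on $\Irr(G^n)$ are indexed by choices of non-negative integers $a_\psi$, one for each $\psi \in \Irr(G)$, with $\sum_{\psi} a_\psi = n$; after grouping equal factors together (using the fixed labelling of $\Irr(G)$), a representative of the orbit attached to $(a_\psi)$ is $\chi_{(a_\psi)} = \bigodot_{\psi \in \Irr(G)} \psi^{\odot a_\psi}$, whose inertia group in $G_n$ is exactly $T := \prod_{\psi \in \Irr(G)} (G \wr S_{a_\psi})$. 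By the Clifford correspondence, induction $\Ind_T^{G_n}$ is a bijection from the irreducible characters of $T$ lying over $\chi_{(a_\psi)}$ onto the irreducible characters of $G_n$ lying over the $G_n$-orbit of $\chi_{(a_\psi)}$, and every element of $\Irr(G_n)$ arises this way for a unique orbit.

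The next step is to produce an extension of $\chi_{(a_\psi)}$ from $G^n$ to $T$. For a single block, the formula for $\underline{\varrho^{\odot m}}$ recorded in Section \ref{wreath} defines a representation of $G \wr S_m$ that restricts to $\varrho^{\odot m}$ on $G^m$; the only thing requiring verification is that this formula is a group homomorphism, which is a direct check using ${}^{\tau} g = (g_{\tau^{-1}(1)}, \ldots, g_{\tau^{-1}(m)})$. Taking the external tensor product of the block extensions shows that $\bigodot_{\psi} \underline{\psi^{\odot a_\psi}}$ extends $\chi_{(a_\psi)}$ to $T$. Since $\chi_{(a_\psi)}$ extends, Gallagher's theorem applies: $\sigma \mapsto \bigl( \bigodot_{\psi} \underline{\psi^{\odot a_\psi}} \bigr) \otimes \sigma$ is a bijection from $\Irr(T/G^n)$ onto the irreducible characters of $T$ lying over $\chi_{(a_\psi)}$. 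Now $T/G^n \cong \prod_{\psi} S_{a_\psi}$, whose irreducible representations are the $\bigodot_{\psi} \pi^{\lambda(\psi)}$ with $\lambda(\psi) \vdash a_\psi$; inflating to $T$ gives $\bigodot_{\psi} \underline{\pi^{\lambda(\psi)}}$, and forming the internal tensor product block by block gives $\bigodot_{\psi} \bigl( \underline{\psi^{\odot a_\psi}} \otimes \underline{\pi^{\lambda(\psi)}} \bigr) = \bigodot_{\psi} \psi \wr \lambda(\psi)$.

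Assembling the pieces: the combined data of an orbit (the sizes $a_\psi$) together with a choice of $\lambda(\psi) \vdash a_\psi$ for each $\psi$ is exactly the data of an $r$-multipartition $[\lambda(\psi)] \in \mcP_n[r]$ with $r = |\Irr(G)|$, in which case $a_\psi = |\lambda(\psi)|$ and $T = G[\lambda(\psi)]$. Applying the Clifford correspondence $\Ind_{G[\lambda(\psi)]}^{G_n}$ therefore produces precisely the character $\eta^{[\lambda(\psi)]}$ in the statement; since distinct orbits contribute disjoint families and every irreducible character of $G_n$ lies over some orbit, this list is complete and irredundant. (One may sanity-check this against the fact that the number of $r$-multipartitions of $n$ equals the number of conjugacy classes of $G_n$.) The degree formula then follows from $\eta^{[\lambda(\psi)]}(1) = [G_n : G[\lambda(\psi)]] \cdot \prod_{\psi} (\psi \wr \lambda(\psi))(1)$, together with $[G_n : G[\lambda(\psi)]] = n!/\prod_{\psi} |\lambda(\psi)|!$ (coming from $|G_n| = |G|^n\, n!$ and $|G[\lambda(\psi)]| = |G|^n \prod_{\psi} |\lambda(\psi)|!$) and $(\psi \wr \lambda(\psi))(1) = \psi(1)^{|\lambda(\psi)|}\, \chi^{\lambda(\psi)}(1)$, the latter being immediate since $\psi \wr \lambda(\psi) = \bigl( \underline{\varrho^{\odot |\lambda(\psi)|}} \bigr) \otimes \underline{\pi^{\lambda(\psi)}}$ (with $\varrho$ affording $\psi$) acts on a space of dimension $\psi(1)^{|\lambda(\psi)|} \chi^{\lambda(\psi)}(1)$.

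I expect the only genuinely substantive step to be the construction of the block extension $\underline{\psi^{\odot m}}$ of $\psi^{\odot m}$ to $G \wr S_m$ (equivalently, checking that the formula of Section \ref{wreath} is a homomorphism restricting correctly to $G^m$): once this extension is in hand, Gallagher's theorem and the Clifford correspondence serve as black boxes. The remaining effort — matching the orbit and inertia-group bookkeeping with $r$-multipartitions and reading off degrees — is routine.
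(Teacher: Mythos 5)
Your argument is correct: the paper does not prove this statement but cites it to Specht, with the description following \cite[Section 4.3]{JaKe81} and \cite[Chapter I, Appendix B]{Ma95}, and your Clifford-theoretic proof (orbits of $S_n$ on $\Irr(G^n)$, inertia group $\prod_{\psi}(G \wr S_{|\lambda(\psi)|})$, the block extension $\underline{\psi^{\odot m}}$, Gallagher, then the Clifford correspondence and the index/degree count) is exactly the standard argument given in those references. The only stray detail worth noting is in the statement itself, not your proof: the product in the degree formula should run over $\Irr(G)$, not $\Irr(H)$.
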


\subsection{Symmetric functions}

Macdonald \cite[Chapter 1, Appendix B]{Ma95} gives a method of describing the irreducible characters of $G_n$ using symmetric function theory, which is an extension of the description of the character theory of the symmetric groups using symmetric functions \cite[Chapter 1.8]{Ma95}.  Another detailed treatment of symmetric functions associated with wreath products is given by Ingram, Jing, and Stitzinger \cite{InJiSt09}.

For each $\psi \in \Irr(G)$, let $Y_{\psi} = \{y_{i\psi} \, \mid \, i \geq 1 \}$ be an infinite set of indeterminates, where any two $y_{i \psi}$ commute.  For any partition $\lambda$, let $s_{\lambda}(Y_{\psi})$ be the Schur symmetric function in these variables.  For any sequence $[\lambda(\psi)] \in \mcP_n[|\Irr(G)|]$, define 
$$ s_{[\lambda(\psi)]} = \prod_{\psi \in \Irr(G)} s_{\lambda(\psi)}(Y_{\psi}).$$
Let $\Lambda_n = \C\spn\{ s_{[\lambda(\psi)]} \, \mid \, [\lambda(\psi)] \in \mcP_n[|\Irr(G)|] \}$ and $\Lambda = \oplus_n \Lambda_n$.  Then $\Lambda$ is a graded $\C$-algebra, where multiplication is standard multiplication of symmetric functions coming from polynomial multiplication.  Let $R(G_n)$ be defined as the space of $\C$-valued class functions of $G_n$, and let $R = \oplus_n R(G_n)$.  For any $m, n \geq 1$, we may embed $G_m \times G_n$ in $G_{m+n}$ by embedding $S_m \times S_n$ in $S_{m+n}$ as we did in Section \ref{Kostka}, and we identify $G_m \times G_n$ as a subgroup of $G_{m+n}$ in this way.  For $\alpha \in R(G_n)$ and $\beta \in R(G_m)$, define the product $\alpha \beta \in R(G_{m+n})$ by the induced class function 
$$\alpha \beta = \Ind_{G_m \times G_n}^{G_{m+n}}(\alpha \times \beta).$$
This multiplication makes $R$ a graded $\C$-algebra.  We only need parts of the main result in \cite[Chapter 1, Appendix B]{Ma95}, and the following is a portion of \cite[I.B, (9.7)]{Ma95}.

\begin{theorem} \label{WreathChar} Let $\eta^{[\lambda(\psi)]}$ be the irreducible character of $G_n = G \wr S_n$ corresponding to $[\lambda(\psi)] \in \mcP_n[|\Irr(G)|]$.  There is an isomorphism of graded $\C$-algebras
$$ \ch : R \rightarrow \Lambda,$$
which we may define by $\ch(\eta^{[\lambda(\psi)]}) = s_{[\lambda(\psi)]}$, and extend linearly.  In particular, for any $\alpha, \beta \in R$, $\ch(\alpha \beta) = \ch(\alpha)\ch(\beta)$.
\end{theorem}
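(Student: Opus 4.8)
The plan is to dispose of the linear-isomorphism part quickly and then concentrate on multiplicativity, which is the real content. In each degree $n$, Theorem \ref{specht} says that $\{\eta^{[\lambda(\psi)]} \mid [\lambda(\psi)] \in \mcP_n[|\Irr(G)|]\}$ is an orthonormal basis of $R(G_n)$, and the products $s_{[\lambda(\psi)]} = \prod_{\psi} s_{\lambda(\psi)}(Y_\psi)$, as $[\lambda(\psi)]$ runs over $\mcP_n[|\Irr(G)|]$, are linearly independent, since the $Y_\psi$ are pairwise disjoint sets of variables and the Schur functions in each $Y_\psi$ are linearly independent. Hence these products form a basis of $\Lambda_n$, so $\ch$, which by definition carries one basis onto the other degree by degree, is a graded linear isomorphism. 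Because multiplication in $R$ and in $\Lambda$ is bilinear, it then suffices to check $\ch\bigl(\eta^{[\lambda(\psi)]} \cdot \eta^{[\mu(\psi)]}\bigr) = s_{[\lambda(\psi)]} s_{[\mu(\psi)]}$ for all pairs of multipartitions, where $\eta^{[\lambda(\psi)]} \in R(G_m)$ and $\eta^{[\mu(\psi)]} \in R(G_n)$.

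Next I would peel the product apart one irreducible character $\psi \in \Irr(G)$ at a time. Write $a_\psi = |\lambda(\psi)|$ and $b_\psi = |\mu(\psi)|$. Using transitivity of induction along $G[\lambda(\psi)] \times G[\mu(\psi)] \leq \prod_\psi (G_{a_\psi} \times G_{b_\psi}) \leq \prod_\psi G_{a_\psi + b_\psi} \leq G_{m+n}$, together with the compatibility of induction with external tensor products over direct-product subgroups, one rewrites $\eta^{[\lambda(\psi)]} \cdot \eta^{[\mu(\psi)]}$ as
$$ \Ind_{\prod_\psi G_{a_\psi + b_\psi}}^{G_{m+n}} \left( \bigodot_{\psi} \Ind_{G_{a_\psi} \times G_{b_\psi}}^{G_{a_\psi + b_\psi}} \bigl( (\psi \wr \lambda(\psi)) \times (\psi \wr \mu(\psi)) \bigr) \right). $$
On the other side, $s_{[\lambda(\psi)]} s_{[\mu(\psi)]} = \prod_\psi s_{\lambda(\psi)}(Y_\psi) s_{\mu(\psi)}(Y_\psi)$, and $\Ind_{\prod_\psi G_{a_\psi + b_\psi}}^{G_{m+n}}\bigl(\bigodot_\psi (\psi \wr \nu(\psi))\bigr)$ is precisely $\eta^{[\nu(\psi)]}$ when $|\nu(\psi)| = a_\psi + b_\psi$ for every $\psi$. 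So, expanding each Schur product and matching the two sides term by term, the whole theorem reduces to the single-$\psi$ identity
$$ \Ind_{G_a \times G_b}^{G_{a+b}} \bigl( (\psi \wr \lambda) \times (\psi \wr \mu) \bigr) = \sum_{\nu} c^{\nu}_{\lambda\mu} \,(\psi \wr \nu), \qquad a = |\lambda|, \;\; b = |\mu|, $$
where the $c^{\nu}_{\lambda\mu}$ are the Littlewood--Richardson coefficients appearing in $s_\lambda(Y_\psi) s_\mu(Y_\psi) = \sum_\nu c^{\nu}_{\lambda\mu} s_\nu(Y_\psi)$.

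Proving this single-$\psi$ identity is where the real work is, and the idea is to separate the ``diagonal'' tensor factor $\underline{\varrho^{\odot n}}$ from the $\underline{\pi^{\lambda}}$ factor. Regrouping tensor factors, $(\psi \wr \lambda) \times (\psi \wr \mu)$ equals, as a character of $G_a \times G_b$, the internal tensor product $\bigl( \underline{\varrho^{\odot a}} \times \underline{\varrho^{\odot b}} \bigr) \otimes \bigl( \underline{\pi^{\lambda}} \times \underline{\pi^{\mu}} \bigr)$, and one checks straight from the definitions that $\underline{\varrho^{\odot a}} \times \underline{\varrho^{\odot b}}$ is exactly the restriction of $\underline{\varrho^{\odot(a+b)}}$ to $G_a \times G_b$, because $\varrho^{\odot a} \odot \varrho^{\odot b} = \varrho^{\odot(a+b)}$ on $G^{a+b}$ and the $S_a \times S_b$ action on $V^{\otimes(a+b)}$ is the restriction of the $S_{a+b}$ action. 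The projection formula then yields
$$ \Ind_{G_a \times G_b}^{G_{a+b}} \bigl( (\psi \wr \lambda) \times (\psi \wr \mu) \bigr) = \underline{\varrho^{\odot(a+b)}} \otimes \Ind_{G_a \times G_b}^{G_{a+b}} \bigl( \underline{\pi^{\lambda}} \times \underline{\pi^{\mu}} \bigr), $$
and since $\underline{\pi^{\lambda}} \times \underline{\pi^{\mu}}$ is inflated from $S_a \times S_b = (G_a \times G_b)/G^{a+b}$, induction commutes with inflation, so $\Ind_{G_a \times G_b}^{G_{a+b}}\bigl(\underline{\pi^{\lambda}} \times \underline{\pi^{\mu}}\bigr)$ is the trivial extension to $G_{a+b}$ of $\Ind_{S_a \times S_b}^{S_{a+b}}(\pi^{\lambda} \times \pi^{\mu})$. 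By the classical characteristic map for the symmetric groups \cite[Chapter 1.8]{Ma95} (the case $G = 1$ of the present theorem), this last induced character is $\bigoplus_\nu c^{\nu}_{\lambda\mu}\, \pi^{\nu}$, and tensoring back with $\underline{\varrho^{\odot(a+b)}}$ gives $\sum_\nu c^{\nu}_{\lambda\mu}\,(\psi \wr \nu)$, completing the argument. I expect the main obstacle to be not any single deep step but the bookkeeping: tracking the several nested inductions in stages and the interaction of the diagonal factor $\underline{\varrho^{\odot n}}$ with the permutation factor $\underline{\pi^{\lambda}}$. A more uniform but more setup-intensive route would be to realize $\ch$ on the class-function side using power sums $p_\rho(Y_\psi)$ indexed by the $\Irr(G)$-valued partition data parameterizing conjugacy classes of $G_n$, as in \cite[Chapter 1, Appendix B]{Ma95}, where multiplicativity falls out directly from the formula for power sums under induction from $G_m \times G_n$.
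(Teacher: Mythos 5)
Your argument is correct, but it is worth noting that the paper does not prove this statement at all: Theorem \ref{WreathChar} is quoted as a portion of \cite[I.B, (9.7)]{Ma95}, where Macdonald constructs $\ch$ via power sums attached to the conjugacy-class data of $G\wr S_n$ and establishes it as an isometric isomorphism of graded algebras (the route you mention at the end as the ``more setup-intensive'' alternative). What you give instead is a self-contained verification of exactly the part of (9.7) the paper uses: linear bijectivity is immediate since both $\{\eta^{[\lambda(\psi)]}\}$ and $\{s_{[\lambda(\psi)]}\}$ are bases in each degree, and multiplicativity on basis elements reduces, via transitivity of induction and compatibility of induction with external tensor products, to the single-$\psi$ identity $\Ind_{G_a\times G_b}^{G_{a+b}}\bigl((\psi\wr\lambda)\times(\psi\wr\mu)\bigr)=\sum_{\nu}c^{\nu}_{\lambda\mu}\,(\psi\wr\nu)$, which you prove by the projection formula (using that $\underline{\varrho^{\odot a}}\times\underline{\varrho^{\odot b}}$ is the restriction of $\underline{\varrho^{\odot(a+b)}}$), the commutation of induction with inflation through the base group $G^{a+b}$, and the classical Littlewood--Richardson decomposition of $\Ind_{S_a\times S_b}^{S_{a+b}}(\pi^{\lambda}\times\pi^{\mu})$. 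Each step is standard and the reduction is sound; the only point you should make explicit is that the subgroup $G[\lambda(\psi)]\times G[\mu(\psi)]$ and the group $\prod_{\psi}(G_{a_\psi}\times G_{b_\psi})$ sit in $G_{m+n}$ via different block arrangements, so the ``induction in stages'' really passes through a conjugate subgroup, which is harmless since induced characters are invariant under simultaneous conjugation of the subgroup and the character. The trade-off between the two approaches: citing Macdonald buys the full structure (inner products, power-sum bases, adjointness with restriction) with no work, while your direct LR-based computation proves precisely the multiplicativity needed for Lemma \ref{Hdecomp} and Theorem \ref{YoungWreath} with only classical symmetric-group facts, so it would make the paper more self-contained at modest cost.
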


For any $|\Irr(G)|$-multipartition $[\lambda(\psi)]$, define the \emph{height} of the multipartition as
$$ \hgt([\lambda(\psi)]) = \mathrm{max} \{ \ell(\lambda(\psi)) \, \mid \, \psi \in \Irr(G) \}.$$
In particular, $\hgt([\gamma(\psi)]) = 1$ if and only if each $\gamma(\psi)$ is either the empty partition or has a single part (and at least one $\gamma(\psi)$ is not the empty partition).

For any $m \geq 1$ and any subset $\mcS \subseteq \Irr(G)$, define the element $H_m^{(\mcS)} \in \Lambda_m$ as 
$$ H_m^{(\mcS)}  = \sum_{|[\gamma(\psi)]| = m \atop {\hgt([\gamma(\psi)]) = 1 \atop{\gamma(\psi) = \varnothing \text{ if } \psi \not\in \mcS}}} s_{[\gamma(\psi)]}.$$
For any partition $\mu = (\mu_1, \mu_2, \ldots, \mu_l)$ of $n$, define $H_{\mu}^{(\mcS)} \in \Lambda_n$ as
$$ H_{\mu}^{(\mcS)} = H_{\mu_1}^{(\mcS)} H_{\mu_2}^{(\mcS)} \cdots H_{\mu_l}^{(\mcS)}.$$
We now prove a generalization of \eqref{KostkaSymm} in the algebra $\Lambda$.  This result seems to be equivalent to \cite[Corollary 4.6]{InJiSt09}, although we give a proof here for the purposes of self-containment.

\begin{lemma} \label{Hdecomp}
For any partition $\mu$ of $n$, we have
$$ H_{\mu}^{(\mcS)} = \sum_{|[\lambda(\psi)]| = n \atop{ \lambda(\psi) = \varnothing \text{ if } \psi \not\in \mcS}} K_{[\lambda(\psi)] \mu} s_{[\lambda(\psi)]}.$$
\end{lemma}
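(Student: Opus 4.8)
The plan is to mimic the classical derivation of \eqref{KostkaSymm} via Pieri's rule, but carried out inside the graded algebra $\Lambda$ using the isomorphism $\ch$ of Theorem \ref{WreathChar}. The first step is to identify $H_m^{(\mcS)}$ explicitly under $\ch^{-1}$. Since the sum defining $H_m^{(\mcS)}$ is over all $[\gamma(\psi)]$ of size $m$ with $\hgt([\gamma(\psi)]) = 1$ and support inside $\mcS$, each such $[\gamma(\psi)]$ is determined by choosing, for each $\psi \in \mcS$, a single part $m_\psi \ge 0$ with $\sum_{\psi \in \mcS} m_\psi = m$, and $s_{[\gamma(\psi)]} = \prod_{\psi} s_{(m_\psi)}(Y_\psi)$. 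Recalling $s_{(m_\psi)}(Y_\psi) = h_{m_\psi}(Y_\psi)$ (the complete homogeneous symmetric function) and $\ch(\eta^{[\gamma(\psi)]}) = s_{[\gamma(\psi)]}$, one checks via Theorem \ref{specht} that $H_m^{(\mcS)}$ is precisely $\ch$ of the induced character $\Ind$ of the appropriate trivial-type character; but more to the point, I only need the symmetric-function identity, so I would just record that $H_m^{(\mcS)} = \sum_{\sum_\psi m_\psi = m} \prod_{\psi \in \mcS} h_{m_\psi}(Y_\psi)$, i.e. $H_m^{(\mcS)}$ is the degree-$m$ piece of $\prod_{\psi \in \mcS}\big(\sum_{k \ge 0} h_k(Y_\psi)\big)$.

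Next I would set up the induction on $\ell(\mu)$, exactly paralleling the proof of \eqref{KostkaSymm}. The base case $\mu = (m)$ is the computation just described: $H_{(m)}^{(\mcS)} = \sum_{[\lambda(\psi)]} K_{[\lambda(\psi)](m)} s_{[\lambda(\psi)]}$, since $K_{[\lambda(\psi)](m)}$ counts the multitableaux of total weight $(m)$, which by Proposition \ref{compdecomp} equals $\sum_{\sum_\psi m_\psi = m}\prod_\psi K_{\lambda(\psi)(m_\psi)}$, and $K_{\lambda(\psi)(m_\psi)}$ is $1$ if $\lambda(\psi) = (m_\psi)$ and $0$ otherwise. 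For the inductive step, write $H_\mu^{(\mcS)} = H_{\mu_1}^{(\mcS)} \cdot H_{\mu'}^{(\mcS)}$ where $\mu' = (\mu_2, \ldots, \mu_l)$, apply the inductive hypothesis to $H_{\mu'}^{(\mcS)}$, and then multiply each $s_{[\lambda'(\psi)]}$ by $H_{\mu_1}^{(\mcS)}$. Expanding $H_{\mu_1}^{(\mcS)} \cdot s_{[\lambda'(\psi)]} = \big(\prod_\psi \sum_k h_k(Y_\psi)\big)_{\deg \mu_1} \cdot \prod_\psi s_{\lambda'(\psi)}(Y_\psi)$ and distributing, Pieri's rule \cite[I.5.16]{Ma95} applied in each variable set $Y_\psi$ separately gives $h_{m_\psi}(Y_\psi) s_{\lambda'(\psi)}(Y_\psi) = \sum_{\nu(\psi)} s_{\nu(\psi)}(Y_\psi)$, the sum over $\nu(\psi)$ with $\nu(\psi) - \lambda'(\psi)$ a horizontal $m_\psi$-strip. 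Collecting, the coefficient of a given $s_{[\lambda(\psi)]}$ in $H_\mu^{(\mcS)}$ becomes $\sum_{[\lambda'(\psi)]} K_{[\lambda'(\psi)]\mu'} \cdot \#\{(m_\psi)_\psi : \sum m_\psi = \mu_1,\ \lambda(\psi)-\lambda'(\psi) \text{ a horizontal } m_\psi\text{-strip}\}$, which by the horizontal-strip description of semistandard tableaux and Proposition \ref{compdecomp} is exactly $K_{[\lambda(\psi)]\mu}$, the number of ways to build $[\lambda(\psi)]$ from the empty multipartition by successive horizontal strips of total sizes $\mu_l, \ldots, \mu_1$. The support condition $\lambda(\psi) = \varnothing$ for $\psi \notin \mcS$ is preserved throughout since strips are only added in variable sets $Y_\psi$ with $\psi \in \mcS$.

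I expect the main obstacle to be purely bookkeeping: keeping the indexing of the multipartitions straight while distributing the product and invoking Pieri's rule simultaneously across all the variable sets $Y_\psi$, and matching the resulting count of "towers of horizontal strips, one tower per $\psi$, with prescribed per-row total sizes" against the multitableau count $K_{[\lambda(\psi)]\mu}$ via Proposition \ref{compdecomp}. A clean way to handle this is to avoid the inductive Pieri argument entirely on the combinatorial side: simply observe that $H_\mu^{(\mcS)}$, being a product of $H_{\mu_i}^{(\mcS)}$'s each of which is a sum of products $\prod_\psi h_{k}(Y_\psi)$, expands as $\sum \prod_\psi h_{\omega(\psi)}(Y_\psi)$ over all choices of compositions $\omega(\psi)$ (of length $l$, supported in $\mcS$) with $\sum_\psi \omega(\psi) = \mu$ coordinatewise; then apply the single-variable identity \eqref{KostkaSymm} in each $Y_\psi$ to get $\prod_\psi h_{\omega(\psi)}(Y_\psi) = \prod_\psi \sum_{\lambda(\psi)} K_{\lambda(\psi)\omega(\psi)} s_{\lambda(\psi)}(Y_\psi)$, and finally collect coefficients and quote Proposition \ref{compdecomp} together with the permutation-invariance from Corollary \ref{PermInvar} (to pass from compositions $\omega(\psi)$ back to the statement as phrased). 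This reduces the whole argument to \eqref{KostkaSymm}, Proposition \ref{compdecomp}, and elementary manipulation of the defining sums, which I would present as the actual proof.
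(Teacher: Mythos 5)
Your final ``clean'' version---expanding $H_{\mu}^{(\mcS)}$ as a sum of products $\prod_{\psi} h_{\omega(\psi)}(Y_{\psi})$ over compositions $\omega(\psi)$ with $\sum_{\psi}\omega(\psi)=\mu$, applying \eqref{KostkaSymm} in each variable set $Y_{\psi}$, and collecting via Proposition \ref{compdecomp}---is essentially identical to the paper's proof of Lemma \ref{Hdecomp}, and it is correct. The preliminary Pieri-induction sketch is an unnecessary detour, and the passage from composition to partition weights is justified by the invariance noted in Section \ref{Kostka} (rather than Corollary \ref{PermInvar}), but these are cosmetic points.
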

\begin{proof} Each factor $H^{(\mcS)}_{\mu_i}$ consists of summands $s_{[\gamma(\psi)]}$, such that each partition $\gamma(\psi)$ is either empty or a single row, so write $\gamma(\psi) = (\gamma(\psi)_1)$, where we take $\gamma(\psi)_1 = 0$ when $\gamma(\psi) = \varnothing$.  Then we have
$$ s_{[\gamma(\psi)]} = \prod_{\psi \in \Irr(G)} s_{(\gamma(\psi)_1)} (Y_{\psi}) = \prod_{\psi \in \Irr(G)} h_{\gamma(\psi)_1}(Y_{\psi}),$$
where $h_k$ is the complete symmetric function.  Now we have
\begin{align*}
H_{\mu}^{(\mcS)}  &= \prod_{i = 1}^l H_{\mu_i}^{(\mcS)} = \prod_{i=1}^l \left( \sum_{|[\gamma^{(i)}(\psi)]| = \mu_i \atop {\hgt([\gamma^{(i)}(\psi)]) = 1 \atop{\gamma^{(i)}(\psi) = \varnothing \text{ if } \psi \not\in \mcS}}} \prod_{\psi \in \Irr(G)} h_{\gamma^{(i)}(\psi)_1} (Y_{\psi}) \right) \\
& = \sum_{ 1 \leq i \leq l \atop{|[\gamma^{(i)}(\psi)]| = \mu_i \atop {\hgt([\gamma^{(i)}(\psi)]) = 1 \atop{\gamma^{(i)}(\psi) = \varnothing \text{ if } \psi \not\in \mcS}}}} \prod_{\psi \in \Irr(G)} h_{\gamma^{(1)}(\psi)_1} (Y_{\psi}) \cdots h_{\gamma^{(l)}(\psi)_1} (Y_{\psi}) \\
& = \sum_{ 1 \leq i \leq l \atop{|[\gamma^{(i)}(\psi)]| = \mu_i \atop {\hgt([\gamma^{(i)}(\psi)]) = 1 \atop{\gamma^{(i)}(\psi) = \varnothing \text{ if } \psi \not\in \mcS}}}} \prod_{\psi \in \Irr(G)} \sum_{ \lambda(\psi) \in \mcP_{|\omega(\psi)|}} K_{\lambda(\psi) \omega(\psi)} s_{\lambda(\psi)} (Y_{\psi}),
\end{align*}
where we take $\omega(\psi)$ to be the composition $\omega(\psi) = (\gamma^{(1)}(\psi)_1, \ldots, \gamma^{(l)}(\psi)_1)$, and we have applied \eqref{KostkaSymm}.  Note that we have $\sum_{\psi} \omega(\psi) = \mu \vdash n$.  We may now rearrange sums and products to obtain
\begin{align*}
H_{\mu}^{(\mcS)} & = \sum_{|[\lambda(\psi)]| = n \atop{ \lambda(\psi) = \varnothing \text{ if } \psi \not\in \mcS}}  \left( \sum_{ \sum_{\psi} \omega(\psi)  = \mu \atop{ |\omega(\psi)| = |\lambda(\psi)|}} \prod_{\psi \in \Irr(G)} K_{\lambda(\psi) \omega(\psi)} \right) s_{[\lambda(\psi)]} \\
& = \sum_{|[\lambda(\psi)]| = n \atop{ \lambda(\psi) = \varnothing \text{ if } \psi \not\in \mcS}} K_{[\lambda(\psi)] \mu} s_{[\lambda(\psi)]},
\end{align*}
where we have applied Proposition \ref{compdecomp} to obtain the last equality.
 \end{proof}

\subsection{Permutation characters of $G_n$ for $G$ abelian}

We now take $G$ to be a finite abelian group, and $G_n = G \wr S_n$.  So, each $\psi \in \Irr(G)$ satisfies $\psi(1) = 1$, and so for any $\eta^{[\lambda(\psi)]} \in \Irr(G_n)$ it follows from Theorem \ref{specht} that
$$ \eta^{[\lambda(\psi)]}(1) = n! \prod_{\psi \in \Irr(G)} \frac{\chi^{\lambda(\psi)}(1)}{|\lambda(\psi)|!}.$$
Let $L \leq G$ be any subgroup of $G$, and consider $L_n = L \wr S_n$ as a subgroup of $G_n$.  The decomposition of the permutation character of $G_n$ on $L_n$ is as follows.

\begin{proposition} \label{PermDecomp} Let $G$ be a finite abelian group and $L \leq G$ a subgroup, so $L_n \leq G_n$.  Then
$$\Ind_{L_n}^{G_n}(\bone) = \sum_{|[\gamma(\psi)]| = n \atop {\hgt([\gamma(\psi)]) = 1 \atop{\gamma(\psi) = \varnothing \text{ if } L \not\subseteq \mathrm{ker}(\psi)}}} \eta^{[\gamma(\psi)]}.$$
\end{proposition}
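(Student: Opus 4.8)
The plan is to compute the permutation character $\Ind_{L_n}^{G_n}(\bone)$ by transporting the question to the symmetric function side via the characteristic map $\ch$ of Theorem~\ref{WreathChar}, and then to recognize the answer using Lemma~\ref{Hdecomp}. First I would set $\mcS = \{\psi \in \Irr(G) \mid L \subseteq \mathrm{ker}(\psi)\}$, so that the sum on the right-hand side of the claimed identity is precisely the set of $[\gamma(\psi)]$ of size $n$ and height $1$ with $\gamma(\psi) = \varnothing$ for $\psi \not\in \mcS$; in the notation of the previous subsection this is exactly $H_n^{(\mcS)}$ after applying $\ch$. So the goal becomes the clean statement
$$ \ch\!\left(\Ind_{L_n}^{G_n}(\bone)\right) = H_n^{(\mcS)}. $$
Because $\ch$ is an algebra isomorphism sending the induction product on $R$ to polynomial multiplication, and because $H_n^{(\mcS)}$ factors (almost) as a product of the degree-one pieces $H_1^{(\mcS)}$ up to the combinatorics already handled in Lemma~\ref{Hdecomp}, the natural route is to first establish the rank-one case $n = 1$ and then bootstrap.

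The key steps, in order, are as follows. (i) Handle $n=1$: here $G_1 = G$, $L_1 = L$, and $\Ind_L^G(\bone) = \sum_{L \subseteq \mathrm{ker}(\psi)} \psi$ since $G$ is abelian (the characters of $G$ trivial on $L$ are exactly those inflated from $G/L$, and they sum to the permutation character on the coset space). Under $\ch$, each $\psi \in \Irr(G) = \Irr(G_1)$ corresponds to $s_{[(1)_\psi]}$, i.e.\ the multipartition with a single box in coordinate $\psi$; summing over $\psi \in \mcS$ gives exactly $H_1^{(\mcS)}$. (ii) Observe that $L_n \leq G_n$ contains $L_1^{\,n} \rtimes S_n$, and more to the point that $L_n = (L \wr S_n)$ sits inside $G_n$ compatibly with the product structure: restricting from $G_n$ to $G_1 \times \cdots \times G_1$ (the base-group factors) and inducing is governed by the same Mackey/induction-in-stages bookkeeping that underlies the algebra structure on $R$. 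Concretely, I would show $\Ind_{L_n}^{G_n}(\bone)$ equals the $n$-fold product (in the graded algebra $R$) of $\Ind_{L}^{G}(\bone)$ with itself, divided appropriately — or more cleanly, that $\Ind_{L_n}^{G_n}(\bone) = \Ind_{G_1 \wr S_n}^{G_n}\big(\Ind_{L \wr S_n}^{G \wr S_n}(\bone)\big)$ reduces, via the fact that $\bone$ on $L \wr S_n$ is the "$S_n$-symmetrization" of $\bone^{\odot n}$ on $L^n$, to applying $\ch$ and using step (i) together with the identity $H_n^{(\mcS)} = \sum_{\mu \vdash n} (\text{mult.}) \, H_\mu^{(\mcS)}$-type expansion. (iii) Apply $\ch$, use that it is multiplicative, plug in step (i), and read off that $\ch(\Ind_{L_n}^{G_n}(\bone))$ is the degree-$n$ part of $\big(\sum_{\psi \in \mcS} s_{[(1)_\psi]}\big)$ suitably symmetrized, which is $H_n^{(\mcS)}$ by its very definition as the sum of $s_{[\gamma(\psi)]}$ over height-one $[\gamma(\psi)]$ supported on $\mcS$. (iv) Invert $\ch$ to conclude.

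I expect the main obstacle to be step (ii): making precise the claim that $\Ind_{L_n}^{G_n}(\bone)$ is the image under $\ch^{-1}$ of $H_n^{(\mcS)}$, i.e.\ correctly tracking how the trivial character of the \emph{wreath product} $L \wr S_n$ (not merely of the base group $L^n$) behaves under induction to $G \wr S_n$. The cleanest way I see to finesse this is to avoid Mackey theory entirely and instead argue on characteristic-map images: the trivial character of $G_m$ has characteristic $H_m^{(\Irr G)}$ (this is the $\mcS = \Irr(G)$, $\mu = (m)$ special case of the structure behind Lemma~\ref{Hdecomp}, or a direct check), so one wants the relative statement that inducing the trivial character of $L_n$ picks out exactly the $\psi$'s trivial on $L$. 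For this it suffices to compute the inner product $\langle \Ind_{L_n}^{G_n}(\bone), \eta^{[\gamma(\psi)]}\rangle = \langle \bone, \Res_{L_n}^{G_n} \eta^{[\gamma(\psi)]}\rangle$ by Frobenius reciprocity, and to check it equals $1$ when $[\gamma(\psi)]$ has height one and is supported on $\mcS$ and $0$ otherwise — the restriction of $\eta^{[\gamma(\psi)]} = \Ind_{G[\gamma(\psi)]}^{G_n}(\odot_\psi \psi \wr \gamma(\psi))$ to $L_n$ has trivial constituents governed entirely by whether each $\psi$ appearing is trivial on $L$, and the height-one condition ensures no higher symmetric-group representation-theoretic obstruction enters. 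Once that inner-product computation is done the decomposition follows immediately. Alternatively, and perhaps most efficiently for a self-contained writeup, one invokes Lemma~\ref{Hdecomp} with $\mu = (n)$ — noting $H_n^{(\mcS)} = H_{(n)}^{(\mcS)}$ — so that $\ch(\Ind_{L_n}^{G_n}(\bone)) = H_{(n)}^{(\mcS)} = \sum K_{[\lambda(\psi)](n)} s_{[\lambda(\psi)]}$, and since $K_{[\lambda(\psi)](n)} = 1$ exactly when $[\lambda(\psi)]$ has height one (a single horizontal $n$-strip distributed among the coordinates) and $0$ otherwise, this collapses to the stated sum over height-one multipartitions supported on $\mcS$.
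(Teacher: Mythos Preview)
Your main proposed route---proving $\ch(\Ind_{L_n}^{G_n}(\bone)) = H_n^{(\mcS)}$ by bootstrapping from $n=1$ via the multiplicativity of $\ch$---has a real gap at step (ii), which you yourself flag. The product in $R$ of $n$ copies of $\Ind_L^G(\bone) \in R(G_1)$ is $\Ind_{G^n}^{G_n}\big((\Ind_L^G \bone)^{\odot n}\big) = \Ind_{L^n}^{G_n}(\bone)$, not $\Ind_{L_n}^{G_n}(\bone)$; the symmetric group in $L_n = L \wr S_n$ is precisely what the product structure on $R$ does not supply. So the multiplicativity of $\ch$ alone cannot carry you from $n=1$ to general $n$. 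Moreover, in the paper's logic the identity $\ch(\Ind_{L_m}^{G_m}(\bone)) = H_m^{(\mcS)}$ is \emph{deduced from} this Proposition (it is the first line of the proof of Theorem \ref{YoungWreath}), so using it to prove the Proposition would be circular in context.

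Your fallback---Frobenius reciprocity to compute $\langle \Ind_{L_n}^{G_n}(\bone), \eta^{[\gamma(\psi)]}\rangle$ directly---is essentially what the paper does, but the paper organizes it more efficiently. Rather than verify that the inner product is $0$ for every $[\gamma(\psi)]$ not of the claimed form (which can be done but requires separate arguments for ``height $>1$'' and ``support outside $\mcS$''), the paper first checks that the total degree on the right equals $[G_n:L_n] = |G/L|^n$ via a multinomial identity, and then shows only that each claimed $\eta^{[\gamma(\psi)]}$ appears with multiplicity at least $1$. The latter is done by Frobenius reciprocity together with Mackey's theorem: one observes there is a single $(G[\gamma(\psi)], L_n)$ double coset in $G_n$ (since $G[\gamma(\psi)] \supseteq G^n$ and $L_n \supseteq S_n$), so the inner product reduces to $\langle \bone, \Res_{L[\gamma(\psi)]} \xi^{[\gamma(\psi)]}\rangle$, and this equals $1$ because each $\psi \wr \gamma(\psi)$ is identically $1$ on $L \wr S_{|\gamma(\psi)|}$ when $L \subseteq \ker(\psi)$ and $\gamma(\psi)$ is a single row. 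The degree equality then forces all multiplicities to be exactly $1$ and all other irreducibles to be absent.
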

\begin{proof} We first show that the character degrees on both sides are the same.  The degree of $\Ind_{L_n}^{G_n}(\bone)$ is $[G_n : L_n] = |G/L|^n$.  Consider one character $\eta^{|\gamma(\psi)|}$ in the sum on the right.  Since each $\gamma(\psi)$ has a single part or is empty, then $\chi^{\gamma(\psi)}(1) = 1$.  Since $L \subseteq \mathrm{ker}(\psi)$ whenever $\gamma(\psi) \neq \varnothing$, then we may think of $\psi$ as a character of $G/L$, and the multipartitions we are considering may be thought of as $|\Irr(G/L)|$-multipartitions.  We now have
$$ \eta^{|\gamma(\psi)|}(1) = \frac{n!}{\prod_{\psi \in \Irr(G/L)} |\gamma(\psi)|!},$$
which is a multinomial coefficient.  In summing the degrees of these characters, we are summing over all possible multinomial coefficients of degree $n$ with $|G/L|$ terms, which has sum $|G/L|^n$.

Since the degrees are equal, it is now enough to show that any character $\eta^{[\gamma(\psi)]}$ in the sum appears with nonzero multiplicity in the permutation character.  Indeed, we will show that
\begin{equation} \label{want}
\left \langle \Ind_{L_n}^{G_n}(\bone), \eta^{[\gamma(\psi)]} \right \rangle = 1.
\end{equation}
From Theorem \ref{specht}, we have that
$$ \eta^{[\gamma(\psi)]} = \Ind_{G[\gamma(\psi)]}^{G_n} (\xi^{[\gamma(\psi)]}) \quad \text{where} \quad \xi^{[\gamma(\psi)]} = \bigodot_{\psi \in \Irr(G)} \psi \wr \gamma(\psi).$$
By Frobenius reciprocity, we have
$$ \left \langle \Ind_{L_n}^{G_n}(\bone), \eta^{[\gamma(\psi)]} \right \rangle = \left \langle \Res_{G[\gamma(\psi)]} ( \Ind_{L_n}^{G_n}(\bone)), \xi^{[\gamma(\psi)]} \right \rangle,$$
where $\mathrm{Res}$ is restriction of characters.  By Mackey's theorem, we have
$$\Res_{G[\gamma(\psi)]} ( \Ind_{L_n}^{G_n}(\bone)) = \sum_{g \in \left[G[\gamma(\psi)] \backslash G_n / L_n\right]} \Ind_{G[\gamma(\psi)] \cap gL_n g^{-1}}^{G[\gamma(\psi)]}(\bone),$$
where the sum is over a set of double coset representatives.  A short calculation reveals that the only double coset representative is $g = 1 \in G_n$.  Define 
$$L[\gamma(\psi)] = G[\gamma(\psi)] \cap L_n = \prod_{\psi \in \Irr(G)} (L \wr S_{|\gamma(\psi)|}).$$  
By Mackey's theorem and Frobenius reciprocity, we then have
\begin{align*}
\left \langle \Ind_{L_n}^{G_n}(\bone), \eta^{[\gamma(\psi)]} \right \rangle & =\left \langle \Ind_{L[\gamma(\psi)]}^{G[\gamma(\psi)]}(\bone), \xi^{[\gamma(\psi)]} \right \rangle \\
 & = \left \langle \bone, \Res_{L[\gamma(\psi)]} (\xi^{[\gamma(\psi)]}) \right \rangle \\
 & = \frac{1}{|L[\gamma(\psi)]||} \sum_{ x \in L[\gamma(\psi)]} \xi^{[\gamma(\psi)]}(x).
\end{align*}
We have $\gamma(\psi) = \varnothing$ unless $L \subseteq \mathrm{ker}(\psi)$, and we have $\chi^{\gamma(\psi)}$ is trivial for every $\psi \in \Irr(G)$.  It follows directly from the definition that for every $\psi \in \Irr(G)$ and every $(a, \tau) \in L_{|\gamma(\psi)|} = L \wr S_{|\gamma(\psi)|}$ that
$$ (\psi \wr \gamma(\psi))(a, \tau) = 1.$$
Thus, for any $x \in L[\gamma(\psi)]$, we have $\xi^{[\gamma(\psi)]}(x) = 1$, and \eqref{want} follows.
\end{proof}

We now see multipartition Kostka numbers as multiplicities, in the following generalization of \eqref{KostkaPerm} to wreath products.

\begin{theorem} \label{YoungWreath} Let $G$ be an abelian group, $L \leq G$ a subgroup, $n \geq 1$, and $\mu = (\mu_1, \ldots, \mu_l) \in \mcP_n$.  Define $L_{\mu} = L \wr S_{\mu} = \prod_{i=1}^l (L \wr S_{\mu_i})$.  Then the permutation character of $G_n$ on $L_{\mu}$ decomposes as
$$ \Ind_{L_{\mu}}^{G_n}(\bone) = \sum_{|[\lambda(\psi)]| = n \atop{ \lambda(\psi) = \varnothing \text{ if } L \not\subseteq \mathrm{ker}(\psi)}} K_{[\lambda(\psi)]\mu}\eta^{[\lambda(\psi)]}.$$
\end{theorem}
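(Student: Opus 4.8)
The plan is to transport the whole statement to the symmetric function algebra $\Lambda$ via the characteristic map $\ch$ of Theorem \ref{WreathChar}, where it becomes Lemma \ref{Hdecomp}. Throughout, set
$$\mcS = \{\psi \in \Irr(G) \mid L \subseteq \mathrm{ker}(\psi)\},$$
so that the condition ``$\lambda(\psi) = \varnothing$ if $L \not\subseteq \mathrm{ker}(\psi)$'' appearing in the theorem is literally the condition ``$\lambda(\psi) = \varnothing$ if $\psi \notin \mcS$'' from Lemma \ref{Hdecomp}.

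The first step is to identify, for each integer $m \geq 1$, the image under $\ch$ of the permutation character $\Ind_{L \wr S_m}^{G \wr S_m}(\bone)$. Applying Proposition \ref{PermDecomp} with $n$ replaced by $m$ and using $\ch(\eta^{[\gamma(\psi)]}) = s_{[\gamma(\psi)]}$, we obtain
$$\ch\!\left(\Ind_{L \wr S_m}^{G \wr S_m}(\bone)\right) = \sum_{\substack{|[\gamma(\psi)]| = m \\ \hgt([\gamma(\psi)]) = 1 \\ \gamma(\psi) = \varnothing \text{ if } \psi \notin \mcS}} s_{[\gamma(\psi)]} = H_m^{(\mcS)},$$
the last equality being the definition of $H_m^{(\mcS)} \in \Lambda_m$.

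The second step uses the graded-algebra structures on $R$ and $\Lambda$. By definition of multiplication in $R$, the product of the class functions $\Ind_{L \wr S_{\mu_i}}^{G \wr S_{\mu_i}}(\bone)$, $1 \leq i \leq l$, is the induction to $G_n$ of their external tensor product over the subgroup $\prod_{i=1}^l (G \wr S_{\mu_i})$; since an external tensor product of induced (trivial) characters is the induced (trivial) character of the product subgroup, induction in stages gives that this product equals $\Ind_{L_\mu}^{G_n}(\bone)$, because $L_\mu = \prod_{i=1}^l (L \wr S_{\mu_i})$ sits inside $\prod_{i=1}^l (G \wr S_{\mu_i}) \leq G_n$ for the embeddings fixed in Section \ref{wreath}. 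Applying the algebra isomorphism $\ch$ together with the first step,
$$\ch\!\left(\Ind_{L_\mu}^{G_n}(\bone)\right) = \prod_{i=1}^l \ch\!\left(\Ind_{L \wr S_{\mu_i}}^{G \wr S_{\mu_i}}(\bone)\right) = \prod_{i=1}^l H_{\mu_i}^{(\mcS)} = H_\mu^{(\mcS)}.$$

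Finally, Lemma \ref{Hdecomp} rewrites $H_\mu^{(\mcS)}$ as $\sum K_{[\lambda(\psi)] \mu}\, s_{[\lambda(\psi)]}$, the sum over $[\lambda(\psi)]$ with $|[\lambda(\psi)]| = n$ and $\lambda(\psi) = \varnothing$ for $\psi \notin \mcS$; applying the inverse isomorphism $\ch^{-1}$, which sends $s_{[\lambda(\psi)]} \mapsto \eta^{[\lambda(\psi)]}$, yields the claimed decomposition of $\Ind_{L_\mu}^{G_n}(\bone)$. The only real obstacle is the bookkeeping in the second step: verifying that the iterated product in $R$ is genuinely the single induced character $\Ind_{L_\mu}^{G_n}(\bone)$, i.e.\ that the chosen embedding of $\prod_i (G \wr S_{\mu_i})$ into $G_n$ restricts to the standard embedding of $L_\mu$ and that induction in stages applies cleanly; everything else is a direct transcription between $R$ and $\Lambda$ using results already established.
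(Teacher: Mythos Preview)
Your proposal is correct and follows essentially the same route as the paper: define $\mcS$, use Proposition \ref{PermDecomp} to identify $\ch(\Ind_{L_m}^{G_m}(\bone)) = H_m^{(\mcS)}$, use induction in stages and the multiplicativity of $\ch$ to get $\ch(\Ind_{L_\mu}^{G_n}(\bone)) = H_\mu^{(\mcS)}$, then apply Lemma \ref{Hdecomp} and invert $\ch$. The paper's proof is slightly terser about the induction-in-stages step you flag, but the argument is the same.
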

\begin{proof} Let $\mcS = \{ \psi \in \Irr(G) \, \mid \, L \subseteq \mathrm{ker}(\psi) \}$.  For any $m \geq 1$, we have from Proposition \ref{PermDecomp} and Theorem \ref{WreathChar} that
$$\ch(\Ind_{L_m}^{G_m}(\bone)) = H_m^{(\mcS)}.$$
Then we have
\begin{align*}
\Ind_{L_{\mu}}^{G_n}(\bone) & = \Ind_{L_{\mu_1} \times \cdots \times L_{\mu_l}}^{G_n}(\bone) \\
& = \Ind_{G_{\mu_1} \times \cdots \times G_{\mu_l}}^{G_n} \left(\Ind_{L_{\mu_1}}^{G_{\mu_1}}(\bone) \cdots \Ind_{L_{\mu_l}}^{G_{\mu_l}}(\bone) \right).
\end{align*}
From these two statements, and Theorem \ref{WreathChar}, we have $\ch(\Ind_{L_{\mu}}^{G_n}(\bone)) = H_{\mu}^{(\mcS)}$.  The result now follows by applying Lemma \ref{Hdecomp} and Theorem \ref{WreathChar}.
\end{proof}

An example of particular interest is the case that $G$ is a cyclic group of order $r$, which we denote by $C^{(r)}$.  In this case $G_n = C^{(r)} \wr S_n$ is the complex reflection group $G(r, 1, n)$.  We realize $C^{(r)}$ as the multiplicative group of $r$th roots of unity, with generator $\zeta = e^{2\sqrt{-1}\pi/r}$, so that $C^{(r)} = \{ \zeta^{j-1} \, \mid \, 1 \leq j \leq r \}$.  We label the irreducible characters of $C^{(r)}$ by $\psi_j(c) = c^{j-1}$, for $1 \leq j \leq r$.  Then the irreducible characters of $G_n$ are parameterized by $\mcP_n[r]$, and we write $\lambda(j)$ for the partition corresponding to $\psi_j \in \Irr(C^{(r)})$.  The subgroups of $C^{(r)}$ correspond to positive divisors $d|r$, where $C^{(d)} = \langle \zeta^{r/d} \rangle$ is the subgroup of $C^{(r)}$ of order $d$.  Then we have $C^{(d)} \subseteq \mathrm{ker}(\psi_j)$ if and only if $d|(j-1)$.  Proposition \ref{PermDecomp} and Theorem \ref{YoungWreath} then give the following.

\begin{corollary} \label{CRGinduce} Let $r, n \geq 1$, and let $d$ be a positive divisor of $r$.  Then
$$ \Ind_{C^{(d)} \wr S_n}^{C^{(r)} \wr S_n}(\bone) = \sum_{[\gamma(j)] \in \mcP_n[r] \atop{ \hgt([\gamma(j)]) = 1 \atop{ \gamma(j) = \varnothing \text{ if } d \nmid (j-1)}}} \eta^{[\gamma(j)]}.$$
More generally, if $\mu \in \mcP_n$, then
$$ \Ind_{C^{(d)} \wr S_{\mu}}^{C^{(r)} \wr S_n} (\bone) = \sum_{[\lambda(j)] \in \mcP_n[r] \atop {\lambda(j) = \varnothing \text{ if } d \nmid (j-1)}} K_{[\lambda(j)] \mu} \eta^{[\lambda(j)]}, \quad  \text{and} \quad \Ind_{S_{\mu}}^{C^{(r)} \wr S_n}(\bone) = \sum_{[\lambda(j)] \in \mcP_n[r]} K_{[\lambda(j)] \mu} \eta^{[\lambda(j)]},$$
where the second case is the result of taking $d=1$ in the first case.
\end{corollary}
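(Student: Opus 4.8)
The plan is to obtain all three identities as direct specializations of Proposition \ref{PermDecomp} and Theorem \ref{YoungWreath} to $G = C^{(r)}$ and $L = C^{(d)}$, using only the explicit data for cyclic groups recorded just above the statement.

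First I would pin down the relevant index set. Since $C^{(d)} = \langle \zeta^{r/d}\rangle$ and $\psi_j(c) = c^{j-1}$ for $1 \le j \le r$, the containment $C^{(d)} \subseteq \mathrm{ker}(\psi_j)$ holds if and only if $(\zeta^{r/d})^{j-1} = 1$, i.e. $r \mid (r/d)(j-1)$, which is equivalent to $d \mid (j-1)$. Thus the set $\mcS = \{\psi \in \Irr(C^{(r)}) \mid C^{(d)} \subseteq \mathrm{ker}(\psi)\}$ appearing in those two theorems is exactly $\{\psi_j \mid d \mid (j-1)\}$, and under the labeling of $\Irr(C^{(r)})$ by $j = 1, \dots, r$ the set $\mcP_n[|\Irr(C^{(r)})|]$ is $\mcP_n[r]$ with $\lambda(j)$ attached to $\psi_j$.

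With this dictionary, the first displayed formula is precisely Proposition \ref{PermDecomp} with $G = C^{(r)}$, $L = C^{(d)}$, after rewriting the condition ``$\gamma(\psi) = \varnothing$ if $L \not\subseteq \mathrm{ker}(\psi)$'' as ``$\gamma(j) = \varnothing$ if $d \nmid (j-1)$''; the second displayed formula is Theorem \ref{YoungWreath} under the same substitution (noting that $C^{(d)} \wr S_\mu$ is the subgroup called $L_\mu$ there). For the last identity I would set $d = 1$: then $C^{(1)}$ is trivial, so $C^{(1)} \wr S_\mu$ is simply $S_\mu \le C^{(r)} \wr S_n$, while the divisibility constraint ``$1 \nmid (j-1)$'' is never satisfied, so the emptiness condition on the $\lambda(j)$ drops out and the sum ranges over all of $\mcP_n[r]$.

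This is entirely a matter of substitution and relabeling, so I do not anticipate any genuine obstacle; the one step deserving a line of care is the equivalence $C^{(d)} \subseteq \mathrm{ker}(\psi_j) \iff d \mid (j-1)$, after which the three formulas read off verbatim from the two cited results.
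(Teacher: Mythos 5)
Your proposal is correct and matches the paper's own argument: the paper likewise notes $C^{(d)} = \langle \zeta^{r/d}\rangle$ and $C^{(d)} \subseteq \mathrm{ker}(\psi_j)$ if and only if $d \mid (j-1)$, and then obtains all three formulas by specializing Proposition \ref{PermDecomp} and Theorem \ref{YoungWreath} to $G = C^{(r)}$, $L = C^{(d)}$ (with $d=1$ giving the last identity). Your explicit check of the kernel condition is the only computation involved, and you carried it out correctly.
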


A few cases of the induced representations in Corollary \ref{CRGinduce} appear as parts of the generalized involution models for the complex reflection groups $G(r, 1, n)$ constructed by Marberg \cite{Mar12}.  Namely, $\Ind_{S_n}^{C^{(r)} \wr S_n}(\bone)$ is the piece of the model corresponding to $k=0$ for $r$ odd, and $\Ind_{C^{(2)} \wr S_n}^{C^{(r)} \wr S_n}(\bone)$ corresponds to the $k= \ell = 0$ piece for $r$ even in \cite[Theorem 5.6]{Mar12}.

\section{Multiplicity One for Multipartitions} \label{Multi1}

In this section we prove the main result, which is a generalization of Theorem \ref{BZ} to multipartitions.  We begin with a necessary condition for a multipartition Kostka number to be $1$.

\begin{lemma} \label{KosSeq1tilde} Suppose $[\lambda(j)] \in \mcP_n[r]$ and $\mu \in \mcP_n$, and let $\tlam \in \mcP_n$ be as in \eqref{tildedef}.  Then $K_{[\lambda(j)] \mu} \geq K_{\tlam \mu}$.  In particular, if $K_{[\lambda(j)] \mu} = 1$ then $K_{\tlam \mu} = 1$.
\end{lemma}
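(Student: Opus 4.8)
The plan is to construct, for fixed $[\lambda(j)]$ and $\mu$, an explicit injection from the set of semistandard Young tableaux of shape $\tlam$ and weight $\mu$ into the set of semistandard Young $r$-multitableaux of shape $[\lambda(j)]$ and weight $\mu$. Such an injection gives $K_{[\lambda(j)]\mu}\ge K_{\tlam\mu}$ directly, and the final assertion then follows by combining this inequality with Lemma \ref{KosSeqNonzero}. The map is a rigidified version of the column-redistribution construction used at the end of the proof of Lemma \ref{KosSeqNonzero}; the work is in pinning it down so that it is well defined and reversible.

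\textbf{Structural observations.} First I would record two facts. (i) The multiset of column lengths of $\tlam$ is the disjoint union of the multisets of column lengths of the $\lambda(j)$; this is exactly the alternate description of $\tlam$ noted after \eqref{tildedef}. (ii) In any semistandard Young tableau, the columns of a given length $c$ occur in consecutive positions and, read from left to right, are weakly increasing entrywise: adjacent columns of equal length are forced to be comparable entrywise by the weak increase along rows, so the whole block is a chain and appears in sorted order. Observation (ii) is the one that makes the construction reversible, and verifying it carefully (including that two equal-length columns in a common row of a semistandard tableau must be entrywise comparable) is the main point requiring genuine care.

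\textbf{The map.} Next I would fix, once and for all, an assignment rule depending only on column lengths: for each length $c$, writing $m_c$ for the number of columns of $\tlam$ of length $c$ and $a_{c,j}$ for the number of columns of $\lambda(j)$ of length $c$ (so $\sum_j a_{c,j}=m_c$ by (i)), given a tableau $T$ of shape $\tlam$ I take the $m_c$ columns of $T$ of length $c$ in their sorted left-to-right order, assign the first $a_{c,1}$ to $\lambda(1)$, the next $a_{c,2}$ to $\lambda(2)$, and so on, keeping relative order inside each $\lambda(j)$. I then check that each resulting filling $T(j)$ is semistandard: columns are unchanged so they strictly increase downward; within a block of equal-length columns the rows weakly increase because they did in $T$; and across blocks of different lengths inside $\lambda(j)$ the required row inequalities hold because in $\tlam$ every longer column lies strictly to the left of every shorter one, so the relevant entrywise inequalities are inherited from $T$. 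Since no entries are altered, the total content is unchanged and the weight of $[T(j)]$ is $\mu$.

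\textbf{Injectivity and conclusion.} Finally, given a multitableau $[T(j)]$ in the image, for each $c$ I collect the columns of length $c$ occurring in the various $T(j)$; this multiset equals the multiset of length-$c$ columns of $T$, and by (ii) these must appear in $T$ in sorted order, so re-sorting each such multiset and placing it in its forced block of positions in $\tlam$ recovers $T$ uniquely. Hence the map is injective and $K_{\tlam\mu}\le K_{[\lambda(j)]\mu}$. For the last sentence: if $K_{[\lambda(j)]\mu}=1$ then $K_{\tlam\mu}\le 1$; and since $K_{[\lambda(j)]\mu}>0$, Lemma \ref{KosSeqNonzero} gives $\tlam\unrhd\mu$, hence $K_{\tlam\mu}>0$, so $K_{\tlam\mu}=1$.
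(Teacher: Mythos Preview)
Your proof is correct and follows essentially the same route as the paper: both produce an injection from tableaux of shape $\tlam$ to $r$-multitableaux of shape $[\lambda(j)]$ by redistributing the columns of a tableau $T$ among the $\lambda(j)$ while preserving left-to-right order, exactly as in the construction at the end of the proof of Lemma~\ref{KosSeqNonzero}. The paper argues injectivity by noting that a fixed column assignment sends two tableaux differing in column $i$ to two multitableaux differing in the corresponding column, whereas you verify injectivity by exhibiting a left inverse (collect all columns of each length and re-sort using your observation (ii)); this is a slightly more explicit packaging of the same idea, and your verification that the resulting $[T(j)]$ is semistandard is more carefully spelled out than in the paper.
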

\begin{proof} From Lemma \ref{KosSeqNonzero}, if $K_{\tlam \mu} = 0$, then $K_{[\lambda(j)] \mu} = 0$, so we may suppose that $K_{\tlam \mu} > 0$.  Let $T$ be some semistandard Young tableau of shape $\tlam$ and weight $\mu$, and we have seen in Section \ref{MultiDefs} after Lemma \ref{KosSeqNonzero}, that from $T$ we can construct a semistandard Young $r$-multitableau $[T(j)]$ of shape $[\lambda(j)]$ and weight $\mu$.  Now suppose $T'$ is a semistandard Young tableaux of shape $\tlam$ and weight $\mu$ which is distinct from $T$.  Then some column, say the $i$th column, of $T$ is distinct from column $i$ of $T'$.  Suppose that the entries of column $i$ of $T$ are the entries of column $k$ of $T(j)$ of an $r$-multitableau $[T(j)]$ of shape $[\lambda(j)]$.  Through the process described in Section \ref{MultiDefs}, we can construct a semistandard Young $r$-multitableau $[T'(j)]$ with the entries of column $i$ of $T'$ as the entries of column $k$ of $T'(j)$ (since the order of entries is still preserved, since it was preserved in constructing $[T(j)]$).  In particular, $[T(j)]$ and $[T'(j)]$ are distinct.  It follows that $K_{[\lambda(j)] \mu} \geq K_{\tlam \mu}$.  If $K_{[\lambda(j) \mu]} = 1$, then $K_{\tlam \mu} > 0$ by Lemma \ref{KosSeqNonzero}, and so $K_{\tlam \mu} = 1$.
\end{proof}

The converse of the second statement of Lemma \ref{KosSeq1tilde} is false, which we can see by considering $[\lambda(j)]$ defined by $\lambda(1) = \lambda(2) =(1)$ and $\mu=(1,1)$.  Then $\tlam = (2)$, and $K_{\tlam \mu} = 1$ where $T = \young(12)$ is the unique semistandard Young tableau of shape $\tlam$ and weight $\mu$.  However, $[T(j)]$ and $[T'(j)]$ defined by  $T(1) = \young(1)$, $T(2) = \young(2)$, and $T'(1) = \young(2)$, $T'(2) = \young(1)$, are both semistandard Young $2$-multitableaux of shape $[\lambda(j)]$ and weight $\mu$.

The following observation is crucial in the proof of the main result.

\begin{lemma} \label{SameCol} Let $[\lambda(j)] \in \mcP_n[r]$ and $\mu \in \mcP_n$ such that $K_{[\lambda(j)] \mu} = 1$.  Let $[T(j)]$ be the unique semistandard Young $r$-multitableau of shape $[\lambda(j)]$ and weight $\mu$.  If two distinct $\lambda(j)$ have some column of the same length, then every column of that length in $[T(j)]$ must have identical entries.
\end{lemma}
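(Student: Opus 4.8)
The idea is to argue by contradiction: if some column of a shared length $m$ appears in $T(j)$ with entries differing from the entries of a column of length $m$ in $T(j')$ (with $j \ne j'$), then one can perform a swap of these two columns between $T(j)$ and $T(j')$ to produce a \emph{second} semistandard Young $r$-multitableau of shape $[\lambda(j)]$ and weight $\mu$, contradicting $K_{[\lambda(j)] \mu} = 1$. The only subtlety is that naively swapping two columns need not respect the row-weak-increasing condition, so one has to be careful about which columns get swapped and where they land.

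\textbf{Key steps.} First I would set up notation: suppose for contradiction that there are two columns of a common length $m$ in $[T(j)]$ whose entries are \emph{not} all identical across all columns of length $m$. Among all columns of length $m$ appearing in any $T(j)$, order the underlying column-content sets (each such column, being strictly increasing top-to-bottom, is determined by its set of $m$ entries) by, say, reverse-lexicographic order, or equivalently note that within a single tableau the length-$m$ columns are forced into a fixed left-to-right order once their contents are known (this uses the row-weak-increasing condition, exactly as in the proof of Lemma~\ref{ColLemma}, case (3)). The second step is the crucial observation: take a column $C$ of length $m$ in $T(j)$ and a column $C'$ of length $m$ in $T(j')$ with $C \ne C'$ as content-sets, chosen so that $C'$ is ``more extreme'' (e.g. larger in the chosen order) than $C$. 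Now move $C'$ out of $T(j')$ and insert it into $T(j)$, and move $C$ out of $T(j)$ and insert it into $T(j')$, in each case placing the incoming column into the position among the length-$m$ columns of that tableau dictated by the forced left-to-right ordering. Third, I would verify that the two resulting fillings are each still semistandard: columns are unchanged as multisets of strictly-increasing entries, so the column condition is automatic, and the row condition holds precisely because we inserted each column into the unique slot compatible with weak-increase along rows — this is the same ``rearranging columns of fixed length'' argument already used for $\tlam$ in Lemma~\ref{KosSeqNonzero} and in Lemma~\ref{KosSeq1tilde}. Fourth, the total weight is unchanged since we have only permuted columns among the $\lambda(j)$, and the new $r$-multitableau is genuinely distinct from $[T(j)]$ because $C \ne C'$ means the multiset of entries in (say) $T(j)$ has changed. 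This contradicts $K_{[\lambda(j)] \mu} = 1$.

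\textbf{Main obstacle.} The delicate point is ensuring that after the swap each tableau $T(j)$ and $T(j')$ remains a valid semistandard Young tableau, i.e. that the incoming column can actually be placed so that rows weakly increase. This is where one must invoke that a tableau all of whose length-$m$ columns have prescribed content-sets admits at most one valid arrangement of those columns (the missing-entries / column-content determines the left-to-right order, cf. Lemma~\ref{ColLemma}(3)), and that swapping one such column for another of the same length does not disturb the shorter columns, since those sit strictly to the right of all length-$m$ columns and their row-compatibility with a length-$m$ column depends only on the top $<m$ entries, which we are free to re-sort. I would spell this out by reducing to the sub-tableau consisting of the columns of length $\ge m$ and checking the boundary rows explicitly; everything below row $m$ is untouched. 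Once this local check is in hand the contradiction is immediate.
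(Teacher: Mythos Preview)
Your swap-and-reinsert strategy is sound in spirit, and you correctly flag the verification of semistandardness as the crux. But your sketch does not actually close that gap, and closing it by a purely local argument does not work. Two concrete issues: (i) a collection of strictly-increasing columns of a fixed length $m$ need \emph{not} admit any arrangement into a semistandard rectangle --- e.g.\ the columns $(1,4)$ and $(2,3)$ cannot be placed side by side in either order --- so after replacing $C$ by $C'$ inside $T(j)$ there is no a~priori reason the length-$m$ block can be reordered at all; (ii) even if it can, the boundary with the adjacent length-$(>m)$ column of $T(j)$ is governed by entries of that longer column, which has no relationship to $C'$ (a column that lived in $T(j')$). Your paragraph on the obstacle only discusses the shorter columns to the right and says you would ``check the boundary rows explicitly'' on the left, but there is nothing to check \emph{with}: the data of $T(j)$ and $T(j')$ alone do not force compatibility.

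The paper's proof avoids this by routing through $\tlam$. From Lemma~\ref{KosSeq1tilde} one has $K_{\tlam\mu}=1$; let $T$ be the unique tableau of shape $\tlam$ and weight $\mu$. Any distribution of the columns of $T$ among the $\lambda(j)$ (preserving left-to-right order within each component) yields a semistandard multitableau, hence yields the unique $[T(j)]$. In particular every column of $[T(j)]$ is literally a column of $T$, so all length-$m$ columns across all components lie on a single entrywise chain, and all compatibility with longer and shorter columns is inherited from $T$. Now one just locates two adjacent length-$m$ columns $i,i+1$ of $T$ with different content and sends them to $\lambda(j_1),\lambda(j_2)$ in the two possible ways; both choices are automatically valid and visibly distinct. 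Your swap argument \emph{can} be repaired, but the repair is precisely to first establish that the columns of $[T(j)]$ are the columns of $T$ --- at which point you have reproduced the paper's approach, and the direct construction from $T$ is cleaner than swap-then-reinsert.
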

\begin{proof} Suppose that two distinct $\lambda(j)$ have some column of the same length, but two columns of that length do not have identical entries in $[T(j)]$.  By Lemma \ref{KosSeq1tilde}, $K_{\tlam \mu} = 1$, and let $T$ be the unique semistandard Young tableau of shape $\tlam$ and weight $\mu$.  By the process described in Section \ref{MultiDefs}, we can construct $[T(j)]$ from $T$ by making each column of $T$ the same as some column of some $T(j)$.  So, some two columns of the same length in $T$ do not have the same entries, say columns $i$ and $i+1$.  Suppose $\lambda(j_1)$ and $\lambda(j_2)$ are two distinct partitions of $[\lambda(j)]$ with columns of length the same as columns $i$ and $i+1$ of $\tlam$.  Again from the process from Section \ref{MultiDefs}, from the tableau $T$ of shape $\tlam$ and weight $\mu$, we can construct a semistandard Young $r$-multitableau of shape $[\lambda(j)]$ and weight $\mu$ by taking columns of $T$ and making them columns of any individual tableau of shape some $\lambda(j)$, as long as left-to-right order is preserved in individual tableau.  This means we can take column $i$ of $T$ and make it a column in a tableau of shape $\lambda(j_1)$ and column $i+1$ of $T$ a column in a tableau of shape $\lambda(j_2)$, or vice versa.  This implies $K_{[\lambda(j)] \mu} > 1$, a contradiction.
\end{proof}

We may now prove the main theorem.  We note that in the bipartition ($r=2$) case, this is presumably implied by the case of a type $B/C$ Lie algebra in the result of Berenshte\u{\i}n and Zelevinski\u{\i} \cite[Theorem 1.2]{BeZe90}.

\begin{theorem} \label{MultiMultOne} Let $[\lambda(j)] \in \mcP_n[r]$ and $\mu \in \mcP_n$ with $\ell(\mu) = l$.  Then $K_{[\lambda(j)] \mu} = 1$ if and only if there exists a choice of indices $0 = i_0 < i_1 < \cdots < i_t = l$ such that, for each $k = 1, \ldots, t$, the $r$-multipartition $[\lambda(j)^k]$ defined by
$$ \lambda(j)^k = (\lambda(j)_{i_{k-1} + 1}, \lambda(j)_{i_{k-1}+2}, \ldots, \lambda(j)_{i_k}),$$
where $\lambda(j)_i = 0$ if $i > \ell(\lambda(j))$, and the partition $\mu^k = (\mu_{i_{k-1} + 1}, \mu_{i_{k-1} + 2}, \ldots, \mu_{i_k})$, satisfy the following:
\begin{enumerate} 
\item[(1)] $\widetilde{\lambda^k} \unrhd \mu^k$, and
\item[(2)] for at most one $j$, either $\lambda(j)_{i_{k-1} + 1} = \lambda(j)_{i_{k-1} + 2}= \cdots= \lambda(j)_{i_k - 1} > \lambda(j)_{i_k}$ or $\lambda(j)_{i_{k-1}+1} > \lambda(j)_{i_{k-1} + 2}= \lambda(j)_{i_{k-1} + 3}= \cdots= \lambda(j)_{i_k}$, and for all other $j$, $\lambda(j)_{i_{k-1} + 1} = \lambda(j)_{i_{k-1} + 2}= \cdots= \lambda(j)_{i_k - 1} = \lambda(j)_{i_k}$.
\end{enumerate}
\end{theorem}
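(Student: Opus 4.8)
The plan is to reduce the multipartition statement to the single-partition Theorem~\ref{BZ} applied to $\tlam$, using Lemma~\ref{KosSeq1tilde} and Lemma~\ref{SameCol} as the bridge. First I would prove the ``only if'' direction. Assume $K_{[\lambda(j)]\mu}=1$. By Lemma~\ref{KosSeq1tilde} we get $K_{\tlam\mu}=1$, so Theorem~\ref{BZ} supplies indices $0=i_0<i_1<\cdots<i_t=l$ with $\tlam^k\unrhd\mu^k$ and each $\tlam^k$ of the prescribed ``almost-constant'' shape. The key point is that $\widetilde{\lambda^k}$ and $(\tlam)^k$ coincide: the parts of $\tlam$ in rows $i_{k-1}+1$ through $i_k$ are exactly the row-sums $\sum_j \lambda(j)_i$ over that range, so $\widetilde{\lambda^k}=(\tlam)^k$, giving condition~(1) immediately. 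For condition~(2) I must show that within each block at most one $\lambda(j)$ is permitted a ``jump'' of the two allowed types while all others are genuinely constant across rows $i_{k-1}+1,\dots,i_k$. This is where Lemma~\ref{SameCol} does the work: if two distinct $\lambda(j_1),\lambda(j_2)$ both had non-constant behavior within a block, then within that block $\tlam$ would have at least two column-lengths each realized by more than one of the $\lambda(j)$'s, and Lemma~\ref{SameCol} combined with the column-shuffling construction of Section~\ref{MultiDefs} would produce a second $r$-multitableau, contradicting $K_{[\lambda(j)]\mu}=1$. I would need to check carefully that the shape of $\tlam^k$ forces this: since $\tlam^k$ is almost-constant, all but possibly one or two of its columns have the same (maximal) length, and the row-sum structure then pins down how much room each individual $\lambda(j)$ has to vary.

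For the ``if'' direction, assume the indices and the block conditions (1) and (2) exist. As in the proof of Theorem~\ref{BZ}, condition (1) across all blocks gives $\tlam\unrhd\mu$, hence $K_{[\lambda(j)]\mu}\geq 1$ by Lemma~\ref{KosSeqNonzero}. The block structure lets me argue that in any semistandard $r$-multitableau $[T(j)]$ of shape $[\lambda(j)]$ and weight $\mu$, the only entries appearing in rows $i_{k-1}+1,\dots,i_k$ of every $T(j)$ are the values $i_{k-1}+1,\dots,i_k$ — the same row-counting argument used in the proof of Theorem~\ref{BZ}, applied to $\tlam$ and transported to the multitableau via the column correspondence. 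This reduces the count to a product over blocks, so it suffices to treat a single block: $\widetilde{\lambda^k}\unrhd\mu^k$, at most one $\lambda(j)^k$ has an allowed jump, and all others are rectangular. Here I would invoke Proposition~\ref{compdecomp}: $K_{[\lambda(j)^k]\mu^k}=\sum \prod_j K_{\lambda(j)^k\,\omega(j)}$. For each rectangular $\lambda(j)^k$ (all parts equal, say $c$ columns of full block-length), $K_{\lambda(j)^k\,\omega(j)}$ is nonzero for a unique choice of $\omega(j)$ forced by the rectangle — indeed a full rectangle of height $h$ has a unique semistandard filling of any given weight only when that weight is $(c,c,\dots,c)$, and more to the point the rectangle's columns must each contain $1,\dots,h$ so $\omega(j)$ is completely determined — and equals $1$ there. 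The single exceptional $\lambda(j_0)^k$ has one of the two shapes handled in Lemma~\ref{ColLemma}, so once the other $\omega(j)$'s are fixed, $\omega(j_0)=\mu^k-\sum_{j\neq j_0}\omega(j)$ is determined, and $K_{\lambda(j_0)^k\,\omega(j_0)}=1$ by Lemma~\ref{ColLemma} (cases (1)--(3)), provided $\lambda(j_0)^k\unrhd\omega(j_0)$; that dominance should follow from $\widetilde{\lambda^k}\unrhd\mu^k$ together with the fact that the other summands are full rectangles contributing equally to every partial sum. Hence $K_{[\lambda(j)^k]\mu^k}=1$, and multiplying over $k$ gives $K_{[\lambda(j)]\mu}=1$.

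The main obstacle I anticipate is the bookkeeping in the ``if'' direction where I must verify that the residual weight $\omega(j_0)$ left for the exceptional partition really does satisfy $\lambda(j_0)^k\unrhd\omega(j_0)$ and falls into exactly one of the three cases of Lemma~\ref{ColLemma} — in particular ruling out the possibility that subtracting the forced rectangular weights from $\mu^k$ produces something that is not even a partition, or that is dominated incorrectly. A careful choice of which tableau-filling conventions to use (greedy bottom-to-top horizontal strips, as in Section~\ref{Kostka}) should make the determinacy transparent. A secondary subtlety is making the ``at most one $j$ varies'' deduction from Lemma~\ref{SameCol} fully rigorous: I need to match up column-lengths across the $\lambda(j)$'s within a block with the near-constant column profile of $\tlam^k$, and confirm that two separately-varying partitions genuinely force a repeated column length that Lemma~\ref{SameCol} can exploit. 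I expect both issues to be resolvable by the same case analysis used in Lemma~\ref{ColLemma}, just carried out one block at a time.
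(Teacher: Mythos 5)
Your ``if'' direction is essentially the paper's own argument (block reduction by the counting argument, the rectangular $\lambda(j)^k$ forced so that the exceptional $\lambda(j_0)^k$ receives the residual weight $\mu^k-(s,\dots,s)$, then Lemma~\ref{ColLemma}), and the bookkeeping you flag there is manageable. The genuine gap is in the ``only if'' direction: your key claim --- that if two distinct $\lambda(j)$ are non-constant inside a block of the Berenshte\u{\i}n--Zelevinski\u{\i} decomposition of $\tlam$, then Lemma~\ref{SameCol} plus column shuffling produces a second multitableau, contradicting $K_{[\lambda(j)]\mu}=1$ --- is false. Take $r=2$, $\lambda(1)=\lambda(2)=(2,1)$ and $\mu=(4,2)$, so $\tlam=(4,2)$. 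The single block $i_1=l=2$ is a perfectly valid choice of indices in Theorem~\ref{BZ} (its condition (2) is automatic for a two-row block), and inside that block both $\lambda(1)$ and $\lambda(2)$ strictly decrease; nevertheless $K_{[\lambda(j)]\mu}=1$, since in the unique multitableau both tableaux have first row filled with $1$'s and second row $2$, so all columns of equal length are filled identically and no shuffle yields a new multitableau. Lemma~\ref{SameCol} only produces a contradiction when two equal-length columns are filled \emph{differently}; when several $\lambda(j)$ jump, what it actually forces is that those columns are filled the same.

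What your proposal is missing is precisely the step the paper spends most of its proof on: when at least two $j$ satisfy $\lambda(j)_{\iota_{k-1}+1}>\lambda(j)_{\iota_{k-1}+2}$ (or at least two satisfy $\lambda(j)_{\iota_k-1}>\lambda(j)_{\iota_k}$), one uses Lemma~\ref{SameCol} together with the almost-constant shape of $\tlam^k$ to show that in the unique multitableau every entry of row $\iota_{k-1}+1$ (resp.\ row $\iota_k$) of every $T(j)$ equals $\iota_{k-1}+1$ (resp.\ $\iota_k$), and that this value occurs in no other row; this justifies \emph{splitting} the offending block into the sub-blocks $\{\iota_{k-1}+1\}$ and $\{\iota_{k-1}+2,\dots,\iota_k\}$ (resp.\ $\{\iota_{k-1}+1,\dots,\iota_k-1\}$ and $\{\iota_k\}$), after which one checks that the refined index set satisfies conditions (1) and (2). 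The theorem only asserts the existence of \emph{some} index set, which may be strictly finer than the BZ indices for $\tlam$ you start from (as the example above shows), so no contradiction should be sought; since your argument has no mechanism for refining indices and instead relies on the false dichotomy, the ``only if'' direction as proposed does not go through.
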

\begin{proof}  
We first assume that $[\lambda(j)]$ and $\mu$ satisfy the listed conditions, and show that $K_{[\lambda(j)]\mu} = 1$.  This direction of the proof parallels the same direction of the proof of Theorem \ref{BZ}.  Since $\widetilde{\lambda^k} \unrhd \mu^k$ for $k = 1, \ldots, t$, then $\tlam \unrhd \mu$, and so $K_{[\lambda(j)] \mu} > 0$ by Lemma \ref{KosSeqNonzero}.  Let $[T(j)]$ be some $r$-multitableau of shape $[\lambda(j)]$ and weight $\mu$.  The entries in each column of each $T(j)$ strictly increase, and so the entries in row $i$ of any $T(j)$ must be all at least $i$.  Since row $i_1 + 1$ of each $T(j)$ must have entries at least $i_1 + 1$, then all entries $1$ through $i_1$ must appear in rows $1$ through $i_1$ of $[T(j)]$.  Since $|[\lambda(j)^1]| = |\mu^1| = \mu_1 + \cdots + \mu_{i_1}$, then also the only entries in rows $1$ through $i_1$ of $[T(j)]$ can be $1$ through $i_1$.  By induction, for each $k$, the only entries in rows $i_{k-1} + 1$ through $i_k$ of $[T(j)]$ are $i_{k-1} + 1$ through $i_k$, and these are the only rows in which these entries appear.  Thus, we may consider each $[\lambda(j)^k]$ with $\mu^k$ independently, and $K_{[\lambda(j)] \mu} = \prod_{k=1}^t K_{[\lambda(j)^k] \mu^k}$.  

It is now enough to show that $K_{[\lambda(j)] \mu} = 1$ whenever $\tlam \unrhd \mu$, $\ell(\mu) = l$, and for at most one $j$ either $\lambda(j)_1 = \lambda(j)_2 = \cdots \lambda(j)_{l-1} > \lambda(j)_l$ or $\lambda(j)_1 > \lambda(j)_2 = \lambda(j)_3 = \cdots = \lambda(j)_l$, and for all other $j$, $\lambda(j)_1 = \lambda(j)_2 = \cdots = \lambda(j)_l$.  As above, let $[T(j)]$ denote some $r$-multitableau of shape $[\lambda(j)]$ and weight $\mu$.  For all $j$ such that $\lambda(j)_1 = \lambda(j)_2 = \cdots = \lambda(j)_l$, then $\ell(\lambda(j)) = l$ (or $\lambda(j) = (0)$), and every column of $T(j)$ must have the entries $1$ through $l$ in sequential order (when $\lambda(j) \neq (0)$).  If these are the only nonempty $\lambda(j)$, then $[T(j)]$ is uniquely determined and we are done.  Otherwise, consider the unique $j =j'$ such that $\lambda(j')_1 >\lambda(j')_2 = \lambda(j')_3 = \cdots = \lambda(j')_l$ or $\lambda(j')_1=\lambda(j')_2 = \cdots = \lambda(j')_{l-1} > \lambda(j')_l$.    Let $s$ be the total number of columns in all other nonempty $\lambda(j)$, and define the partition $\nu$ by $\nu_i = \mu_i - s$.  Then $\lambda(j') \unrhd \nu$.  Since all other $T(j)$ are uniquely determined, it is enough to show that $K_{\lambda(j') \nu} = 1$.  This is implied by Lemma \ref{ColLemma}.

We now assume that $K_{[\lambda(j)] \mu} = 1$.  Throughout, let $[T(j)]$ be the unique $r$-multitableau of shape $[\lambda(j)]$ and weight $\mu$.  By Lemma \ref{KosSeq1tilde}, we have $K_{\tlam \mu} = 1$.  By Theorem \ref{BZ}, there exists a choice of indices, say $0 < \iota_0 < \iota_1 < \cdots < \iota_u = l$ such that, for each $k = 1, \ldots, u$, we have
$$ \tlam^k = (\tlam_{\iota_{k-1} + 1}, \tlam_{\iota_{k-1}+2}, \ldots, \tlam_{\iota_k}) \quad \text{and} \quad \mu^k = (\mu_{\iota_{k-1}+1}, \mu_{\iota_{k-1}+2}, \ldots, \mu_{\iota_k}),$$
where $\tlam_i = 0$ if $i > \ell(\tlam)$, satisfy $\tlam^k \unrhd \mu^k$ and either 
$$\tlam_{\iota_{k-1}+1} = \tlam_{\iota_{k-1}+2} = \cdots = \tlam_{\iota_k -1} \quad  \text{or} \quad \tlam_{\iota_{k-1} +1} > \tlam_{\iota_{k-1}+2} = \tlam_{\iota_{k-1} + 3} = \cdots = \tlam_{\iota_k}.$$
For each $k = 1, \ldots, u$, consider the $r$-multipartition $[\lambda(j)^k]$ given by
$$\lambda(j)^k = (\lambda(j)_{\iota_{k-1}+1}, \lambda(j)_{\iota_{k-1}+2}, \ldots, \lambda(j)_{\iota_k}),$$
where $\lambda(j)_{\iota} = 0$ if $\iota > \ell(\lambda(j))$.  Then we have $\widetilde{\lambda^k} = \tlam^k$, so that $\widetilde{\lambda^k} \unrhd \mu^k$.

Note that whenever $\tlam_i = \tlam_{i+1}$ then $\lambda(j)_i = \lambda(j)_{i+1}$ for each $j$, since $\lambda(j)_i \geq \lambda(j)_{i+1}$ for each $j$, and if $\lambda(j)_i > \lambda(j)_{i+1}$ for some $j$, then $\tlam_i = \sum_j \lambda(j)_i > \sum_j \lambda(j)_{i+1} = \tlam_{i+1}$.  Thus, for each $k = 1, \ldots, u$,  we have
$$\lambda(j)_{\iota_{k-1}+1} = \lambda(j)_{\iota_{k-1}+2} = \cdots = \lambda(j)_{\iota_k -1} \quad \text{or} \quad \lambda(j)_{\iota_{k-1}+2} = \lambda(j)_{\iota_{k-1} + 3} = \cdots = \lambda(j)_{\iota_k},$$
where either the first holds for every $j$ or the second holds for every $j$.
If for each $k$, there is at most one $j$ such that either $\lambda(j)_{\iota_{k-1} + 1} = \lambda(j)_{\iota_{k-1} + 2}= \cdots= \lambda(j)_{\iota_k - 1} > \lambda(j)_{\iota_k}$ or $\lambda(j)_{\iota_{k-1}+1} > \lambda(j)_{\iota_{k-1} + 2}= \lambda(j)_{\iota_{k-1} + 3}= \cdots= \lambda(j)_{\iota_k}$, then we can take $t=u$, and $i_k = \iota_k$ for $k=1, \ldots, t$, and the desired conditions are satisfied.  Otherwise, we consider the following possibilities.

Consider any $k$ such that there are at least two $j$ such that $\lambda(j)_{\iota_{k-1}+1} > \lambda(j)_{\iota_{k-1}+2}$, or at least two $j$ such that $\lambda(j)_{\iota_k - 1} > \lambda(j)_{\iota_k}$.  Suppose first that $\lambda(j)_{\iota_{k-1}+1} > \lambda(j)_{\iota_{k-1}+2}$ for at least two $j$.  Then for each such $j$, there is at least one column of length $\iota_{k-1} + 1$ in $\lambda(j)$.  By Lemma \ref{SameCol}, every column of length $\iota_{k-1}+1$ must be filled identically in each $T(j)$.  This means that the right-most $\lambda(j)_{\iota_{k-1} + 1} - \lambda(j)_{\iota_{k-1} + 2}$ entries of row $\iota_{k-1}+1$ of $T(j)$ must be identical.  Consider those $j$ for which $\lambda(j)_{\iota_{k-1}+2} > 0$.  Then $\ell(\lambda(j)^k) = \ell(\mu^k)$ for these $j$, in which case the left-most $\lambda(j)_{\iota_{k-1}+2}$ columns of $\lambda(j)^k$, in $T(j)$, must contain the entries $\iota_{k-1} + 1$ through $\iota_k$ in sequential order, as there is no other choice.  Of these, for the $j$ satisfying $\lambda(j)_{\iota_{k-1}+1} > \lambda(j)_{\iota_{k-1}+2}$, the entries in the rest of row $\iota_{k-1}+1$ in $T(j)$ must also be $\iota_{k-1}+1$, since $\mu_{\iota_{k-1}+1} \geq \mu_{\iota_{k-1}+2}$.  For those $j$ where $\lambda(j)_{\iota_{k-1}+2} = 0$, since all entries in $T(j)$ of row $\iota_{k-1}+1$ must be the same, then every entry must also be $\iota_{k-1}+1$.  We have now shown that in this situation, every entry in row $\iota_{k-1}+1$ must be $\iota_{k-1}+1$ in every $T(j)$, and no other $\iota_{k-1}+1$ entries appear.  In this case, we define $[\lambda(j)^{k^-}]$, $[\lambda(j)^{k^+}]$, $\mu^{k-}$, and $\mu^{k+}$ by
$$ \lambda(j)^{k^-} = (\lambda(j)_{\iota_{k-1}+1}), \quad \lambda(j)^{k^+} = (\lambda(j)_{\iota_{k-1}+2}, \ldots, \lambda(j)_{\iota_k}),$$
$$ \mu^{k^-} = (\mu_{\iota_{k-1} + 1}), \quad \text{and} \quad \mu^{k^+} = (\mu_{\iota_{k-1} + 2}, \ldots, \mu_{\iota_k}),$$
and we define $\iota_{k^-} = \iota_{k-1}+1$ and $\iota_{k^+} = \iota_k$.  Note that for every $j$, the parts of $\lambda(j)^{k^+}$ satisfy $\lambda(j)_{\iota_{k-1} + 2} = \cdots = \lambda(j)_{\iota_k}$, and we also have $\widetilde{\lambda^{k^-}} \unrhd \mu^{k^-}$ and $\widetilde{\lambda^{k^+}} \unrhd \mu^{k^+}$.

Next suppose that $\lambda(j)_{\iota_k - 1} > \lambda(j)_{\iota_k}$ for at least two $j$.  The analysis of this case is very similar to the above.  We know that there is a column of length $\iota_k - 1$ in at least two $\lambda(j)$, and so by Lemma \ref{SameCol}, every column of this length must be filled identically in every $T(j)$.  For those $j$ such that $\lambda(j)_{\iota_k} > 0$, we have $\ell(\lambda(j)^k) = \ell(\mu^k)$, and so the left-most $\lambda(j)_{\iota_k}$ columns of $\lambda(j)^k$ must have the sequential entries $\iota_{k-1}+1$ through $\iota_k$ in $T(j)$.  It then follows that the right-most $\lambda(j)_{\iota_k -1} - \lambda(j)_{\iota_k}$ columns in $\lambda(j)^k$ must have the sequential entries $\iota_{k-1}+1$ through $\iota_k - 1$ in every $T(j)$, in order for $\mu^k$ to be a partition.  This must also then hold for those $j$ where $\lambda(j)_{\iota_k} = 0$.  Thus, every entry in row $\iota_k$ of every $T(j)$ must be $\iota_k$, and no other $\iota_k$ entries appear in any $T(j)$.  In this case, we define $[\lambda(j)^{k^-}]$, $[\lambda(j)^{k^+}]$, $\mu^{k^-}$, and $\mu^{k^+}$ by
$$ \lambda(j)^{k^-} = (\lambda(j)_{\iota_{k-1}+1}, \lambda(j)_{\iota_{k-1}+2}, \ldots, \lambda(j)_{\iota_k -1}), \quad \lambda(j)^{k^+} = (\lambda(j)_{\iota_k}),$$
$$ \mu^{k^-} = (\mu_{\iota_{k-1} + 1}, \mu_{\iota_{k-1} + 2}, \ldots, \mu_{\iota_k -1}), \quad \text{and} \quad \mu^{k^+} = (\mu_{\iota_k}),$$
and we define $\iota_{k^-} = \iota_{k} - 1$ and $\iota_{k^+} = \iota_k$.  For every $j$, the parts of $\lambda(j)^{k^-}$ satisfy $\lambda(j)_{\iota_{k-1}+1} = \cdots = \lambda(j)_{\iota_k -1}$, and we again have $\widetilde{\lambda^{k^-}} \unrhd \mu^{k^-}$ and $\widetilde{\lambda^{k^+}} \unrhd \mu^{k^+}$.

We define $[\lambda(j)^{k^-}]$, $[\lambda(j)^{k^+}]$, $\mu^{k^-}$, $\mu^{k^+}$, $\iota_{k^-}$, and $\iota_{k^+}$ as above for every $k$ with the property that there are either at least two $j$ such that $\lambda(j)_{\iota_{k-1}+1} > \lambda(j)_{\iota_{k-1} + 2}$ or at least two $j$ such that $\lambda(j)_{\iota_k - 1} > \lambda(j)_{\iota_k}$.  For all other $k$, we define $[\lambda(j)^{k^-}] = [\lambda(j)^{k^+}] = [\lambda(j)^k]$, $\mu^{k^-} = \mu^{k^+} = \mu^k$, and $\iota_{k^+} = \iota_{k^-} = \iota_k$.  We then define the indices
$$ \{ i_k \, \mid \, k = 1, \ldots, t \} = \{ \iota_{k^-}, \iota_{k^+} \, \mid \, k = 1, \ldots, u \},$$
where $t$ is equal to the sum of $u$ and number of $k$ such that there are at least two $j$ such that $\lambda(j)_{\iota_{k-1}+1} > \lambda(j)_{\iota_{k-1} + 2}$ or at least two $j$ such that $\lambda(j)_{\iota_k - 1} > \lambda(j)_{\iota_k}$.  By construction, we now have that the redefined $[\lambda(j)^k]$ and $\mu^k$ for $k=1, \ldots, t$, given by 
$$ \lambda(j)^k = (\lambda(j)_{i_{k-1} + 1}, \lambda(j)_{i_{k-1}+2}, \ldots, \lambda(j)_{i_k}),$$
where $\lambda(j)_i = 0$ if $i > \ell(\lambda(j))$, and
$$ \mu^k = (\mu_{i_{k-1} + 1}, \mu_{i_{k-1} + 2}, \ldots, \mu_{i_k}),$$
satisfy the required conditions. \end{proof}

In terms of computational complexity, we can give the following result generalizing the result of Narayanan \cite[Proposition 1]{Na06} and Corollary \ref{K1comp}.

\begin{corollary} \label{Kmulticomp} For any $[\lambda(j)] \in \mcP_n[r]$ and any composition $\omega$ of $n$, the questions of whether $K_{[\lambda(j)] \omega} > 0$ and $K_{[\lambda(j)] \omega} = 1$ can be answered in polynomial time.
\end{corollary}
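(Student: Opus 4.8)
The plan is to mirror the strategy of the proof of Corollary \ref{K1comp}, treating the two questions in turn and reducing a general composition $\omega$ to the partition $\mu \in \mcP_n$ obtained by sorting its parts (legitimate by Corollary \ref{PermInvar}, at the cost of a single sort, exactly as in the proof of Corollary \ref{K1comp}). For whether $K_{[\lambda(j)]\omega} > 0$: I would compute the partition $\tlam$, whose parts are $\tlam_i = \sum_{j=1}^r \lambda(j)_i$ and whose length is $\max_j \ell(\lambda(j))$, in a number of bit operations polynomial in $\mathrm{size}([\lambda(j)])$, and then apply Lemma \ref{KosSeqNonzero}, which says $K_{[\lambda(j)]\mu} > 0$ if and only if $\tlam \unrhd \mu$; the dominance test takes time $O(\mathrm{size}(\tlam,\mu))$, exactly as in \cite[Proof of Proposition 1]{Na06}.

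For whether $K_{[\lambda(j)]\omega} = 1$: after reducing to $\omega = \mu$ a partition, padding each $\lambda(j)$ with zero parts up to length $l = \ell(\mu)$, forming $\tlam$, and checking $\tlam \unrhd \mu$ (outputting FALSE at once if this fails, since then $K_{[\lambda(j)]\mu} = 0$), I would run a greedy algorithm that attempts to build the indices $0 = i_0 < i_1 < \cdots < i_t = l$ of Theorem \ref{MultiMultOne} without searching over all index sets. Having fixed $i_0, \ldots, i_{k-1}$, I would scan $i = i_{k-1}+1, i_{k-1}+2, \ldots$, maintaining the running sum $\sum_{q=i_{k-1}+1}^{i}(\tlam_q - \mu_q)$ (which stays nonnegative since $\tlam \unrhd \mu$), set $i_k$ to be the first $i$ at which this sum equals $0$, and then verify conditions (1) and (2) of Theorem \ref{MultiMultOne} on the block $[i_{k-1}+1, i_k]$: condition (1), $\widetilde{\lambda^k} \unrhd \mu^k$, is automatic from the running sum, while condition (2) — that at most one $\lambda(j)$ is non-constant on the block, and that one has one of the two prescribed staircase shapes — is a direct scan of $\lambda(j)_{i_{k-1}+1}, \ldots, \lambda(j)_{i_k}$ over all $j$. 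If condition (2) fails for some $k$, output FALSE; if the scan reaches $i = l$ with every block passing, output TRUE. Since each part $\lambda(j)_i$ and each $\mu_i$ is examined a bounded number of times, this runs in time $O(\mathrm{size}([\lambda(j)],\mu))$, and with the initial sort in time $O(\mathrm{size}([\lambda(j)],\omega)\ln(\mathrm{size}([\lambda(j)],\omega)))$ in general.

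The main obstacle is correctness of the greedy choice — showing that taking the first size-matching $i_k$ never misses a valid decomposition. The key lemma I would prove (the analogue of the pivotal observation in the proof of Corollary \ref{K1comp}) is that whenever the already fixed blocks $[1,i_1], \ldots, [i_{k-2}+1,i_{k-1}]$ are exact and valid, strict increase down columns forces, in any semistandard Young $r$-multitableau of shape $[\lambda(j)]$ and weight $\mu$, the rows past $i_{k-1}$ to contain exactly the entries past $i_{k-1}$, so that $K_{[\lambda(j)]\mu}$ equals the Kostka number of the ``remaining'' $r$-multipartition on rows $> i_{k-1}$; and within that remaining problem, any block $(i_{k-1}, b]$ with $|\widetilde\lambda_{(i_{k-1},b]}| = |\mu_{(i_{k-1},b]}|$ splits off a further Kostka-number factor. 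Granting this, if $K_{[\lambda(j)]\mu} = 1$ then Theorem \ref{MultiMultOne} supplies a valid first index $i_1'$ past $i_{k-1}$; the algorithm's first size-matching index $i_k$ satisfies $i_k \leq i_1'$; and conditions (1), (2) pass from the block $(i_{k-1}, i_1']$ down to its prefix $(i_{k-1}, i_k]$, because dominance restricts to prefixes and a prefix of either staircase shape in condition (2) is again constant or of the same staircase type. Hence the algorithm's block is valid, the remaining problem still has Kostka number $1$, and one finishes by induction on $l$; the converse — that a successful run produces indices satisfying Theorem \ref{MultiMultOne}, hence $K_{[\lambda(j)]\mu} = 1$ — is immediate. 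Beyond this, only routine bookkeeping is needed to confirm that forming $\tlam$ and all per-block tests stay within the stated bit-complexity bounds.
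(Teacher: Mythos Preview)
Your proposal is correct and follows essentially the same approach as the paper: reduce $K_{[\lambda(j)]\omega}>0$ to the dominance test $\tlam\unrhd\mu$ via Lemma~\ref{KosSeqNonzero}, and handle $K_{[\lambda(j)]\omega}=1$ by the same greedy index-finding algorithm as in Corollary~\ref{K1comp}, adapted to check the block shapes from Theorem~\ref{MultiMultOne}. Your write-up is in fact more explicit than the paper's on the correctness of the greedy choice---in particular your observation that the conditions of Theorem~\ref{MultiMultOne} pass from a block to any prefix at which the sizes match, so the \emph{first} size-matching index is always a valid cut when $K_{[\lambda(j)]\mu}=1$---whereas the paper simply asserts that ``the algorithm is essentially the same as in the proof of Corollary~\ref{K1comp}.''
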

\begin{proof}  By Corollary \ref{PermInvar} and Lemma \ref{KosSeqNonzero}, we have $K_{[\lambda(j)] \omega} > 0$ if and only if $K_{\tlam \omega} > 0$, which can be checked in polynomial time in terms of $\mathrm{size}(\tlam, \omega)$ by \cite[Proposition 1]{Na06}.  So whether $K_{[\lambda(j)] \omega} > 0$ can be checked in polynomial time in terms of $\mathrm{size}([\lambda(j)], \omega)$.

To check whether $K_{[\lambda(j)] \omega} = 1$, we first replace $\omega$ by a partition $\mu \in \mcP_n$.  As in the proof of Corollary \ref{K1comp}, we may follow an algorithm to find a single set of indices which satisfies the conditions of Theorem \ref{MultiMultOne}, while performing the calculations required to check that $\widetilde{\lambda^k} \unrhd \mu^k$ for only this set of indices.  This is on the same order of computation time as checking whether $\tilde{\lambda} \unrhd \mu$.  The algorithm is essentially the same as in the proof of Corollary \ref{K1comp}, checking whether the subpartitions $[\lambda(j)^k]$ are of the desired shape when finding a potential index.  Following such an algorithm allows us to check whether $K_{[\lambda(j)] \mu} = 1$ in time $O(\mathrm{size}([\lambda(j)], \mu))$.  When we replace $\mu$ by a composition $\omega$, then considering the computation to find the permutation which maps $\omega$ to $\mu$, the total computation time is $O(\mathrm{size}([\lambda(j)], \mu) \ln(\mathrm{size}([\lambda(j)], \mu)))$.
\end{proof}

It follows from Theorem \ref{MultiMultOne} that for any $[\lambda(j)] \in \mcP_n[r]$, we have $K_{[\lambda(j)] \tlam} = 1$.  We now give a generalization of Corollary \ref{J1partitions}, by classifying which multipartitions $[\lambda(j)]$ satisfy $K_{[\lambda(j)] \mu} = 1$ for only $\mu = \tlam$.

\begin{corollary} \label{J1Multi}  Let $[\lambda(j)] \in \mcP_n[r]$.  The only $\mu \in \mcP_n$ satisfying $K_{[\lambda(j)] \mu} = 1$ is $\mu = \tlam$ if and only if for each $i$ either (1) $\tlam_i - \tlam_{i+1} \leq 1$, or (2) there exist at least two $j$ such that $\lambda(j)_i - \lambda(j)_{i+1} \geq 1$ (where we take $\tlam_i = 0$ if $i > \ell(\tlam)$ and $\lambda(j)_i = 0$ if $i > \ell(\lambda(j))$).
\end{corollary}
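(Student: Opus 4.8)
The plan is to derive both implications from Theorem \ref{MultiMultOne}, together with Lemmas \ref{KosSeqNonzero}, \ref{KosSeq1tilde}, and \ref{ColLemma}, giving a proof independent of Corollary \ref{J1partitions}. I will use throughout that $K_{[\lambda(j)] \tlam} = 1$, which is immediate from Theorem \ref{MultiMultOne} by taking every block to have length one.

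First I would show that the stated condition on $[\lambda(j)]$ forces $\tlam$ to be the only $\mu$ with $K_{[\lambda(j)]\mu} = 1$. Suppose $K_{[\lambda(j)]\mu} = 1$, and apply Theorem \ref{MultiMultOne} to obtain indices $0 = i_0 < i_1 < \cdots < i_t = l$ and the associated subpartitions; it is enough to prove $\mu^k = \widetilde{\lambda^k}$ for every $k$, as concatenating these equalities gives $\mu = \tlam$. Fix $k$. From the factorization $K_{[\lambda(j)]\mu} = \prod_k K_{[\lambda(j)^k]\mu^k}$ established in the proof of Theorem \ref{MultiMultOne}, each factor is $1$, so Lemma \ref{KosSeq1tilde} gives $K_{\widetilde{\lambda^k}\mu^k} = 1$. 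Since $\widetilde{\lambda^k}$ has parts $\widetilde{\lambda^k}_a = \tlam_{i_{k-1}+a}$, condition (2) of Theorem \ref{MultiMultOne} shows these parts are either all equal or have exactly one strict descent, which occurs between the first two or the last two parts of $\widetilde{\lambda^k}$, at the row where the unique ``special'' $\lambda(j)$ descends and of the same size (every other $\lambda(j)$ being constant on the block). If $\widetilde{\lambda^k}$ is constant, Lemma \ref{ColLemma}(1) gives $\mu^k = \widetilde{\lambda^k}$; if its single descent has size $1$, then the relevant case of Lemma \ref{ColLemma}(2) or (3) again gives $\mu^k = \widetilde{\lambda^k}$. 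In the only remaining case, $\widetilde{\lambda^k}$ has a single descent of size $\geq 2$ between global rows $i$ and $i+1$, both of which lie in block $k$ with $i$ strictly interior to it; then $\tlam_i - \tlam_{i+1} \geq 2$, so clause (1) of the Corollary fails at $i$, while only the one special $\lambda(j)$ can descend between rows $i$ and $i+1$, so clause (2) fails at $i$ as well, contradicting the hypothesis. Hence $\mu^k = \widetilde{\lambda^k}$ in every case, and $\mu = \tlam$.

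For the reverse implication I would argue by contraposition. Suppose some index $i^*$ has $\tlam_{i^*} - \tlam_{i^*+1} \geq 2$ while at most one $j$ satisfies $\lambda(j)_{i^*} > \lambda(j)_{i^*+1}$; since $\tlam$ descends at $i^*$, exactly one such $j$ exists, say $j^*$, and $\lambda(j^*)_{i^*} - \lambda(j^*)_{i^*+1} \geq 2$. Define $\mu$ by moving one box from row $i^*$ to row $i^*+1$: set $\mu_{i^*} = \tlam_{i^*} - 1$, $\mu_{i^*+1} = \tlam_{i^*+1} + 1$, and $\mu_i = \tlam_i$ otherwise. The bound $\tlam_{i^*} - \tlam_{i^*+1} \geq 2$ makes $\mu$ a partition of $n$ distinct from $\tlam$. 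Now take the index set consisting of all singletons together with the one block $\{i^*, i^*+1\}$. For the singleton blocks, conditions (1) and (2) of Theorem \ref{MultiMultOne} hold vacuously; for $\{i^*, i^*+1\}$, condition (1) amounts to $(\tlam_{i^*}, \tlam_{i^*+1}) \unrhd (\tlam_{i^*}-1, \tlam_{i^*+1}+1)$, which holds, and condition (2) holds because $(\lambda(j^*)_{i^*}, \lambda(j^*)_{i^*+1})$ has the required descending two-row shape as the single special partition, while $(\lambda(j)_{i^*}, \lambda(j)_{i^*+1})$ is constant for every other $j$. By Theorem \ref{MultiMultOne}, $K_{[\lambda(j)]\mu} = 1$, so $\tlam$ is not the only partition with this property.

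The step I expect to require the most care is the descent analysis in the first implication: confirming that the descent of $\widetilde{\lambda^k}$ agrees in location and size with that of the single special $\lambda(j)$, that this location is strictly interior to its block so that the constancy of the remaining $\lambda(j)$ across it may be used, and that Lemma \ref{ColLemma} applies to each block including the degenerate subcases in which some parts vanish. A minor point in the converse is checking that the modified $\mu$ is still a partition, which is immediate from $\tlam$ being one and the descent having size at least $2$.
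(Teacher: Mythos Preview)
Your proposal is correct and follows essentially the same route as the paper. The converse direction is identical: both construct $\mu$ by moving one box from row $i^*$ to row $i^*+1$ and verify the conditions of Theorem~\ref{MultiMultOne} using the singleton blocks together with the single two-row block $\{i^*,i^*+1\}$. In the forward direction both arguments extract indices from Theorem~\ref{MultiMultOne}, observe that within each block at most one $\lambda(j)$ is non-constant, and use the hypothesis to force that descent to have size at most $1$; the only cosmetic difference is that you apply the ``$\mu=\lambda$'' clause of Lemma~\ref{ColLemma} directly to $\widetilde{\lambda^k}$ (after noting $\widetilde{\lambda^k}$ inherits the descent pattern of the special $\lambda(j)$), whereas the paper subtracts off the constant $\lambda(j)$'s and applies Lemma~\ref{ColLemma} to the special $\lambda(j')^k$ itself. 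Your invocation of Lemma~\ref{KosSeq1tilde} is harmless but actually redundant, since the conclusion $\mu^k=\widetilde{\lambda^k}$ in Lemma~\ref{ColLemma} only requires $\widetilde{\lambda^k}\unrhd\mu^k$ and the shape condition, both of which you already have from Theorem~\ref{MultiMultOne}.
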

\begin{proof}  First suppose that there exists some $i =i'$, which we fix, such that $\tlam_{i'} - \tlam_{i'+1} > 1$ and $\lambda(j)_{i'} - \lambda(j)_{i'+1} \geq 1$ for at most one $j$.  This implies that $\lambda(j)_{i'} - \lambda(j)_{i'+1} \geq 1$ for a unique $j=j'$, and so $\tlam_{i'} - \tlam_{i'+1} = \lambda(j')_{i'} - \lambda(j')_{i'+1} > 1$.  We will construct a $\mu \in \mcP_n$ such that $\mu \neq \tlam$ and $K_{[\lambda(j)] \mu} = 1$.  Define $\mu$ to have parts $\mu_i = \tlam_i$ if $i \neq i', i'+1$, $\mu_{i'} = \tlam_{i'} -1$, and $\mu_{i'+1} = \tlam_{i'+1} + 1$.
Note that $\mu$ is a well-defined partition since $\tlam_{i'} - \tlam_{i'+1} \geq 2$.  If $\ell(\tlam) = l$, then $\ell(\mu) = l$ by definition.  Choose indices as in Theorem \ref{MultiMultOne} so that $t = l-1$,  with $\{i_k \, \mid \, k=1, \ldots, l-1\} = \{1, \ldots, i'-1, i'+1, i'+2, \ldots, l \}$.  Then for $k = 1, \ldots, l-1$, $[\lambda(j)^k]$ is given by
$$ \lambda(j)^k = \left\{ \begin{array}{lll} (\lambda(j)_k) & \text{ if } k < i', \\ (\lambda(j)_k, \lambda(j)_{k+1}) & \text{ if } k = i', \\ (\lambda(j)_{k+1}) & \text{ if } k > i', \end{array} \right.$$
and $\mu^k$ is given by
$$ \mu^k = \left\{ \begin{array}{ll} (\mu_k) & \text{ if } k < i', \\ (\mu_k, \mu_{k+1}) & \text{ if } k = i', \\ (\mu_{k+1}) & \text{ if } k > i'. \end{array}\right.$$
Now $\widetilde{\lambda^k} = \mu^k$ if $k \neq i'$, while $\mu^{i'} = (\tlam_{i'} - 1, \tlam_{i' +1} + 1) \neq \widetilde{\lambda^{i'}}$.
These indices satisfy the conditions of Theorem \ref{MultiMultOne} and so $K_{[\lambda(j)] \mu} = 1$, while $\mu \neq \tlam$.

We now suppose that for every $i$, either $\tlam_i - \tlam_{i+1} \leq 1$, or there are at least two $j$ such that $\lambda(j)_i - \lambda(j)_{i+1} \geq 1$, and we assume that $\mu \in \mcP_n$ satisfies $K_{[\lambda(j)] \mu} = 1$, and we must show $\mu = \tlam$.  Let $0 = i_0 < i_1 < \cdots < i_t = l$, where $\ell(\mu) = l$, be a choice of indices satisfying Theorem \ref{MultiMultOne}, with the accompanying $[\lambda(j)^k]$ and $\mu^k$ for $k= 1, \ldots, t$.    Let $[T(j)]$ be the unique semistandard Young $r$-multitableau of shape $[\lambda(j)]$ and weight $\mu$.  We know from the first paragraph of the proof of Theorem \ref{MultiMultOne} that $K_{[\lambda(j)^k] \mu^k} = 1$ for $k = 1, \ldots, t$, and we can consider each $[T(j)^k]$, the unique semistandard Young $r$-multitableau of shape $[\lambda(j)]^k$ and weight $\mu^k$.  It is enough to show that $\mu^k = \widetilde{\lambda^k}$ for each $k$.  Note that if $i$ is such that $\lambda(j)_i > \lambda(j)_{i+1}$ for at least two $j$, then we must have $i_k = i$ for some $k$, otherwise some $[\lambda(j)^k]$ will not satisfy the second condition of Theorem \ref{MultiMultOne}.  That is, for every $k$, we have that every $\lambda(j)^k$ has at most two distinct part sizes, and $\lambda(j)^k$ has exactly two distinct part sizes for at most one $j$.  By condition (1) above, these two part sizes can only differ by $1$.  For those $j$ where all part sizes of $\lambda(j)^k$ are equal, we know that in $T(j)$, each column of $\lambda(j)^k$ must have the entries $i_{k-1} + 1$ through $i_k$ in sequential order.  If every $j$ satisfies this, we have $\widetilde{\lambda^k} = \mu^k$ and we are done.  If $j'$ is such that $\lambda(j')^k$ has two distinct part sizes which differ by $1$, define $\mu^{k*}$ to have parts 
$$ \mu^{k*}_i = \mu^k_i - \sum_{j \neq j'} \lambda(j)^k_i.$$
Then we have $\lambda(j')^k \unrhd \mu^{k*}$, and $\lambda(j')^k$ has parts satisfying one of the conditions of Lemma \ref{ColLemma}, where distinct part sizes differ by $1$.  It follows from Lemma \ref{ColLemma} that $\lambda(j')^k = \mu^{k*}$, and so $\widetilde{\lambda^k} = \mu^k$. \end{proof}

\section{An Application to Finite General Linear Groups} \label{GLnq}

Let $\FF_q$ be a finite field with $q$ elements.  In this section we consider the group $G = \GL(n,\FF_q)$ of invertible $n$-by-$n$ matrices over $\FF_q$.  The complex irreducible characters of $G$ were first described by Green \cite{Gr55}.  We give a parameterization of the characters here which follows Macdonald \cite[Chapter IV]{Ma95}.

If $\mcO$ is a collection of finite sets, and $n$ is a non-negative integer, then an $\mcO$-multipartition of $n$ is a function $\blam: \mcO \rightarrow \mcP$ such that 
$$|\blam| = \sum_{\phi \in \mcO} |\phi| |\blam(\phi)| = n.$$
So, an $r$-multipartition can be viewed as an $\mcO$-multipartition if we take $\mcO = \{ \{1\}, \ldots, \{r\} \}$.  We let $\mcP_n^{\mcO}$ denote the set of all $\mcO$-multipartitions of $n$, and $\mcP^{\mcO} = \bigcup_{n \geq 0} \mcP_n^{\mcO}$.

Fix an algebraic closure $\bar{\FF}_q$ of $\FF_q$, and let $\bar{\FF}_q^{\times}$ be the multiplicative group of $\bar{\FF}_q$.  Then the Frobenius map $F$, defined by $F(a) = a^q$, acts on $\bar{\FF}_q^{\times}$, where the set of fixed points of $F^m$ is exactly $\FF_{q^m}^{\times}$, where $m>0$ is an integer.  Let $\hat{\FF}_{q^m}^{\times}$ denote the group of complex characters of $\FF_{q^m}^{\times}$.  When $k|m$, there is the standard norm map from $\FF_{q^m}^{\times}$ down to $\FF_{q^k}^{\times}$, which gives rise to the transposed norm map of character groups from $\hat{\FF}_{q^k}^{\times}$ to $\hat{\FF}_{q^m}^{\times}$.  We define $\mcX$ to be the direct limit of the character groups $\hat{\FF}_{q^m}^{\times}$ with respect to these norm maps, 
$$ \mcX = \lim_{\rightarrow} \hat{\FF}_{q^m}^{\times}.$$
The set $\mcX$ is in some since a dual of $\bar{\FF}_q^{\times}$, by taking consistent (non-canonical) bijections between each $\FF_{q^m}^{\times}$ and its character group.  The Frobenius map $F$ acts on $\mcX$ through the action on $\bar{\FF}_q^{\times}$, and we define $\Theta$ to be the collection of $F$-orbits of $\mcX$.  The irreducible complex characters of $G = \GL(n,\FF_q)$ are parameterized by the set $\mcP_n^{\Theta}$ of all $\Theta$-multipartitions of $n$.  Given $\blam \in \mcP^{\Theta}_n$, we let $\chi^{\blam} \in \Irr(G)$ denote the complex irreducible character of $\GL(n,\FF_q)$ to which it corresponds (not to be confused with characters of the symmetric group, which are not used in this section).
\\
\\
\noindent
{\bf Remark.  }  The set of orbits $\Theta$ is in cardinality-preserving bijection with the set of $F$-orbits of $\bar{\FF}_q^{\times}$, which are in turn in bijection with monic non-constant irreducible polynomials over $\FF_q$, where the degree of the polynomial is the same as the cardinality of the corresponding orbit.
\\

Let $U$ denote the group of unipotent upper triangular matrices in $G$, so 
$$ U = \{ (u_{ij}) \in G \, \mid \, u_{ii} = 1, u_{ij} = 0 \text{ if } i > j \}.$$
Fix a linear character $\theta: \FF_q^{+} \rightarrow \C^{\times}$ from the additive group of $\FF_q$ to the multiplicative group of complex numbers.  Given any $\mu \in \mcP_n$, define $I_{\mu}$ to be the complement in $\{1, 2, \ldots, n\}$ of the set of partial sums of $\mu$, so 
$$I_{\mu} = \{ 1, 2, \ldots, n \} \setminus \left\{ \sum_{i=1}^j \mu_i \, \mid \,  1 \leq j \leq \ell(\mu) \right\}  .$$
Define the linear character $\kappa_{\mu}$ of $U$ by
$$ \kappa_{\mu}((u_{ij})) = \theta \left( \sum_{ i \in I_{\mu} } u_{i, i+1} \right).$$
The \emph{degenerate Gel'fand-Graev character} $\Gamma_{\mu}$ of $G$ is now defined as
$$ \Gamma_{\mu} = \Ind_{U}^G(\kappa_{\mu}).$$

Gel'fand and Graev \cite{GeGr62} first considered $\Gamma_{\mu}$ for $\mu = (n)$, and proved its decomposition into irreducibles is multiplicity free.  Zelevinsky \cite[Theorem 12.1]{Zel81} decomposed $\Gamma_{\mu}$ for arbitrary $\mu$ as a linear combination of the irreducible characters of $G$, which in order to describe we must define another variant of the Kostka number.  Given $\blam \in \mcP_n^{\Theta}$, let $\Theta_{\blam} = \{ \varphi \in \Theta \, \mid \, \blam(\varphi) \neq \varnothing \}$, so $\Theta_{\blam}$ is the support of $\blam$.  Given $\mu \in \mcP_n$, a \emph{semistandard Young $\Theta$-multitableau of shape $\blam$ and weight $\mu$}, call it $\bT$, is a sequence of semistandard Young tableaux $[\bT(\varphi)]$ indexed by $\varphi \in \Theta_{\blam}$, such that each $\bT(\varphi)$ has shape $\blam(\varphi)$, and if $\omega(\varphi)$ is the weight of $\bT(\varphi)$, then the weight $\mu$ of $\bT$ has parts 
$$\mu_i = \sum_{\varphi \in \Theta_{\blam}} |\varphi| \omega(\phi)_i.$$
Then the Kostka number $K_{\blam \mu}$ is defined to be the total number of semistandard Young $\Theta$-multitableaux of shape $\blam$ and weight $\mu$.

\begin{theorem}[Zelevinsky] \label{ZelDecomp}
Given any $\mu \in \mcP_n$ and any $\chi^{\blam} \in \Irr(G)$, we have $\langle \Gamma_{\mu}, \chi^{\blam} \rangle = K_{\blam \mu}$.  That is, the degenerate Gel'fand-Graev character $\Gamma_{\mu}$ has the decomposition
$$ \Gamma_{\mu} = \sum_{\blam \in \mcP_n^{\Theta}} K_{\blam \mu} \chi^{\blam}.$$
\end{theorem}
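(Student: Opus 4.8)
The plan is to place $\Gamma_{\mu}$ inside Green's parametrizing ring and expand it by iterated Pieri, in the same spirit as the proof of Theorem~\ref{YoungWreath}. Recall from \cite[Ch.~IV]{Ma95} (following \cite{Gr55}) that $R = \bigoplus_{n\ge 0} R(\GL(n,\FF_q))$, with multiplication given by Harish-Chandra (parabolic) induction $R^G_L$, is a graded ring, and that there is a ring isomorphism $\ch$ from $R$ onto $\bigotimes_{\varphi\in\Theta}\Lambda_{\varphi}$, where each $\Lambda_{\varphi}$ is a copy of the ring of symmetric functions, graded so that a Schur function $s_{\lambda}$ in the $\varphi$-factor has degree $|\varphi|\,|\lambda|$, and under which $\ch(\chi^{\blam}) = \prod_{\varphi\in\Theta} s_{\blam(\varphi)}$; we write $h^{(\varphi)}_{c} = s^{(\varphi)}_{(c)}$ for the complete symmetric functions in the $\varphi$-variables. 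Since $\ch$ is a ring isomorphism carrying the natural inner product on $R$ to the Hall inner product, the elements $\ch(\chi^{\blam})$ form an orthonormal basis and $\langle\Gamma_{\mu},\chi^{\blam}\rangle$ is exactly the coefficient of $\prod_{\varphi}s_{\blam(\varphi)}$ in $\ch(\Gamma_{\mu})$.

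First I would show that $\Gamma_{\mu}$ is a Harish-Chandra induction of a product of non-degenerate Gel'fand--Graev characters, so that $\ch$ turns it into a product. Writing $\mu=(\mu_1,\dots,\mu_\ell)$, let $L=\GL(\mu_1,\FF_q)\times\cdots\times\GL(\mu_\ell,\FF_q)$ be the standard Levi of the parabolic $P=LV$ with unipotent radical $V$, and write $U=U_LV$ with $U_L$ the upper unitriangular subgroup of $L$. The definition of $I_{\mu}$ as the complement of the partial sums of $\mu$ shows that the only entries $u_{i,i+1}$ occurring in $\kappa_{\mu}$ are the within-block superdiagonal ones, so $\kappa_{\mu}$ is trivial on $V$ and its restriction to $U_L$ is the external product $\kappa_{(\mu_1)}\boxtimes\cdots\boxtimes\kappa_{(\mu_\ell)}$ of the non-degenerate characters of the diagonal blocks; hence $\kappa_{\mu}$ is inflated from $U_L\le L$ through $P$, and inducing in stages gives
\[
\Gamma_{\mu}=\Ind_U^G(\kappa_{\mu})=R^G_L\bigl(\Gamma_{(\mu_1)}\boxtimes\cdots\boxtimes\Gamma_{(\mu_\ell)}\bigr).
\]
Because $R^G_L$ is precisely the multiplication of $R$, applying $\ch$ yields $\ch(\Gamma_{\mu})=\prod_{i=1}^{\ell}\ch(\Gamma_{(\mu_i)})$.

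The crux is to identify $\ch(\Gamma_{(m)})$ for the single non-degenerate Gel'fand--Graev character of $\GL(m,\FF_q)$: I claim it equals the degree-$m$ component (in the twisted grading above) of $\prod_{\varphi\in\Theta}\bigl(\sum_{c\ge 0}h^{(\varphi)}_{c}\bigr)$, equivalently $\ch(\Gamma_{(m)})=\sum\prod_{\varphi}s_{\blam(\varphi)}$ summed over $\blam\in\mcP_m^{\Theta}$ with every $\blam(\varphi)$ a single row. This is where the genuine representation theory enters: it encodes Gel'fand and Graev's multiplicity-one theorem \cite{GeGr62} together with the determination of the support, and in Zelevinsky's framework \cite[Ch.~III]{Zel81} it is the statement that $\bigoplus_m\Gamma_{(m)}$ furnishes the ``$h$-type'' generators of the PSH-ring $R$; it can also be pinned down by checking the case $\GL(1,\FF_q)$, where $\Gamma_{(1)}$ is the regular representation of $\FF_q^{\times}\cong\GL(1,\FF_q)$ and so $\ch(\Gamma_{(1)})=\sum_{|\varphi|=1}h_1^{(\varphi)}$, and propagating via the structure theory of PSH-rings. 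I expect this identification to be the main obstacle; the remaining steps are the formal symmetric-function bookkeeping already exercised in Section~\ref{wreath}. A subsidiary technical point is the explicit verification, in the previous paragraph, that $\kappa_\mu$ is trivial on $V$ and block-diagonal on $U_L$.

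Granting this, I would finish by expanding the product. Combining the two factorizations,
\[
\ch(\Gamma_{\mu})=\prod_{i=1}^{\ell}\Bigl[\ \prod_{\varphi\in\Theta}\Bigl(\,\sum_{c\ge 0}h^{(\varphi)}_{c}\Bigr)\Bigr]_{\mu_i},
\]
where $[\,\cdot\,]_{m}$ denotes the degree-$m$ graded component. After interchanging the product over $i$ with the product over $\varphi$, a typical term is a monomial $\prod_{\varphi\in\Theta}h^{(\varphi)}_{\omega(\varphi)_1}\cdots h^{(\varphi)}_{\omega(\varphi)_\ell}$ indexed by a family of length-$\ell$ compositions $(\omega(\varphi))_{\varphi\in\Theta}$ subject to $\sum_{\varphi\in\Theta}|\varphi|\,\omega(\varphi)_i=\mu_i$ for every $i$. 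Expanding $h^{(\varphi)}_{\omega(\varphi)_1}\cdots h^{(\varphi)}_{\omega(\varphi)_\ell}=\sum_{\blam(\varphi)}K_{\blam(\varphi)\,\omega(\varphi)}\,s^{(\varphi)}_{\blam(\varphi)}$ in each $\Lambda_{\varphi}$ by \eqref{KostkaSymm} (valid for compositions), multiplying over $\varphi$, and reading off the coefficient of $\prod_{\varphi}s^{(\varphi)}_{\blam(\varphi)}=\ch(\chi^{\blam})$ gives $\sum_{(\omega(\varphi))}\prod_{\varphi}K_{\blam(\varphi)\,\omega(\varphi)}$, the sum over the admissible families. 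By the very definition of a semistandard Young $\Theta$-multitableau of shape $\blam$ and weight $\mu$ — the $\Theta$-analogue of Proposition~\ref{compdecomp} — this count is exactly $K_{\blam\mu}$. Hence $\langle\Gamma_{\mu},\chi^{\blam}\rangle=K_{\blam\mu}$ and $\Gamma_{\mu}=\sum_{\blam\in\mcP_n^{\Theta}}K_{\blam\mu}\,\chi^{\blam}$.
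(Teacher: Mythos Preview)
The paper does not give its own proof of this theorem: it is stated as Zelevinsky's result with a citation to \cite[Theorem 12.1]{Zel81}, and the paper simply uses it as input to Corollary~\ref{ZelCor}. So there is no ``paper's proof'' to compare against; what you have written is a sketch of Zelevinsky's argument itself, filtered through Macdonald's formalism.

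As such a sketch, your outline is sound and follows the standard route: factor $\Gamma_{\mu}$ as a parabolic induction of non-degenerate Gel'fand--Graev characters along the Levi $L=\prod_i\GL(\mu_i,\FF_q)$, push through the characteristic map $\ch$ to land in $\bigotimes_{\varphi}\Lambda_{\varphi}$, and then expand via Pieri in each tensor factor. The bookkeeping at the end---summing over families $(\omega(\varphi))_{\varphi}$ with $\sum_{\varphi}|\varphi|\,\omega(\varphi)_i=\mu_i$ and recognizing the result as $K_{\blam\mu}$---is exactly right and matches the definition of weight for $\Theta$-multitableaux given in the paper. Your identification of the constituents of $\Gamma_{(m)}$ as the $\chi^{\blam}$ with every $\blam(\varphi)$ a single row is consistent with the paper's conventions (indeed, the paper's remark after Theorem~\ref{ZelDecomp} that $K_{\blam\mu}=1$ when $\mu_i=\sum_{\varphi}|\varphi|\blam(\varphi)_i$ specializes at $\mu=(n)$ to precisely this statement).

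You correctly flag the one genuine input: the identification of $\ch(\Gamma_{(m)})$. This is not a formal consequence of the ring structure alone and really does require either the Gel'fand--Graev multiplicity-one theorem together with knowledge of the support, or Zelevinsky's PSH-ring classification. Your proposed verification via $\GL(1,\FF_q)$ plus PSH structure is how Zelevinsky does it, but note that the propagation step is not automatic: one must know that $\bigoplus_m\Gamma_{(m)}$ is primitive under comultiplication and positive, which is a separate calculation (essentially a Mackey-type computation for the restriction of $\Gamma_{(m)}$ to a Levi). If you intend this as a self-contained proof rather than a pointer to \cite{Zel81}, that step would need to be filled in.
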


Zelevinsky noted that if $\mu$ has parts $\mu_i = \sum_{\varphi \in \Theta} |\varphi| \blam(\varphi)_i$, then $K_{\blam \mu} = 1$, and he applied this to show that every irreducible character of $G$ has Schur index 1 \cite[Proposition 12.6]{Zel81}.

If $\blam \in \mcP_n^{\Theta}$, let $[\blam(\varphi)]$ denote the $|\Theta_{\blam}|$-multipartition indexed by $\varphi \in \Theta_{\blam}$ (where we fix some order of the elements of $\Theta_{\blam}$).  So if $|\Theta_{\blam}| = r$, then $[\blam(\varphi)] \in \mcP[r]$, and we may consider the partition $\tilde{\blam}$, where $|\tilde{\blam}| = |[\blam(\varphi)]|$.  If $w > 0$ is an integer, and $\mu \in \mcP_n$ is such that $w$ divides every part of $\mu$, then we let $\mu/w$ denote the partition with parts $\mu_i/w$.  Now if $\blam \in \mcP_n^{\Theta}$ has the property that $w$ divides $|\varphi|$ for every $\varphi \in \Theta_{\blam}$, then note that we have $|\tilde{\blam}| = |[\blam(\varphi)]| = |\mu/w|$.  By restricting our attention to those $\chi^{\blam}$ such that all $\varphi \in \Theta_{\blam}$ have the same size, we may apply our main results Theorem \ref{MultiMultOne} and Corollary \ref{J1Multi} to Theorem \ref{ZelDecomp} to obtain the following.

\begin{corollary} \label{ZelCor}
Let $\mu \in \mcP_n$, and suppose $\blam \in \mcP_n^{\Theta}$ is such that there exists an integer $w >0$ such that $|\varphi| = w$ for every $\varphi \in \Theta_{\blam}$, and say $|\Theta_{\blam}| = r$.  Then the following statements hold.
\begin{enumerate}
\item $\langle \Gamma_{\mu}, \chi^{\blam} \rangle = 1$ if and only if $[\blam(\varphi)] \in \mcP_{n/w}[r]$ and $\mu/w \in \mcP_{n/w}$ satisfy the conditions of Theorem \ref{MultiMultOne}.

\item Suppose $[\blam(\varphi)] \in \mcP_{n/w}[r]$ satisfies the conditions of Corollary \ref{J1Multi}.  Then $\langle \Gamma_{\mu}, \chi^{\blam} \rangle = 1$ if and only if $\mu/w = \tilde{\blam}$.
\end{enumerate}
\end{corollary}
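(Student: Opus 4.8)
The plan is to reduce everything to the ordinary multipartition Kostka numbers of Section \ref{Multi1} via Zelevinsky's decomposition (Theorem \ref{ZelDecomp}), and then quote Theorem \ref{MultiMultOne} and Corollary \ref{J1Multi} essentially verbatim. The one piece of actual work is the identification $K_{\blam\mu} = K_{[\blam(\varphi)]\,(\mu/w)}$ when $w$ divides every part of $\mu$, and $K_{\blam\mu} = 0$ otherwise. To obtain this, first I would unwind the definition of a semistandard Young $\Theta$-multitableau $\bT = [\bT(\varphi)]$ of shape $\blam$ and weight $\mu$: writing $\omega(\varphi)$ for the weight of $\bT(\varphi)$, the defining relation $\mu_i = \sum_{\varphi \in \Theta_{\blam}} |\varphi|\,\omega(\varphi)_i$ becomes $\mu_i = w\sum_{\varphi \in \Theta_{\blam}} \omega(\varphi)_i$ under the hypothesis $|\varphi| = w$ for all $\varphi \in \Theta_{\blam}$. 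Two things follow. First, if any such $\bT$ exists then $w \mid \mu_i$ for every $i$; equivalently, if $w \nmid \mu_i$ for some $i$ then $K_{\blam\mu} = 0$, hence $\langle \Gamma_\mu, \chi^{\blam}\rangle = 0$ by Theorem \ref{ZelDecomp}. Second, when $w \mid \mu_i$ for all $i$ (so in particular $w \mid n$ and $\mu/w \in \mcP_{n/w}$), the assignment $[\bT(\varphi)] \mapsto [\bT(\varphi)]$, now reading the family as an $r$-multitableau with $r = |\Theta_{\blam}|$, is a bijection from the semistandard Young $\Theta$-multitableaux of shape $\blam$ and weight $\mu$ onto the semistandard Young $r$-multitableaux of shape $[\blam(\varphi)]$ and weight $\mu/w$, since the weight condition is now exactly $(\mu/w)_i = \sum_{\varphi} \omega(\varphi)_i$. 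Here $|[\blam(\varphi)]| = \sum_\varphi |\blam(\varphi)| = \sum_\varphi |\varphi||\blam(\varphi)|/w = n/w = |\mu/w| = |\tilde{\blam}|$, so all objects live in $\mcP_{n/w}$, and therefore $K_{\blam\mu} = K_{[\blam(\varphi)]\,(\mu/w)}$ in this case.

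For part (1), by Theorem \ref{ZelDecomp} we have $\langle \Gamma_\mu, \chi^{\blam}\rangle = 1$ if and only if $K_{\blam\mu} = 1$. If this holds, then $K_{\blam\mu} \neq 0$ forces $w \mid \mu_i$ for all $i$, so $\mu/w \in \mcP_{n/w}$, $[\blam(\varphi)] \in \mcP_{n/w}[r]$, and $K_{[\blam(\varphi)]\,(\mu/w)} = 1$, which by Theorem \ref{MultiMultOne} is equivalent to $[\blam(\varphi)]$ and $\mu/w$ satisfying the listed conditions. Conversely, if $[\blam(\varphi)] \in \mcP_{n/w}[r]$ and $\mu/w \in \mcP_{n/w}$ satisfy those conditions, then $\mu/w$ is in particular a partition, so $w \mid \mu_i$ for all $i$, and Theorem \ref{MultiMultOne} gives $K_{\blam\mu} = K_{[\blam(\varphi)]\,(\mu/w)} = 1$.

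For part (2), assume $[\blam(\varphi)] \in \mcP_{n/w}[r]$ satisfies the conditions of Corollary \ref{J1Multi}, so that corollary says the unique $\nu \in \mcP_{n/w}$ with $K_{[\blam(\varphi)]\,\nu} = 1$ is $\nu = \tilde{\blam}$. If $\langle \Gamma_\mu, \chi^{\blam}\rangle = 1$, then exactly as in part (1) we obtain $w \mid \mu_i$ for all $i$ and $K_{[\blam(\varphi)]\,(\mu/w)} = 1$, hence $\mu/w = \tilde{\blam}$. Conversely, if $\mu/w = \tilde{\blam}$ then $\mu = w\tilde{\blam}$ has all parts divisible by $w$, and $K_{\blam\mu} = K_{[\blam(\varphi)]\,\tilde{\blam}} = 1$ by the observation preceding Corollary \ref{J1Multi}, so $\langle \Gamma_\mu, \chi^{\blam}\rangle = 1$.

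I do not anticipate a serious obstacle: all the content sits in the bijection of the first paragraph, which is just careful bookkeeping. The crucial role of the hypothesis $|\varphi| = w$ for every $\varphi \in \Theta_{\blam}$ is that it collapses the $|\varphi|$-weighting built into the $\Theta$-Kostka number $K_{\blam\mu}$ into a single uniform rescaling of $\mu$ by $w$; once that is done, Theorem \ref{MultiMultOne} and Corollary \ref{J1Multi} apply directly. Without uniformity this reduction is unavailable, and indeed that is precisely the sort of case that Proposition \ref{NP} shows to be genuinely hard.
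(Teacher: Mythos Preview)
Your proposal is correct and follows essentially the same approach as the paper, which states the corollary without an explicit proof, relying on the setup just preceding it (the identification $|\tilde{\blam}| = |[\blam(\varphi)]| = |\mu/w|$ and the remark that Theorem~\ref{MultiMultOne} and Corollary~\ref{J1Multi} may be applied to Theorem~\ref{ZelDecomp}). Your first paragraph spells out precisely the reduction $K_{\blam\mu} = K_{[\blam(\varphi)]\,(\mu/w)}$ (and $K_{\blam\mu} = 0$ when $w \nmid \mu_i$ for some $i$) that the paper leaves implicit, and the rest is exactly the intended application.
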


The next natural problem is to understand the Kostka numbers $K_{\blam \mu}$ in the general case, when the orbits $\varphi \in \Theta_{\blam}$ have arbitrary sizes.  This appears to be a much more difficult problem, which is perhaps reflected when considering complexity.  As we see now, even in a simple case, the computational complexity of checking whether $K_{\blam \mu} >0$ is in stark contrast with Corollary \ref{Kmulticomp}.

\begin{proposition} \label{NP} Let $\blam \in \mcP_n^{\Theta}$ and $\mu \in \mcP_n$.  The problem of determining whether $K_{\blam \mu} > 0$, even if $\ell(\mu) = 2$ and $\blam(\varphi) = (1)$ for all $\varphi \in \Theta_{\blam}$, is $NP$-complete.
\end{proposition}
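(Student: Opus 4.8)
The plan is to show that, in the restricted situation of the statement, deciding whether $K_{\blam\mu}>0$ is exactly a \textsc{Subset Sum} question, and then to reduce \textsc{Partition} to it.

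First I would unwind Zelevinsky's description of $\Gamma_\mu$ (Theorem \ref{ZelDecomp}). Take $\mu=(\mu_1,\mu_2)$ with $\mu_1\ge\mu_2\ge 1$ and assume $\blam(\varphi)=(1)$ for every $\varphi\in\Theta_{\blam}$. A semistandard Young $\Theta$-multitableau of shape $\blam$ then consists of a single box $\bT(\varphi)$ for each $\varphi\in\Theta_{\blam}$, filled with some positive integer $a_\varphi$; its weight $\omega(\varphi)$ is the $a_\varphi$-th standard basis vector, and the weight condition $\mu_i=\sum_{\varphi}|\varphi|\,\omega(\varphi)_i$ forces $a_\varphi\in\{1,2\}$ for all $\varphi$ and $\sum_{a_\varphi=1}|\varphi|=\mu_1$. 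So such a multitableau is precisely a way of $2$-colouring the finite set $\Theta_{\blam}$ so that the orbit sizes in one colour class add up to $\mu_1$, and
$$ K_{\blam\mu}>0 \quad\Longleftrightarrow\quad \text{the multiset } \{\,|\varphi|:\varphi\in\Theta_{\blam}\,\}\text{ has a sub-multiset of sum }\mu_1. $$
Taking as input the defining data of $\blam$, namely the orbit sizes $|\varphi|$ and the partitions $\blam(\varphi)$ for $\varphi\in\Theta_{\blam}$ (on which $K_{\blam\mu}$ alone depends, as with the multipartition input in Corollary \ref{Kmulticomp}), this already places the restricted problem in $NP$: a certificate is the subset of $\Theta_{\blam}$ coloured $1$, and checking it only requires summing the relevant orbit sizes.

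For $NP$-hardness I would reduce from \textsc{Partition}: given positive integers $a_1,\dots,a_m$ with $\sum_i a_i=2B$, decide whether some $S\subseteq\{1,\dots,m\}$ satisfies $\sum_{i\in S}a_i=B$; we may assume $m\ge 3$. Choose a prime $q$ with $m<q\le 2m$, so $q$ is written with $O(\log m)$ bits. The number of orbits of $\Theta$ of cardinality $d$ equals the number of monic irreducible polynomials of degree $d$ over $\FF_q$ (omitting $x$), which is $q-1$ for $d=1$ and at least $(q^d-2q^{d/2})/d$ for $d\ge 2$; in either case this is at least $m$. Hence we can pick distinct orbits $\varphi_1,\dots,\varphi_m\in\Theta$ with $|\varphi_i|=a_i$. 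Let $\blam\in\mcP_{2B}^{\Theta}$ be supported on $\{\varphi_1,\dots,\varphi_m\}$ with $\blam(\varphi_i)=(1)$ for all $i$, and set $\mu=(B,B)$, so $\ell(\mu)=2$. By the equivalence above, $K_{\blam\mu}>0$ if and only if the given \textsc{Partition} instance is a yes-instance, and since the pair $(\blam,\mu)$ is determined by the integers $a_1,\dots,a_m$, $B$ and $q$, the reduction runs in polynomial time. Together with membership in $NP$ this shows the restricted problem is $NP$-complete; the unrestricted problem is then $NP$-hard, and lies in $NP$ by the same kind of argument.

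The step I expect to be the real obstacle is keeping the reduction polynomial, i.e. guaranteeing that $m$ distinct orbits of the prescribed sizes exist without blowing up the description of $\blam$; this is exactly the role of the count of degree-$d$ irreducible polynomials over $\FF_q$, and it is what makes it convenient to let $q$ grow with $m$. (If one insists on a fixed ground field, say $q=2$, one first rescales all $a_i$ and $B$ by a common factor of order $\log m$ so that every size occurring has at least $m$ irreducible polynomials of that degree; rescaling preserves exactly which subsets sum to $B$.) The rest is a direct translation through Theorem \ref{ZelDecomp}.
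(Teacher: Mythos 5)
Your proposal is correct and follows essentially the same route as the paper: both reduce the restricted case ($\ell(\mu)=2$, all $\blam(\varphi)=(1)$) to the subset-sum/partition problem on the multiset of orbit sizes $\{|\varphi| : \varphi\in\Theta_{\blam}\}$, and both use a multitableau (equivalently, the chosen subset) as the polynomial-size certificate for membership in $NP$. The only difference is that you spell out the hardness reduction more explicitly (choosing $q$ with enough irreducible polynomials of each degree to realize the prescribed orbit sizes), a point the paper handles with the remark that orbit sizes and their multiplicities can be made arbitrary as $q$ and $n$ grow.
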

\begin{proof} To show that the general problem is $NP$, consider as a certificate a semistandard Young $\Theta$-multitableau $\bT$ of shape $\blam$ with entries $1$ through $\ell(\mu)$, which has storage size which is polynomial in $\mathrm{size}(\blam, \mu)$ (see \cite[Section 1]{PaVa10}).  If $\omega(\varphi)_i$ is the number of $i$'s in $\bT(\varphi)$, to check that $\bT$ has weight $\mu$ and thus verifying that $K_{\blam \mu} > 0$, we must check if $\sum_{\varphi \in \Theta_{\blam}} |\varphi| \omega(\varphi)_i = \mu_i$ for every $i$.  Since this can be checked in time which is polynomial in $\mathrm{size}(\blam, \mu)$, the problem is $NP$.

As remarked above, the orbits in $\Theta$ are in bijection with non-constant monic irreducible polynomials over $\FF_q$, where the cardinality of $\varphi \in \Theta$ is the degree of the corresponding polynomial.  It follows that as we increase $q$ and $n$, the sizes of the orbits in $\Theta$ can be any positive integer, and these sizes might occur with multiplicity as large as we like.  Now consider the case that $\blam(\varphi) = (1)$ for all $\varphi \in \Theta_{\blam}$ (in which case $\chi^{\blam} \in \Irr(G)$ is a \emph{regular semisimple} character), and $\ell(\mu) = 2, \mu = (\mu_1, \mu_2)$.  Then deciding whether $K_{\blam \mu} > 0$ is the same as determining whether the multiset of positive integers $\{ |\varphi| \, \mid \, \varphi \in \Theta_{\blam} \}$ has some sub-multiset which has sum $\mu_1$.  This is exactly the \emph{subset sum} problem, which is known to be $NP$-complete (see \cite[Exercise 2.17]{ArBa09}, for example).  Thus to determine whether $K_{\blam \mu} > 0$ in general is $NP$-complete.
\end{proof}

While Proposition \ref{NP} does not prevent there from being nice theoretical answers, it does indicate that the problem quickly gets more complicated than the questions we have addressed so far.  We conclude by leaving the problem of further understanding the numbers $K_{\blam \mu}$ as an open question.

\bigskip

\noindent
\begin{tabular}{ll}
\textsc{Department of Mathematics}\\ 
\textsc{College of William and Mary}\\
\textsc{P. O. Box 8795} \\
\textsc{Williamsburg, VA  23187}\\
{\em e-mail}:  {\tt jrjanopaulnayl@email.wm.edu}, {\tt vinroot@math.wm.edu}\\
\end{tabular}


\begin{thebibliography}{10}

\bibitem{ArBa09}
S. Arora and B. Barak, Computational complexity, a modern approach, Cambridge University Press, Cambridge, 2009.

\bibitem{BeZe90}
A. D. Berenshte\u{\i}n and A. V. Zelevinski\u{\i}, When is the multiplicity of a weight equal to 1?, \emph{Funct. Anal. Appl.} \textbf{24} (1990), no. 4, 259--269.

\bibitem{Fu97}
W. Fulton, Young tableaux, with applications to representation theory and geometry, London Mathematical Society Student Texts, 35, Cambridge University Press, Cambridge, 1997.

\bibitem{FuHa91}
W. Fulton and J. Harris, Representation theory, a first course, Graduate Texts in Mathematics, 129, Readings in Mathematics, Springer-Verlag, New York, 1991.

\bibitem{GaGoVi12}
Z. Gates, B. Goldman, and C. R. Vinroot, On the number of partition weights with Kostka multiplicity one, \emph{Electron. J. Combin.} \textbf{19} (2012), no. 4, Paper 52, 22 pp.

\bibitem{GeGr62}
I. M. Gel'fand and M. I. Graev, Construction of the irreducible representations of simple algebraic groups over a finite field, \emph{Dokl. Akad. Nauk SSSR} \textbf{147} (1962), 529--532.

\bibitem{Gr55}
J. A. Green, The characters of the finite general linear groups, \emph{Trans. Amer. Math. Soc.} \textbf{80} (1955), no. 2, 402--447.


\bibitem{InJiSt09}
F. Ingram, N. Jing, and E. Stitzinger, Wreath product symmetric functions, \emph{Int. J. Algebra} \textbf{3} (2009), no. 1-4, 1--19.

\bibitem{JaKe81}
G. D. James and A. Kerber, The representation theory of the symmetric group, Addison-Wesley, Reading, Mass., 1981.

\bibitem{Ma95}
I. G. Macdonald, Symmetric functions and Hall polynomials, second edition, with contributions by A. Zelevinsky, Oxford Mathematics Monographs, Oxford Science Publications, The Clarendon Press, Oxford University Press, New York, 1995.


\bibitem{Mar12}
E. Marberg, Generalized involution models for wreath products, \emph{Israel J. Math.} \textbf{192} (2012), no. 1, 157--195.

\bibitem{Na06}
H. Narayanan, On the complexity of computing Kostka numbers and Littlewood-Richardson coefficients, \emph{J. Algebraic Combin.} \textbf{24} (2006), no. 3, 347--354.

\bibitem{PaVa10}
I. Pak and E. Vallejo, Reductions of Young tableau bijections, \emph{SIAM J. Discrete Math.} \textbf{24} (2010), no. 1, 113--145.


\bibitem{Sa90}
B. Sagan, The ubiquitous Young tableau, In: Invariant theory and tableaux (Minneapolis, MN, 1988), 262--298, IMA Vol. Math. Appl., 19, Springer, New York, 1990.

\bibitem{Sp32}
W. Specht, Eine Verallgemeinerung der symmetrischen Gruppe, \emph{Schriften Math. Seminar (Berlin)} \textbf{1} (1932), 1--32.

\bibitem{Zel81}
A. V. Zelevinsky, Representations of finite classical groups, a Hopf algebra approach, Lecture Notes in Mathematics, 869, Springer-Verlag, Berlin-New York, 1981.

\end{thebibliography}
\end{document}